\documentclass[11pt,twoside,reqno]{amsart}
\usepackage{amsmath}
\usepackage{hyperref}
\usepackage{amsxtra}
\usepackage[dvipsnames, x11names]{xcolor}
\usepackage{amsopn}
\usepackage{caption}
\usepackage[T1]{fontenc}
\usepackage[protrusion=true,expansion=true]{microtype}
\usepackage[pdftex]{graphicx}
\usepackage{url}
\usepackage{tabularray}
\usepackage{comment}
\usepackage[utf8]{inputenc}
\usepackage[T1]{fontenc}
\usepackage{lmodern}
\usepackage[main=english]{babel}
\usepackage[type={CC},modifier={by-nc-sa},version={3.0}]{doclicense}
\usepackage{amsthm}
\usepackage{amssymb}
\usepackage{amsfonts}
\usepackage{stmaryrd}
\usepackage{bbold}
\usepackage{mathrsfs}
\usepackage{fancyhdr}
\usepackage{fancyref}
\usepackage{float}
\usepackage{csquotes}
\usepackage{quiver}
\usepackage{mathtools}
\usepackage[style=alphabetic, mincitenames=1, maxnames=4, giveninits, backend=bibtex]{biblatex}

\addbibresource{kanai_frame_flows.bib}
% Content Tailoring
%
% tailor content of bibliography
\AtEveryBibitem{
   \clearfield{month}
   \clearfield{series}
   \clearfield{venue}
   \clearname{editor}
   \clearlist{publisher}
   \clearlist{location} % alias to field 'address'
   \clearfield{doi}
   \clearfield{url}
   \clearfield{venue}
   \clearfield{issn}
   \clearfield{isbn}
   \clearfield{urldate}
   \clearfield{eventdate}
   %\clearfield{pages}
   %\clearfield{booktitle}
   %\clearfield{journaltitle}
   %\clearfield{number}
   %\clearfield{volume}
}

\newtheorem{numbering}{numbering}[section]
\numberwithin{numbering}{section}

\theoremstyle{plain}
\newtheorem{thm}[numbering]{Theorem}
\newtheorem{pps}[numbering]{Proposition}

\newtheorem{csq}[numbering]{Corollary}
\newtheorem{lem}[numbering]{Lemma}
\newtheorem{conj}[numbering]{Conjecture}

\theoremstyle{definition}

\newtheorem{df}[numbering]{Definition}

\theoremstyle{remark}
\newtheorem{rmq}[numbering]{Remark}

\newtheorem{ex}[numbering]{Example}

\setcounter{secnumdepth}{3}
\numberwithin{equation}{section}

\DeclareMathOperator{\identity}{id}

\DeclareMathOperator{\End}{End}

\DeclareMathOperator{\dyn}{dyn}
\DeclareMathOperator{\Ad}{Ad}
\DeclareMathOperator{\ad}{ad}
\DeclareMathOperator{\Hol}{Hol}
\DeclareMathOperator{\Or}{O}
\DeclareMathOperator{\SO}{SO}

\DeclareMathOperator{\CO}{CO}
\DeclareMathOperator{\GL}{GL}
\DeclareMathOperator{\PGL}{PGL}

\DeclareMathOperator{\U}{U}
\DeclareMathOperator{\SU}{SU}
\DeclareMathOperator{\Sp}{Sp}
\DeclareMathOperator{\Spin}{Spin}

\DeclareMathOperator{\PSU}{PSU}
\DeclareMathOperator{\PSL}{PSL}
\DeclareMathOperator{\PSO}{PSO}

\newcommand{\R}{\mathbb{R}}
\newcommand{\C}{\mathbb{C}}
\newcommand{\HH}{\mathbb{H}}

\newcommand{\blue}[1]{\textcolor{blue}{#1}}

\newcommand{\frk}[1]{\mathfrak{#1}}

\title[On Kanai's conjecture for frame flows]{On Kanai's conjecture for frame flows over negatively curved manifolds}
\author{Louis-Brahim Beaufort}

\address{Louis-Brahim Beaufort, Université Paris-Saclay,  Laboratoire de mathématiques d'Orsay, 91405, Orsay, France}

\email{louis-brahim.beaufort@math.cnrs.fr}

\subjclass[2020]{37D30, 37D40, 57R22}

\keywords{Frame flow, Smooth rigidity, Kanai conjecture, Dynamical connections, Isometric extensions of Anosov flows} 

\begin{document}

\begin{abstract}
	Let $M$ be a closed, negatively curved Riemannian manifold of dimension $n \neq 4, 8$ with strictly $1/4$-pinched sectional curvature. We prove, that if the frame flow is ergodic and the sum of its unstable and stable bundles together with its flow direction is $\mathcal{C}^2$, then $M$ is homothetic to a real hyperbolic manifold. This extends to higher dimensions a previous result of Kanai \cite{kanai1993differential} in dimension 3. The proof generalises to isometric extensions of geodesic flows to a principal bundle $P$ with compact structure group and yields the following alternative : either $P$ is flat, or $M$ is hyperbolic.
\end{abstract}

\maketitle

\section{Introduction} \label{sec:intro}

\subsection{Context}

Associated to the geodesic flow $\phi_t$ on the unitary tangent bundle $SM$ of a closed Riemannian manifold $M$ with negative sectional curvature are the so-called stable and unstable bundles $E_s, E_u$. The regularity of these bundles is an old, almost completely settled problem featuring a smooth rigidity property first remarked by Hurder and Katok \cite{hurder1990diff} over surfaces: if the stable and unstable bundles $E_s, E_u$ are $\mathcal{C}^2$, then they are automatically smooth, and the underlying manifold is locally symmetric as proved by Ghys \cite{ghys1987flots}. Kanai \cite{kanai1988geodesic} and Feres \cite{feres1991geodesic} showed in higher dimensions with a pinching assumption that if $E_s, E_u$ are smooth, then the geodesic flow $\phi_t$ is conjugate to the geodesic flow $\hat{\phi}_t$ of a locally symmetric space; at the same time Benoist-Foulon-Labourie \cite{benoist1990flots} obtained the same result for more general contact Anosov flows under a (conjecturally non-optimal) bound on the regularity of $E_s, E_u$ depending on the dimension. It then follows from a celebrated result of Besson-Courtois-Gallot \cite{besson1995entropies} that the existence of such a conjugacy implies that the underlying manifold is locally symmetric. Note that compact locally symmetric spaces of negative curvature have been classified: they are precisely compact quotients by lattices of one of the real, complex, quaternionic hyperbolic spaces or the Cayley plane.

\vspace{3mm}

One might then ask if similar results hold when considering other related geometrical flows; a natural candidate is the frame flow $\Phi_t$, defined on the frame bundle $FM$. By definition, $\Phi_t$ is the parallel transport of frames $(x, e_1, \dots, e_n)$ of tangent vectors along the geodesic directed by $v \coloneq e_1$ for a time $t$, see Section \ref{sec:ext_flow}. There is a natural $\SO(n-1)$ principal fibration $\pi: FM \to SM$ defined by $\pi(x, v, e_2, \cdots, e_n) = (x, v)$, such that $\pi \circ \Phi_t = \phi_t \circ \pi$. Note that $\Phi_t$ acts fiberwise isometrically, hence is not Anosov; however it is still \emph{partially hyperbolic} \cite{brin1974partially,hasselblatt2006partially},\cite[Ch. 12]{lefeuvre2024microlocal} in the sense that there exists a continuous decomposition of the tangent bundle of $FM$ :
$$TFM = \R X^{FM} \oplus E_s^{FM} \oplus E_u^{FM} \oplus \mathbb{V}$$
where $\mathbb{V} = \ker d\pi$ is the \emph{vertical distribution}, $X^{FM}$ is the generator of $\Phi_t$ and $E_s^{FM}$, $E_u^{FM}$ are the stable and unstable bundles of the frame flow, which uniformly contract and expand respectively under the action of $d\Phi_t$. 

The existence of this partially hyperbolic structure was first noticed by Brin and Pesin \cite{brin1974partially} and used successfully in the study of the ergodicity of frame flows \cite{brin1975extensions,brin1980ergodic,brin1982ergodic,brin1984frame,burns2003stable,cekic2024frame,cekic2024unitary}.

The associated smooth rigidity problem was first studied by Kanai in \cite{kanai1993differential}, where he conjectured the following:

\begin{conj}[Kanai] If the distribution 
$$\HH = \R X^{FM} \oplus E_s^{FM} \oplus E_u^{FM}$$
is $\mathcal{C}^1$, then $(M, g)$ is locally symmetric.
\end{conj}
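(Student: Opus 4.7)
The first observation is that the right action of $\SO(n-1)$ on $FM$ commutes with the frame flow $\Phi_t$, so it preserves $X^{FM}$, $E_s^{FM}$, $E_u^{FM}$, and hence the distribution $\HH$. Being $\SO(n-1)$-equivariant and complementary to the vertical distribution $\mathbb{V}$, the bundle $\HH$ is the horizontal distribution of a principal connection $\omega$ on $\pi\colon FM \to SM$, and the $\mathcal{C}^1$ hypothesis translates into $\omega$ being $\mathcal{C}^1$. Because $\HH$ is $\Phi_t$-invariant by construction, $\omega$ descends to an object invariant under the geodesic flow $\phi_t$ on $SM$. The natural quantity to study is then the curvature $\Omega = d\omega + \tfrac{1}{2}[\omega,\omega]$, a $\mathcal{C}^0$ two-form on $SM$ with values in $\mathfrak{so}(n-1)$, which is $\phi_t$-invariant.

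I would then exploit the Anosov structure on $SM$ to constrain $\Omega$. Since $d\phi_t$ contracts $E_s$ and expands $E_u$ exponentially, invariance forces $\Omega|_{E_s\wedge E_s}=0$ and $\Omega|_{E_u\wedge E_u}=0$, while $\iota_X\Omega=0$ because $X$ lifts horizontally to $X^{FM}\in\HH$. This is the standard rigidity of flow-invariant tensors for hyperbolic dynamics. The only surviving component of $\Omega$ lives on $E_s\wedge E_u$, and its pointwise values are tightly controlled by ergodicity and by the fact that $\Phi_t$ acts fiberwise isometrically.

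Third, I expect the proof to fork into the dichotomy announced in the abstract. If $\Omega \equiv 0$, then $\omega$ is flat and the principal bundle $FM\to SM$ is a flat isometric extension of $\phi_t$; this is the branch ``$P$ flat''. Otherwise, I would use the regularity of $\HH$ to bootstrap regularity on the base: since $d\pi|_\HH$ is a fiberwise isomorphism and each of $E_s^{FM}, E_u^{FM}$ is the unique dynamically defined subspace of $\HH$ contracted (resp.\ expanded) by $d\Phi_t$, their push-forwards $E_s, E_u$ inherit $\mathcal{C}^1$-regularity on $SM$. One then applies the Kanai-type dynamical connection techniques on $SM$ to improve regularity further, and invokes the smooth rigidity theorems of Kanai--Feres and Benoist--Foulon--Labourie, combined with the entropy rigidity of Besson--Courtois--Gallot, to conclude that $M$ is locally symmetric; the strict $1/4$-pinching and the exclusion of $n=4,8$ then rule out the complex, quaternionic and octonionic hyperbolic cases and force real hyperbolicity.

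The main obstacle, already signalled by Kanai in dimension $3$, is the bootstrapping step: upgrading the $\mathcal{C}^1$ regularity of $\HH$ on $FM$ to enough regularity on $SM$ to feed classical smooth rigidity, without losing contact with the partially hyperbolic structure. In dimension $3$ this can be carried out by hand using that $\mathfrak{so}(2)$ is abelian and Fourier analysis on the fibers; for $n\geq 4$ the noncommutativity of $\mathfrak{so}(n-1)$ obstructs that approach, which is where I expect the extra $\mathcal{C}^2$ hypothesis, the pinching, and the dimensional restriction in the main theorem to intervene decisively in order to analyse the surviving $E_s\wedge E_u$ component of $\Omega$ and control the holonomy of $\omega$.
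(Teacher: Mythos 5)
First, a point of logic: the statement you are addressing is Kanai's \emph{conjecture}, which the paper does not prove and which remains open; the paper only establishes a partial result (Theorem \ref{thm_conj_Kanai}) under strictly $1/4$-pinched curvature, the stronger regularity $\mathcal{C}^{1/\sqrt{\delta}}$, and dimension/ergodicity restrictions, with a possible integrable exception when $n=4,8$. Your text is likewise a plan rather than a proof, and its opening steps do coincide with the paper's machinery: the $\SO(n-1)$-equivariance of $\HH$ making it a principal connection invariant under the flow is Lemma \ref{lem_dyn_connection}, and the vanishing of the curvature on $\Lambda^2 E_s$, $\Lambda^2 E_u$ and in the $X$ direction is Lemma \ref{lem_curvature} and Corollary \ref{cor_restrict}. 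The dichotomy ``flat versus hyperbolic'' is also the one the paper proves.

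The genuine gap is in your non-flat branch. You claim the push-forwards $E_s,E_u$ on $SM$ ``inherit $\mathcal{C}^1$-regularity'' and that one can then ``improve regularity further'' and invoke Kanai--Feres or Benoist--Foulon--Labourie. Two problems: (a) the hypothesis concerns only the \emph{sum} $\HH$ on $FM$, and $d\pi(\HH)=\R X^{SM}\oplus E_s^{SM}\oplus E_u^{SM}=TSM$, so no regularity of the individual bundles on $SM$ follows from it (and under strict $1/4$-pinching they are $\mathcal{C}^1$ anyway); (b) even granting $\mathcal{C}^1$, this is far below what Kanai--Feres require ($C^\infty$) or the dimension-dependent bound of Benoist--Foulon--Labourie, and you give no mechanism for the bootstrap --- this upgrade is precisely the open difficulty, not a step one can defer. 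The paper's route is different and avoids any regularity bootstrap: under strict $1/4$-pinching it constructs stable/unstable dynamical holonomies (fiber bunching, Theorem \ref{thm_stable_hol}), shows the curvature morphism $F:\Ad(FM)\to\End(E_u)$ is equivariant under these holonomies (Proposition \ref{pps_equivariance_master}), combines Ambrose--Singer with the representation theory of $\SO(n-1)$, ergodicity (or, in odd dimensions, the topology of reductions of $FS^{n-1}$) to force the full holonomy group of $E_u$ into a conformal group $\CO(h_v)$, and then concludes via Kanai's theorem on continuous flow-invariant conformal structures together with Besson--Courtois--Gallot (Lemma \ref{lem_reduction}). Also, your closing remark misattributes the roles of the hypotheses: strict pinching alone excludes the complex, quaternionic and octonionic cases, while the exclusion of $n=4,8$ is needed to rule out the flat/integrable branch, since $S^3$ and $S^7$ are parallelizable.
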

Note the different regularity bound than for the geodesic flow over surfaces, which is $\mathcal{C}^2$. In fact, it should be expected in general that when considering this differentiability problem, the regularity of the bundles on a principal bundle should be lower than on an associated bundle. This is because lifting a connection from a vector bundle to the associated frame bundle uses the existence of parallel local sections, which are in general less regular than the connection itself.

Additionally, for the frame flow, Kanai considers instead the \emph{sum} of the stable and unstable bundles (together with the flow direction) instead of each bundle separately. Note that if $E_s, E_u$ are $\mathcal{C}^k$ on the frame bundle, then their projection on the unitary tangent bundle are $\mathcal{C}^k$ too. Moreover, the projection of $E_s \oplus E_u$ is always smooth for geodesic flows.

\vspace{3mm}

The problem received a positive answer in dimension 3 by the work of Kanai \cite{kanai1993differential} as well as Besson-Courtois-Gallot. Recall that for $\delta > 0$, we say that the curvature of $(M, g)$ is $\delta$-pinched if the sectional curvatures $\kappa_g$ of $M$ satisfy
$$-C \leq \kappa_g \leq -C \delta$$
for some positive constant $C$ (which may, by an homothety, be taken equal to 1) and strictly $\delta$-pinched if it is $\delta'$ pinched for some $\delta' > \delta$. Then, Kanai proved that if the curvature of $M$ is strictly $1/4$-pinched, then the distribution $\HH$ is $\mathcal{C}^2$ if and only if $M$ has constant curvature.

More precisely, Kanai shows that under these conditions, there exists a conformal structure on $E_u$ which is flow-invariant, and proves that this implies that the geodesic flow $\phi_t$ is conjugate to the geodesic flow $\hat{\phi}_t$ on a locally symmetric manifold. As a consequence, the underlying manifold is real hyperbolic by \cite{besson1995entropies}.

The main tool in studying regularity of stable/unstable bundles are dynamical connections. We distinguish here two connections. The first one, introduced by Kanai in \cite{kanai1988geodesic}, is a linear connection on $TSM$ naturally associated to the geodesic flow $\phi_t$ and played a crucial role in the resolution of the smooth rigidity problem for Anosov flows \cite{benoist1990flots}. After successfully solving the aforementioned problems, this connection also found other applications in continuous as well as discrete contexts, see for instance \cite{hamenstadt1993regularity, kalinin2013cocycles, butler2018rigidity, butler2018measurable}. 

The second one is the principal connection of $FM$ with horizontal space $\HH$. Although it has first been introduced in this form by Kanai in \cite{kanai1993differential}, the associated parallel transport had already been investigated by Brin (see \cite{brin1975extensions}) and used since then in the study of ergodicity of the frame flow \cite{brin1975extensions,brin1980ergodic,brin1982ergodic,brin1984frame,burns2003stable,cekic2024frame,cekic2024unitary}.

\subsection{Main result}

In this article, we study the differentiability of dynamical connections in a general framework, extending Kanai's foundational work. Although dynamical holonomies have been investigated since the 1970s, only a few works (such as \cite{brin1980ergodic, cekic2024frame, cekic2024unitary}) have focused on the holonomy group generated by these maps as a central object and analyzed its properties directly. This paper aims to provide a unified reference for the definition and analysis of such holonomy groups and their associated holonomy bundles under a bunching assumption. This setting, which is currently the best understood, encompasses isometric extensions of Anosov flows (see \cite{lef2023isometric}) as well as geodesic flows on strictly $1/4$-pinched manifolds.

By applying our theory to the frame flow, we prove the following:
\begin{thm} \label{thm_conj_Kanai}
Let $M$ be a Riemannian manifold of negative, $\delta$-pinched curvature for some $\delta > 1/4$ and dimension $n \geq 3$. Assume that the associated horizontal distribution $\HH$ is $\mathcal{C}^{1/\sqrt{\delta}}$.
\begin{itemize}
   \item If $n$ is odd, then $M$ is homothetic to a real hyperbolic manifold.
   \item If $n$ is even, $n \neq 4, 8$, and the frame flow of $M$ is ergodic, then $M$ is homothetic to a real hyperbolic manifold.
   \item If $n = 4$ or $8$ and the frame flow of $M$ is ergodic, then either $M$ is homothetic to a real hyperbolic manifold or $\HH$ is integrable.
\end{itemize}

\end{thm}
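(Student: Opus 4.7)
The plan is to promote the regularity hypothesis on $\HH$ into a structural statement about the principal connection $\omega$ on $\pi : FM \to SM$ whose horizontal distribution is $\HH$, and then to exhibit a dichotomy: either $\omega$ is flat (so $\HH$ is integrable), or $(M,g)$ is real hyperbolic. My first observation is that the exponent $1/\sqrt{\delta}$ is exactly the bunching threshold for the strictly $\delta$-pinched geodesic flow: it is the minimal regularity at which the stable and unstable parallel transports of $\omega$ can be composed and differentiated, so that the curvature $\Omega = d\omega + \tfrac{1}{2}[\omega,\omega]$ is well-defined on $\HH \wedge \HH$ and the dynamical-holonomy machinery developed in the earlier sections of the paper applies directly to $\omega$.

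Applying that machinery, I would argue as follows. If $\Omega \equiv 0$, then $\omega$ is flat and $\HH$ is Frobenius-integrable. Otherwise, flow-invariance of $\Omega$ together with $\SO(n-1)$-equivariance combine, via Ambrose--Singer and a Brin-group argument under ergodicity of $\Phi_t$, to force the reduced holonomy group $H \subseteq \SO(n-1)$ to act transitively on the fibre sphere $S^{n-2}$. The Montgomery--Samelson--Borel classification of closed subgroups of $\SO(n-1)$ transitive on $S^{n-2}$, combined with the fact that $S^{n-1}$ is parallelisable precisely for $n = 2, 4, 8$, then forces $H = \SO(n-1)$ in every dimension except $n = 4, 8$, where the exceptional transitive subgroups (related to $\SO(3) \curvearrowright S^2$ and $G_2 \curvearrowright S^6$) must be considered separately. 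When $H = \SO(n-1)$, the $H$-invariant inner product on the fibres descends to a flow-invariant conformal structure on the unstable bundle $E_u^{SM}$; following Kanai's three-dimensional scheme and its higher-dimensional incarnation via Benoist--Foulon--Labourie and the Kanai connection on $TSM$, this yields a smooth conjugacy between $\phi_t$ and the geodesic flow of a rank-one locally symmetric space, and the Besson--Courtois--Gallot entropy rigidity theorem then forces $(M,g)$ to be homothetic to a real hyperbolic manifold.

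To close the three bullet points, I would rule out the integrable branch where possible. In odd $n$, ergodicity of $\Phi_t$ is automatic under the pinching hypothesis by classical results in the vein of Brin--Gromov and Brin--Karcher, and integrability of $\HH$ is incompatible with ergodicity of a partially hyperbolic isometric extension; in even $n \neq 4, 8$ with ergodicity, the absence of any proper closed subgroup of $\SO(n-1)$ acting transitively on $S^{n-2}$ already excludes the flat branch; in $n = 4, 8$ the exceptional subgroups above leave the integrable alternative open, whence the stated disjunction. The main technical obstacle I expect is to carry out the entire holonomy analysis at the sharp regularity $\mathcal{C}^{1/\sqrt{\delta}}$ --- defining and differentiating the curvature, running Ambrose--Singer in a non-smooth category, identifying $H$ as a closed Lie subgroup, and then bootstrapping smoothness of the reduction --- since the composition of stable and unstable parallel transports generically loses regularity at the threshold and every estimate has to be carried out with the corresponding care.
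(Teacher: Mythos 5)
Your proposal has a genuine gap at its central step. You pass from ``the frame-flow holonomy group $H\subseteq\SO(n-1)$ is all of $\SO(n-1)$'' to ``the $H$-invariant inner product on the fibres descends to a flow-invariant conformal structure on $E_u^{SM}$'', but these live on different bundles: the Brin/dynamical holonomy of the frame flow acts on the fibres of $FM\to SM$ (equivalently on the normal bundle $\mathcal{N}$), where an invariant inner product exists trivially because the frame flow is isometric, while the rigidity statement requires a conformal structure on $E_u$ invariant under the dynamical holonomies of $d\phi_t$ --- a group of $\GL(n-1,\R)$ which contains the exponentially expanding maps $d\phi_t\vert_{E_u}$ and a priori preserves no metric. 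Nothing in your argument transfers information from the first group to the second. The paper's entire mechanism is precisely this bridge: the curvature of the dynamical connection is repackaged as a morphism $F:\Ad(FM)\to\End(E_u)$ which is equivariant for the two holonomy representations of the path monoid (Proposition \ref{pps_equivariance_master}, Corollary \ref{cor_eq_curvature}), has constant rank, and whose kernel is identified via Ambrose--Singer with the classical holonomy algebra (Corollary \ref{cor_ambrose_singer}); simplicity of $\frk{so}(n-1)$ then forces $F$ to be zero (the integrable branch) or injective, and in the injective case a representation-theoretic chain (Propositions \ref{pps_eq_orbits_top}--\ref{pps_eq_orbits_repr}, the Levi embedding of Lemma \ref{lem_levi}, and the maximality argument of Lemma \ref{lem_so_to_conf}) shows $\SO(h_v)\subset H^{E_u}\subset\CO(h_v)$, after which Lemma \ref{lem_reduction} (Kanai plus Besson--Courtois--Gallot) concludes. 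Without some substitute for this transfer your outline does not connect the hypothesis to the conclusion.

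Two secondary points are also off. First, your exclusion of the integrable branch is wrong in both justifications: integrability of $\HH$ is \emph{not} incompatible with ergodicity (flat flows can have full transitivity group, as the paper notes after Lemma \ref{lem_flat_flows}), and the Montgomery--Samelson classification of sphere-transitive subgroups is irrelevant, since nonvanishing curvature only yields a nontrivial holonomy algebra, not transitivity on $S^{n-2}$; the correct exclusion for $n\neq 4,8$ is topological --- a flat connection on $FM\to SM$ restricts to a flat connection on $FS^{n-1}$, forcing $S^{n-1}$ to be parallelizable (Lemma \ref{lem_flat_bundle}). Second, in odd dimensions the paper does not get ergodicity for $n=7$ from the pinching; that case is handled separately via Leonard's theorem on reductions of the frame bundle of the even-dimensional sphere $S^{n-1}$ (Theorem \ref{thm_leo_odd}), which constrains the maximal compact subgroup of $H^{E_u}$ (and $H^{FM}$ to $\SO(6)$, $\U(3)$ or $\SU(3)$), a tool absent from your outline.
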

The $\mathcal{C}^{1/\sqrt{\delta}}$ regularity approaches, as $\delta \to 1$, the conjectural regularity $\mathcal{C}^1$. 

In dimension $4, 8$ we expect that $\HH$ is not integrable. However, we cannot prove this at the moment. Indeed, to show that $\HH$ is not integrable in other dimensions, we restrict the bundle $FM \to SM$ to a single sphere and we use that the frame bundle $FS^{n-1} \to S^{n-1}$ over the sphere is not a flat bundle except when $n = 2, 4, 8$. However, $S^3, S^7$ are parallelizable, which prevents us from concluding in these cases.

Note that ergodicity of the frame flow always holds in odd dimensions different from 7 \cite{brin1980ergodic}, and is known in even dimension and dimension 7 under some pinching. The best pinching currently known is obtained in \cite{cekic2024frame}; for $n = 2 \pmod{4}$, ergodicity holds for $\delta$-pinching with $\delta \sim 0.28$, and for $n = 0 \pmod{4}$, for $\delta \sim 0.56$. Remark that in Theorem \ref{thm_conj_Kanai}, we do not require ergodicity of the frame flow for $n = 7$.

In dimension 3, Theorem \ref{thm_conj_Kanai} only improves the regularity in Kanai's result. Moreover, it is not so easy to adapt our formalism to this case (more precisely, the issue here is that the Lie algebra $\frk{so}(2)$ is Abelian). Thus, instead of writing a full proof, we only explain how one can obtain it from other results. 

We give in the following two generalizations of Theorem \ref{thm_conj_Kanai} to more general contexts, which both include the frame flow on 3-manifolds as a special case and imply the desired result. We say more about this below. 

One should also be able to directly generalize the original proof of Kanai in dimension 3 by using Hamenstädt's results \cite{hamenstadt1995invariant} on \emph{continuous} flow-invariant 2-forms, which strengthen Kanai's which only hold for $\mathcal{C}^1$ regular 2-forms. 

Alternatively, one could use analysis and adapt \cite{bonthonneau2021RadialSourceEstimates} to partially hyperbolic flows in order to show that if the distribution $\mathbb{H}$ is $\mathcal{C}^{1/\sqrt{\delta}}$, then it is automatically smooth; then apply Kanai's original proof to conclude.

\subsection{Generalization}

Using similar methods, we are able to generalize Theorem \ref{thm_conj_Kanai} to a larger class of flows. We are interested in flows $\Phi^P_t$ on a principal bundle $P \to SM$ with compact structure group $G$ such that $\Phi_t^P$ commutes with the right action of $G$, acts isometrically between fibers and projects to $\phi_t$ on the base $SM$ (see Section \ref{sec:ext_flow}).

Our first result hinges on irreducibility assumptions on both the structure group and the flow:

\begin{thm} \label{thm_conj_Kanai_simple}
Assume that $M$ has negative, $\delta$-pinched curvature for some $\delta > 1/4$, $\Phi^P_t$ is \emph{ergodic} and the structure group $G$ is a \emph{simple} Lie group of dimension at least $(n-1)(n-2)/2$. 

If the Kanai horizontal distribution is $\mathcal{C}^{1/\sqrt{\delta}}$, then either the Kanai horizontal distribution is integrable or $M$ is homothetic to a real hyperbolic manifold.
\end{thm}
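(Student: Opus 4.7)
The plan is to analyze the curvature $\Omega$ of the principal connection $\omega$ on $P \to SM$ whose horizontal space is the hypothesized $\HH$. By the regularity assumption $\HH \in \mathcal{C}^{1/\sqrt{\delta}}$ with $\delta > 1/4$, the connection form $\omega$ has regularity $\mathcal{C}^{1/\sqrt{\delta}}$ with $1/\sqrt{\delta} \in (1,2)$, so the structure equation $\Omega = d\omega + \tfrac{1}{2}[\omega \wedge \omega]$ defines a Hölder-continuous $\frk{g}$-valued 2-form on $P$; since $\HH$ is preserved by $\Phi_t^P$, we have $(\Phi_t^P)^*\Omega = \Omega$.

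The first step is to localise $\Omega$ to the mixed stable/unstable pairing. Horizontality of principal-bundle curvatures kills $\iota_{\mathbb{V}}\Omega$, and flow invariance of $\omega$ kills $\iota_{X^P}\Omega$. For $u, u' \in E_u^P$, the identity $\Omega(u,u') = \Omega(d\Phi_{-t}^P d\Phi_t^P u,\, d\Phi_{-t}^P d\Phi_t^P u')$ combined with the uniform boundedness of $\Omega$ and the exponential expansion of $E_u^P$ forces $\Omega|_{\Lambda^2 E_u^P} \equiv 0$; symmetrically $\Omega|_{\Lambda^2 E_s^P} \equiv 0$. Hence $\Omega$ is encoded by the single pairing $E_s^P \otimes E_u^P \to \frk{g}$.

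If $\Omega \equiv 0$, the connection is flat and Frobenius gives integrability of $\HH$, yielding the first alternative. Otherwise, I would invoke the dynamical holonomy machinery developed in the earlier sections of the paper: the image of $\Omega$ generates a subalgebra $\frk{h} \subset \frk{g}$ which is $\Ad$-invariant under the transverse holonomy and is propagated across $P$ by the flow. Ergodicity of $\Phi_t^P$ together with simplicity of $G$ upgrade $\frk{h}$ to all of $\frk{g}$, so the dynamical holonomy is dense in $G$. Restricting $P$ to a unit sphere fibre $S_xM \simeq S^{n-1}$ then produces a $G$-bundle over $S^{n-1}$ carrying a genuinely curved connection, and the dimension bound $\dim G \geq (n-1)(n-2)/2 = \dim \SO(n-1)$ is precisely what ensures that this datum rigidifies the metric on $S_xM$, in the spirit of Kanai's original three-dimensional argument. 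Transferring the rigidity back to $SM$, the non-trivial $\Omega$ produces a flow-invariant conformal-type structure on $E_u^{SM}$; following Kanai--Feres and Benoist--Foulon--Labourie this gives a smooth orbit equivalence between $\phi_t$ and the geodesic flow of a locally symmetric target, and Besson--Courtois--Gallot \cite{besson1995entropies} then force $M$ to be real hyperbolic.

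The main obstacle is this last geometric step: converting the algebraic non-triviality of $\Omega$ (controlled by $G$-representation theory) into a genuine Riemannian rigidity statement on the sphere fibres of $SM \to M$. The simplicity of $G$ and the dimension bound are calibrated so that the residual image of $\Omega$ acts richly enough on each $S^{n-1}$ to pin down its metric; making this passage clean, while ruling out exotic transitive $G$-actions on spheres that could produce a non-round sphere fibre, is the delicate part and is precisely what produces the stated clean dichotomy.
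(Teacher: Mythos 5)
Your opening moves match the paper: the curvature of the dynamical connection is well defined and flow-invariant, it vanishes on $\Lambda^2 E_s^P$ and $\Lambda^2 E_u^P$ so only the mixed pairing survives (Lemma \ref{lem_curvature}, Corollary \ref{cor_restrict}), and the vanishing case gives the flat/integrable alternative (Lemma \ref{lem_flat_flows}). But the heart of the theorem is missing: you never explain how a nonzero curvature constrains the dynamical holonomy group $H^{E_u}=\overline{\Hol}^{\dyn}(E_u,d\phi_t,v)$ of the \emph{geodesic} flow acting on $E_u$, which is what is needed to produce the invariant conformal structure (Lemma \ref{lem_reduction}). The paper's mechanism is: (i) build stable/unstable holonomies for $d\phi_t$ on $E_u$ from the fiber bunching given by strict $1/4$-pinching (Lemma \ref{lem_diff_geod}, Theorem \ref{thm_stable_hol}); (ii) rewrite the curvature, via $d\lambda$ and an invariant metric on $\Ad(P)$, as a bundle map $F:\Ad(P)\to\End(E_u)$ and prove it intertwines the two path-monoid holonomy representations (Proposition \ref{pps_equivariance_master}) --- this is exactly where the $\mathcal{C}^{1/\sqrt{\delta}}$ threshold is consumed, a point absent from your write-up; (iii) use simplicity of $\frk{g}$ (irreducibility of the adjoint representation) to get the dichotomy $F\equiv 0$ or $F$ fiberwise injective (Corollary \ref{cor_rank}), and in the injective case use Ambrose--Singer (Corollary \ref{cor_ambrose_singer}), the quotient isomorphism of Proposition \ref{pps_synth_eq} and the Levi decomposition (Lemma \ref{lem_levi}) to embed a compact copy of $\frk{g}$ into $\frk{h}^{E_u}$; the dimension bound $\dim G\ge (n-1)(n-2)/2$ then forces the corresponding connected subgroup to be the full $\SO(h_v)$ of some metric $h_v$ on $E_u\vert_v$, and Lemma \ref{lem_so_to_conf}, applied because $H^{E_u}$ preserves the proper subspace $E=\operatorname{ran}F_v$, pins $H^{E_u}$ inside $\CO(h_v)$; finally Lemma \ref{lem_reduction} together with \cite{kanai1993differential} and \cite{besson1995entropies} concludes.

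Your substitute for this --- restricting $P$ to a sphere fibre $S_xM$ and asserting that a curved $G$-connection there "rigidifies the metric" on the fibre --- is not an argument and is not the tool used for this theorem: for a general isometric extension $P$ there is no metric on the fibre to rigidify, and the sphere-restriction device in the paper is purely topological (Leonard's classification of reductions of $FS^{n-1}$, Theorem \ref{thm_leo_odd}) and belongs to the odd-dimensional Theorem \ref{thm_conj_Kanai_odd}, not to the simple/ergodic case. You also conflate two roles: ergodicity alone already gives $\overline{\Hol}^{\dyn}(P)=G$ (Theorem \ref{thm_ergod_frame}); what simplicity actually buys is the $0$-or-injective dichotomy for $F$, i.e.\ the alternative in the statement, not density of holonomy. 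Finally, the closing citation route via Kanai--Feres and Benoist--Foulon--Labourie requires smooth or high-regularity invariant bundles that you do not have; the correct endgame is Kanai's theorem on flow-invariant \emph{continuous} conformal structures plus Besson--Courtois--Gallot. In short, the step you yourself flag as "the delicate part" --- converting nonzero curvature into an invariant conformal structure on $E_u$ --- is precisely the content of Sections \ref{sec:curvature}--\ref{sec:generalizations} and is not supplied, so the proposal has a genuine gap.
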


Note that this result cannot be applied to the frame flow on 3 and 5-dimensional manifolds, as the structure groups $\SO(2), \SO(4)$ are not simple. 

We are also able to generalize this result to extensions of arbitrary Anosov flows $\varphi_t$ by connecting our differentiability problem for extensions of $\varphi_t$ to the problem of differentiability for the stable and unstable bundles of $\varphi_t$ itself. See Theorem \ref{thm_general_simple} for details.

\smallskip

Our second result is based on topological properties of even-dimensional spheres. Recall that a principal bundle is said to be \emph{flat} if it carries a flat connection.
\begin{thm} \label{thm_conj_Kanai_odd}
Assume that $M$ has odd dimension and has negative, $\delta$-pinched curvature for some $\delta < 1/4$. If the  distribution $\HH$ is $\mathcal{C}^{1/\sqrt{\delta}}$, then either $M$ has constant curvature or the bundle $P \to SM$ is flat. 

More specifically, in this latter case, the flow $\Phi_t$ may be described explicitly as a "homothetical curvature flow" (see Section \ref{sec:exp_flow}).
\end{thm}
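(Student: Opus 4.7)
The plan is to follow the same dichotomy strategy as in Theorem \ref{thm_conj_Kanai_simple}: either the curvature of the principal connection $\HH$ vanishes identically, in which case $P \to SM$ is a flat bundle, or else it does not, and we then exploit the odd-dimensionality of $M$ (rather than simplicity of $G$) to force constant sectional curvature.

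As a first step, I would apply the theory of dynamical connections developed earlier in the paper to upgrade $\HH$, under the regularity assumption $\mathcal{C}^{1/\sqrt{\delta}}$ and the bunching supplied by the pinching, into the horizontal distribution of a $\mathcal{C}^{1/\sqrt{\delta}}$ principal connection $\omega$ on $P \to SM$ that is invariant under the extended flow $\Phi_t^P$. Denote by $\Omega$ its curvature $2$-form. If $\Omega \equiv 0$, then $\omega$ is flat, so $P \to SM$ is flat as a bundle; the parallel transport by $\omega$ along geodesics then provides the explicit homothetical-curvature-flow description of $\Phi_t$ alluded to in Section \ref{sec:exp_flow}, and we are done.

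Suppose instead that $\Omega \not\equiv 0$. The main task is to show that $M$ must then have constant sectional curvature. I would restrict the bundle $P \to SM$ to a single sphere fiber $S_xM \simeq S^{n-1}$ for $x \in M$: since $n$ is odd, the sphere $S^{n-1}$ is even-dimensional with $\chi(S^{n-1}) = 2$, and simply connected as $n - 1 \geq 2$, so in particular any flat $G$-bundle over $S^{n-1}$ is trivial. Flow-invariance of $\Omega$, coupled with density of unstable orbits and equivariance under $G$, should translate into algebraic constraints tying $\Omega$ to the Riemann curvature tensor $R^g$ of $M$. Analyzing $\Omega|_{S_xM}$ as a $G$-equivariant horizontal $2$-form on $P|_{S_xM}$, and using the non-vanishing Euler class of $TS^{n-1}$ to force induced sections to have common zeros, the non-vanishing of $\Omega$ is incompatible with $R^g$ taking anything other than the round form $R^g(u,v)w = K\bigl(g(v,w)u - g(u,w)v\bigr)$, so $M$ is hyperbolic.

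The chief obstacle is this final step: unlike the simple structure-group case, we cannot rely on irreducibility of the adjoint representation of $\mathfrak{g}$ to eliminate terms. Instead, the argument must blend the topological rigidity of the even-dimensional sphere with a careful book-keeping of how $\Omega$, via the Jacobi equation along geodesics, encodes $R^g$. The delicate point will therefore be verifying that flow-invariance together with non-flatness of $\omega$ is strong enough --- in odd dimension only --- to select the round curvature tensor, and that this selection is compatible with the arbitrary (possibly non-simple, non-semisimple) structure group $G$.
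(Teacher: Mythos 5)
Your central dichotomy is not the one the theorem needs, and it leads to a false claim. The alternative in the statement is not ``dynamical curvature zero versus nonzero'': the homothetical curvature flows of Section \ref{sec:exp_flow} (for instance $\det d\phi_t$ acting on the density bundle of $E_u$, or any circle-bundle extension whose dynamical curvature is $a\,d\lambda$ with $a \neq 0$) have $\Omega \not\equiv 0$, yet the underlying bundle $P \to SM$ is flat (Lemma \ref{lem_central_flat}) and the base is an arbitrary pinched negatively curved manifold, by no means of constant curvature. These flows are exactly the exceptional case the theorem allows for --- they are why the conclusion reads ``the flow $\Phi_t$ may be described explicitly as a homothetical curvature flow'' rather than ``$\HH$ is integrable'' --- and your argument never isolates them: your claim that $\Omega \not\equiv 0$ forces $M$ hyperbolic is therefore false, and your flat case $\Omega \equiv 0$ only captures the degenerate homothetical flows with $a = 0$.

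Second, the step that is supposed to yield constant curvature is not a proof. The actual mechanism is to show that the full dynamical holonomy group $H^{E_u} = \overline{\Hol}^{\dyn}(E_u, d\phi_t, v)$ preserves a conformal class on $E_u\vert_v$ and then invoke Lemma \ref{lem_reduction} (Kanai plus Besson--Courtois--Gallot). Odd dimension enters through the holonomy bundle of $d\phi_t$ restricted to a sphere fiber $S_x M \cong S^{n-1}$: by Theorem \ref{thm_leo_odd} the maximal compact subgroup $K$ of $H^{E_u}$ must be $\SO(n-1)$ (or $\SU(3)$, $\U(3)$ when $n = 7$), which is a statement about reductions of the structure group of the frame bundle of the even sphere, not about the Euler class or common zeros of sections. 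One then needs the equivariant correspondence of Proposition \ref{pps_synth_eq}, identifying $\frk{h}^P / C_{\frk{h}^P}(\frk{d}^P)$ with $\frk{h}^{E_u} / C_{\frk{h}^{E_u}}(E)$ through the curvature morphism $F$; it is precisely the case where this semisimple quotient $\frk{l}$ is trivial and $E$ sits inside the homotheties that produces the homothetical-curvature alternative, while the other cases are pushed, via Lemmas \ref{lem_norm_so} and \ref{lem_so_to_conf}, to $H^{E_u} \subset \CO(n-1)$ and hence to a flow-invariant conformal structure. Your sketch ``flow-invariance plus the non-vanishing Euler class of $TS^{n-1}$ selects the round curvature tensor'' engages with none of this; as written there is no route from it to an invariant conformal structure on $E_u$, and in the stated generality it cannot be repaired because of the counterexamples above.
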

It should be noted that examples of such flows with $P$ flat but nonintegrable Kanai horizontal distribution do exist, see Section \ref{sec:exp_flow}.

Theorem \ref{thm_conj_Kanai} can be obtained as a consequence of the second and third results when the frame bundle $FM$ can be proved to be non-flat as a bundle over $SM$ as explained above.

\subsection{Strategy of proof} 

Denote $SM$ the unitary tangent bundle of $M$, $\phi_t$ the geodesic flow, $E_s, E_u \subset TSM$ the stable and unstable bundles of the geodesic flow. Under the regularity assumption on $\mathbb{H}$, we will prove that there is a $\phi_t$-invariant conformal structure on $E_u$, which implies, by \cite{kanai1993differential} and \cite{besson1995entropies}, that $M$ has constant curvature. To do this, we study dynamical parallel transport on $E_u$ and more precisely the associated holonomy group $H$ acting on a fiber $E_u \vert_v$. We show that $H$ preserves a conformal structure on $E_u \vert_v$, which then extends to every fiber of $E_u$ by parallel transport.

This is done as follows. If the dynamical connection on $FM$ associated to the frame flow is $\mathcal{C}^1$, then it admits a well-defined curvature. One can identify this tensor to a morphism $F$ of vector bundles between the adjoint bundle $\Ad(P)$ and the bundle $\End(E_u)$. Moreover, this curvature is invariant under a natural flow obtained from the frame flow. Using this property, we show that the curvature is parallel in some sense; more precisely it is invariant under a parallel transport associated to dynamical connections on $\Ad(P)$ and $\End(E_u)$. We leverage this equivariance property to show that the holonomy group $H$ must contain the orthogonal group associated to some metric $h$ on $E_u$. Finally, by using the representation theory of $\SO(n-1)$, we are able to conclude that $H$ must preserve the conformal structure associated to $h$. 

\subsection{Possible generalizations} 

As explained above, a key element in our proof is the existence of holonomies acting on the stable and unstable bundles. Such holonomies have been constructed by Kanai \cite{kanai1988geodesic,kanai1993differential} when the stable bundle is $\mathcal{C}^1$, including strictly $1/4$-pinched manifolds and locally symmetric manifolds. Other methods have been devised under bunching assumptions on the flow \cite{viana2008cocycles, kalinin2013cocycles, sadovskaya2014cohomology}, giving another proof of existence for geodesic flows on strictly $1/4$-pinched manifolds (but not beyond). However, as of now, we remain unaware of any other criterion enabling the existence of such holonomies. 

Notably, the pinching assumptions exclude Kähler and quaternion-Kähler manifolds, which possess their own specialized frame flows (acting on unitary and quaternionic frames, respectively). It has been observed that dynamical holonomies must be adapted to the geometric structure of the underlying manifold; see \cite{benoist1990flots,feres1991geodesic}. Our approach to constructing dynamical holonomies, which currently requires strict $1/4$-pinching of the curvature, thus implicitly leverages the favorable properties present in the real hyperbolic case. This indicates that a completely new construction of dynamical holonomies may be necessary to fully resolve Kanai's conjecture and its analogues for special frame flows.

Nevertheless, our methods could likely be extended to address generalizations of Theorem \ref{thm_conj_Kanai_odd} in even dimensions, under strict 1/4 pinching. Importantly, our arguments do not depend on ergodicity assumptions for the flow, suggesting that a proof of Theorem \ref{thm_conj_Kanai} without requiring ergodicity is within reach.

This approach relies on the topological properties of spheres to compute explicitly which groups can appear as holonomy groups for our various holonomies. More precisely, reductions of the frame bundle over the sphere $S^{n-1}$ have strong constraints on their structure group, which get looser as the 2-adic valuation of $n$ increases, peaking with $S^3, S^7$ which are parallelizable. We estimate that our methods are tractable when $n \neq 0 \pmod{4}$, and with much effort maybe $n \neq 0 \pmod{8}$, but a complete study of the octonionic hyperbolic plane $\HH^2(\mathbb{O})$ seems already out of reach without a computer. We also note that the study of the reductions of frame bundle over the sphere has only been completely solved in dimensions not divisible by 4 (except for a finite number of exceptions) \cite{leonard1971G,ozaki1991g}, and progress may also be needed in this direction.

\subsection{Organization of the paper}

We gather in Section \ref{sec:background} some facts and definitions about the geodesic flow, representation theory and algebraic topology on the spheres. Section \ref{sec:dyn_connections} is dedicated to dynamical connections. We first construct stable holonomies in the general context of fiber bunched extensions of Anosov flows. In the more specific context of isometric extensions, holonomies actually come from a connection. In Section \ref{sec:curvature}, we study the associated curvature when it is defined. In Section \ref{sec:proof_kanai} we apply this theory, ultimately proving Theorem \ref{thm_conj_Kanai}. Finally, we generalize the proof in several ways in Section \ref{sec:generalizations}.

\bigskip

\paragraph{\textbf{Conventions and notations}} \ 

If $k = m + \alpha \in \R$ with $m \in \mathbb{N}$ and $\alpha \in [0, 1)$, we denote $\mathcal{C}^k$ or $\mathcal{C}^{m, \alpha}$ the class of $m$ times differentiable functions with $\alpha$-Hölder continuous derivatives of order $m$.

Given a metric $g$ on an Euclidean space, we denote $\SO(g)$ the corresponding special orthogonal group and $\CO(g) \coloneq \mathbb{R}^* \SO(g)$ the corresponding conformal group. If $g$ is the standard metric on $\R^n$, we simply write $\SO(n)$ and $\CO(n)$ instead of $\SO(g)$ and $\CO(g)$. Similarly, we denote $\U(n), \SU(n)$ the unitary and special unitary groups associated to the standard metric and complex structure on $\R^{2n}$. 

Given a matrix group $H \subset \GL$, we denote $PH$ the image of $H$ in the projective group $\PGL$.

When referring to the ergodicity of an isometric, compact extension of a measure-preserving Anosov flow, we shall always consider ergodicity with respect to the measure obtained as the product of the preserved measure on the base and the Haar measure on the fibers. Equivalently, one can replace ergodicity by transitivity, as these are known to be equivalent.

\bigskip

\paragraph{\textbf{Acknowledgements}} \ This work is part of the author's PhD thesis. The author wishes to thank Thibault Lefeuvre and Andrei Moroianu for suggesting the problem and their constant support during the elaboration of this article.

This research was supported by the European Research Council (ERC) under the European Union's Horizon 2020 research and innovation programme (Grant agreement no. 101162990 -- ADG). 

\section{Background} \label{sec:background}

\subsection{Geodesic flows in negative curvature} \label{sec:geod_flow}

We refer the reader to \cite{paternain1999geodesic} or \cite[Ch. 13]{lefeuvre2024microlocal} for an introduction to geodesic flows and Anosov geodesic flows. Let $(M, g)$ be a closed oriented Riemannian manifold of dimension $n$. Denote $SM$ the unit tangent bundle of $M$, and $\phi_t$ the geodesic flow of $g$ over $SM$. 

We say that $\phi_t$ is an Anosov flow if there exist a Riemannian metric and a continuous decomposition 
$$TSM = \R X^{SM} \oplus E_u^{SM} \oplus E_s^{SM}$$
where $X^{SM}$ is the generator of $\phi_t$, $E_u^{SM}$ and $E_s^{SM}$ are the unstable and stable bundles of the geodesic flow, satisfying the following property: there exist positive constants $C, c > 0$ such that 
$$\forall t > 0, \Vert d\phi_t \vert_{E_s^{SM}} \Vert \leq C e^{- ct}, \quad \Vert d\phi_{-t} \vert_{E_u^{SM}} \Vert \leq C e^{- c t}.$$
As $M$ is closed, this definition does not depend on the choice of a metric.

The celebrated stable/unstable manifolds theorem then asserts that the bundles $E_s^{SM}, E_u^{SM}$ integrate to continuous foliations with smooth leaves. In the following, if $v \in N$, we shall denote $W^s(v)$ the stable leaf of $v$ and $W^u(v)$ the unstable leaf of $v$.

It is well known that when the sectional curvatures of $M$ are negative, $\phi_t$ is Anosov. Moreover, $\phi_t$ preserves a contact 1-form $\lambda$ (the Liouville form) characterized by $\lambda(X^{SM}) = 1, \ker \lambda = E_s^{SM} \oplus E_u^{SM}$. This means $\lambda \wedge d\lambda^{n-1}$ is a volume form, in particular the restriction of $d\lambda$ to $E_s \times E_u$ defines a nondegenerate pairing on each pair of fibers.

The Levi-Civita connection associated to $g$ is denoted $\nabla^{LC}$. There is an associated decomposition in the tangent bundle of $SM$:
$$TSM = \R X^{SM} \oplus H \oplus V$$
where $V$ is the kernel of the footpoint projection $p: SM \to M$ and $H$ is the horizontal distribution, defined through the parallel transport of $\nabla^{LC}$. (Note that under the usual convention, the horizontal space should be $\R X \oplus H$, however we shall instead call $\R X \oplus H$ the \emph{total horizontal space}.) The projection $dp$ restricts to an isomorphism $\R X_v \oplus H_v \to T_{p(v)}M$ between fibers. By definition, $H_v$ projects to $v^\perp \subset T_{p(v)}M$ under through $dp$. 

Consider now the projection $p: SM \to M$. The tangent bundle $TM \to M$ pulls back to $p^* TM \to SM$. This bundle has a tautological section $s(x, v) = v$. The \emph{normal bundle} $\mathcal{N}$ of $SM$ is then defined by
$$(p^* TM)_{(x, v)} = \R s(x, v) \overset{\perp}{\oplus} \mathcal{N}_{(x, v)}.$$
Over each sphere $S_xM \cong S^{n-1}$, $\mathcal{N}$ is naturally identified to the tangent bundle $TS^{n-1}$.

It is a famous theorem of Klingenberg \cite{klingenberg1974anosov} that when the geodesic flow is Anosov, then $g$ has no conjugate points and moreover 
$$E_u^{SM} \cap V = \{0\} = E_s^{SM} \cap V. \label{eq:klingenberg}$$
As a result, the bundles $E_{s, u}^{SM}$ are naturally isomorphic to $\mathcal{N}$ through the projection $dp$. 

\smallskip 

There is also an identification of $V$ with $\mathcal{N}$, given by the so-called connection map $\mathcal{K}$ defined as follows. For $(x, v) \in SM$ and $\xi \in T_{(x, v)} SM$, let $z(t) = (x(t), v(t))$ be a germ of curve at $(x, v)$ on $SM$ such that $x'(0) \neq 0$, $z'(0) = \xi$. Then, we define
$$\mathcal{K}_{(x, v)} \xi = \nabla^{LC}_{x'(t)} v(t) \vert_{t = 0} \in \mathcal{N}_{(x, v)}.$$
One checks that $K$ is linear in $v, \xi$ and $\ker \mathcal{K}_{(x, v)} = \R X_{(x, v)} \oplus H_{(x, v)}$ so that $\mathcal{K}$ induces a bundle isomorphism between $V$ and $\mathcal{N}$.

This lets us define a natural metric on $TSM$ called the Sasaki metric $g_{Sas}$:
$$g_{Sas}(\xi, \eta) = g(dp(\xi), dp(\eta)) + g(\mathcal{K}(\xi), \mathcal{K}(\eta))$$
with the property that $X, H$ and $V$ are orthogonal subspaces.

The horizontal and vertical subspaces give us convenient coordinate systems to work with the geodesic flow. More precisely, consider the action $\xi(t) = d\phi_t(x, v) \xi$ for some $(x, v, \xi) \in TSM$. If $\xi$ is orthogonal to $X_{(x, v)}$, then one checks that $\xi(t)$ remains orthogonal to $X_{\phi_t(x, v)}$ and writes 
$$\xi(t) = (J(t), \nabla_{\gamma'(t)}^{LC} J(t)) \in (\mathcal{N} \oplus \mathcal{N})_{(x, v)} \cong (H \oplus V)_{(x, v)}.$$
We say that $J$ is a \emph{Jacobi field}. It is a vector field along the geodesic $\gamma$ generated by $(x, v)$. Jacobi fields satisfy the Jacobi equation 
$$\left (\nabla_{\gamma'(t)}^{LC} \right )^2 J(t) + R_{\gamma(t)}(J(t), \gamma'(t)) \gamma'(t) = 0 \label{eq:jacobi}$$
where $R$ is the Riemann curvature tensor associated to the metric $g$.

\subsection{Extensions of flows} \label{sec:ext_flow}

Let $\varphi_t$ be a flow on a compact manifold $N$. An \emph{extension} of $\varphi_t$ (or cocycle over $\varphi_t$) is a flow $\Phi_t$ on a smooth bundle $\pi: F \to N$ such that $\pi \circ \Phi_t = \varphi_t \circ \pi$ and $\Phi_t: P_v \to P_{\phi_t v}$ is an isomorphism of fibers (in the relevant category). For instance, we may talk about linear, isometric or principal extensions if $P$ is a vector bundle, Riemannian bundle or principal bundle. More explicitly, if $F = P$ is a principal bundle, we say that $\Phi$ is a \emph{principal extension} if moreover $\Phi_t$ commutes with the right action $R_g$ of the structure group $G$ of $P$:
$$R_g \circ \Phi_t = \Phi_t \circ R_g \quad \forall g \in G.$$

For example, $d\varphi_t$ is a linear extension of $\varphi_t$ over $TN$. 

Note that most of the time, we can associate to any extension a principal extension. For example, assume that $\Phi_t$ is a linear extension of $\varphi_t$ over some vector bundle $V$. Then, the associated principal bundle is the frame bundle $\GL(V)$ of $V$ and $\Phi_t$ acts on vectors of the frames componentwise. As a result, we will mostly be interested in principal extensions.

Conversely, a principal extension $\Phi_t$ will induce extensions of $\varphi_t$ over any associated bundle. Recall that the associated bundle to a principal $G$-bundle $P$ and a representation $\rho: G \to \GL(V)$ is the vector bundle $E = P \times_\rho V = P \times V / \sim$ where $(p \cdot g, v) \sim (p, \rho(g)v) \forall g \in G, p\in P, v \in V$. Then a flow $\Phi_t^P$ on $P$ will induce the flow 
$$\Phi_t^\rho [u, \xi] \coloneq [\Phi_t^P u, \xi]$$
on $E$. The flow is well-defined precisely because $\Phi_t^P$ commutes with the right action of $G$.

We now give some examples. 

\begin{ex}[Frame flows] \label{ex:frame_def}
Let $E$ be a vector bundle over $N$ with a connection $\nabla^E$. Then we can associate to it a \emph{frame flow} in the following way: for $v \in N$, consider the parallel transport map $\Phi^E_t(u)= \tau_{v \to \varphi_t(v)} u$ of some $u \in E_v$ along the flowline $t \mapsto \varphi_t(v)$. This defines a flow $\Phi_t^E$ on $E$ which extends to $\GL(E)$. Moreover, if $\nabla^E$ preserves a Euclidean metric $g$, then $\Phi^E$ is isometric, so also extends to the orthonormal frame bundle $F(E, g)$. If $E$ is a complex vector bundle and $\nabla^E$ preserves a Hermitian metric $h$, then $\Phi^E$ is unitary and extends to the unitary frame bundle $F(E, h)$.

More specifically, an example of such a vector bundle is given by the normal bundle $\mathcal{N}$ of the unitary tangent bundle $SM$, equipped with the pullback of the Levi-Civita connection. In this case, the bundle $F\mathcal{N}$ may be identified with the frame bundle $FM$ of $M$ itself in the following way : any $(x, v) \in SM$ and $(e_2, \dots, e_n) \in \SO(\mathcal{N})_{(x, v)}$ give an orthonormal frame $(v, e_2, \dots, e_n)$ of $T_x M$. We will call $\Phi_t^{F\mathcal{N}}$ the frame flow on $M$.
\end{ex}

Recall that given a principal bundle $P \to N$ with structure group $G$ and $H \subset G$ a subgroup, a \emph{reduction of structure group} from $G$ to $H$ is a principal subbundle $Q \subset P$ with structure group $H$ such that the fibers of $Q$ are invariant under the right action of $H$. If $\Phi_t^P$ is a principal extension of $\varphi_t$ and $Q$ is $\Phi_t^P$-invariant, then $\Phi_t^P \vert_Q$ defines a principal extension of $\varphi_t$ on $Q$ with structure group $H$.

Conversely, if $\Phi_t^Q$ is a principal extension of $\varphi_t$ on $Q$, then we can \emph{uniquely} extend it to $P$ by defining $\Phi_t^P(u) = \Phi_t^Q(u)$ for $u \in Q$ and then extending it by translation on the right by $G$.

\subsection{Reductions of the frame bundle of the spheres} \label{sec:g_struct}

Principal bundles on CW-complexes can be classified up to homotopy using tools from algebraic topology. In general, one has to investigate the classifying space associated to the structure group of the principal bundle. On spheres (and more generally on suspensions) the problem can be stated in a simpler way because a principal bundle on the sphere $S^m$ is described by its so-called \emph{clutching map} $\tau: S^{m-1} \to G$ (or more precisely by the homotopy class of this mapping) which specifies how two trivial bundles on each hemisphere can be glued, up to homotopy.

The more geometrical question of finding whether a principal $G$-bundle admits a reduction to a subgroup $H$ can then be slightly generalized in this topological context.
\begin{df}
Let $P \to S^m$ be a continuous principal b$G$-undle over $S^m$, where $G$ is a Lie group. We say that $P$ admits a reduction of its structure group to $(H, \rho)$, where $H$ is a Lie group and $\rho : H \to G$ is a homomorphism, if the clutching map $\tau: S^{m-1} \to G$ can be factored (up to homotopy) through the map $\rho$, that is, there exists a clutching map $\tau_0$ such that the following diagram commutes up to homotopy:
% https://q.uiver.app/#q=WzAsMyxbMCwxLCJTXntuLTF9Il0sWzEsMCwiSCJdLFsxLDEsIkciXSxbMCwyLCJcXHRhdSIsMl0sWzAsMSwiXFx0YXVfMCJdLFsxLDIsIlxccmhvIl1d
\[\begin{tikzcd}
	& H \\
	{S^{m-1}} & G
	\arrow["\rho", from=1-2, to=2-2]
	\arrow["{\tau_0}", from=2-1, to=1-2]
	\arrow["\tau"', from=2-1, to=2-2]
\end{tikzcd}\]
\end{df}

In particular, if $H$ is a closed subgroup of $G$, any reduction of $P$ as a principal $G$-bundle to a principal $H$-subbundle defines a reduction of structure group from $G$ to $H$.

Note that we do not require in the definition that $H$ is a subgroup of $G$ and $\rho: H \hookrightarrow G$ is the inclusion; part of the problem of finding which $H$ can be reductions of the structure group include not only finding $H$ but also the morphism $\rho: H \to G$.

We will need the following fact:
\begin{thm}[{\cite[Theorem 1]{leonard1971G}}] \label{thm_leo_odd}
Let $n = 2k + 1 \geq 3$ be odd. Then, the $\SO(n-1)$ structure over $S^{n-1}$ induced by the principal frame bundle $F S^{n-1} \to S^{n-1}$ does not admit any reduction to a proper closed subgroup $H$ of $\SO(n-1)$, except if $n = 7$ and $H$ is $\U(3)$ or $\SU(3)$.
\end{thm}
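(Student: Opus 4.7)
The plan is to apply the clutching map formalism introduced just above the statement: a continuous principal $G$-bundle over $S^{n-1}$ is classified up to isomorphism by its clutching class in $\pi_{n-2}(G)$, and a reduction to $(H, \rho)$ exists if and only if this class lies in $\rho_*(\pi_{n-2}(H))$. With $G = \SO(n-1) = \SO(2k)$, the first step is to identify the clutching class $[\tau_0] \in \pi_{n-2}(\SO(n-1))$ of $FS^{n-1}$. Via the evaluation fibration $\SO(n-2) \hookrightarrow \SO(n-1) \twoheadrightarrow S^{n-2}$ at a base point, the image of $[\tau_0]$ in $\pi_{n-2}(S^{n-2}) \cong \Z$ equals the Euler number $\chi(S^{n-1}) = 2$, which is nonzero precisely because $n-1$ is even.

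I would then split the proper closed subgroups $H \subset \SO(n-1)$ according to whether $H$ acts transitively on $S^{n-2}$. If $H$ is not transitive, then every $H$-orbit on $S^{n-2}$ has dimension strictly less than $n-2$ (any top-dimensional orbit of a compact group on the connected sphere is open and closed, hence equals the whole sphere). The orbit map $\pi \circ \rho : H \to S^{n-2}$ therefore has image in a proper submanifold of dimension $\leq n-3$, so for any $\tau_1 : S^{n-2} \to H$ the composition $\pi \circ \rho \circ \tau_1 : S^{n-2} \to S^{n-2}$ factors through a lower-dimensional space and has degree zero; since $\pi \circ \tau_0$ has degree $\pm 2$, no such factorization exists and no reduction is possible.

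In the transitive case, I would invoke the Montgomery--Samelson--Borel classification of compact Lie groups acting transitively on spheres, which enumerates the closed connected proper subgroups of $\SO(2k)$ transitive on $S^{2k-1}$: namely $\U(k)$ and $\SU(k)$ for $k \geq 2$; the symplectic series $\Sp(k/2), \Sp(k/2) \cdot \U(1), \Sp(k/2) \cdot \Sp(1)$ for $k$ even; and the two sporadic cases relevant to odd $n$, namely $\Spin(7) \subset \SO(8)$ (for $n = 9$) and $\Spin(9) \subset \SO(16)$ (for $n = 17$). For each candidate I would compute $\rho_* : \pi_{2k-1}(H) \to \pi_{2k-1}(\SO(2k))$ via the long exact sequence of the homogeneous fibration $H \hookrightarrow \SO(2k) \twoheadrightarrow \SO(2k)/H$ combined with Bott periodicity (or explicit homotopy tables for the exceptional groups), and check whether $[\tau_0]$ lies in its image. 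The exception $n = 7$, $H = \SU(3) \subset \SO(6)$ (and $\U(3)$), is realized concretely: octonionic multiplication on $\mathrm{Im}(\mathbb{O})$ endows $S^6$ with a $G_2$-invariant almost complex structure, inducing an $\SU(3)$-reduction of $FS^6$ whose third Chern class equals the Euler class $c_3 = 2$, so the obstruction is indeed overcome.

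The main obstacle is the case-by-case verification for the transitive subgroups in the unstable range of homotopy groups. For $\U(k), \SU(k)$ with $k \neq 3$, ruling out a reduction reduces to the classical non-existence of almost complex structures on $S^{2k}$ for $k \neq 1, 3$; for the $\Sp$-series one must carefully analyze how the inclusion into $\SO(2k)$ interacts with the symplectic homotopy groups; and each of $\Spin(7), \Spin(9)$ must be treated separately using its specific low-degree homotopy. The entire calculation ultimately pinpoints $\SU(3) \subset \SO(6)$ as the unique proper transitive subgroup through which the Euler class obstruction of $\pm 2$ can actually be realized.
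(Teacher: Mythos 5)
This statement is not proved in the paper at all: it is imported verbatim from Leonard \cite{leonard1971G} (Theorem 1), so there is no internal argument to compare your proposal against. What you have written is, in outline, the same classical strategy that underlies the cited result: classify the bundle by its clutching class, observe that its image under the evaluation map $\SO(n-1)\to S^{n-2}$ is the Euler number $\chi(S^{n-1})=2\neq 0$, kill all non-transitive subgroups by the orbit-dimension/degree argument, and then run through the Montgomery--Samelson--Borel list of connected groups acting transitively on $S^{n-2}$. Those parts of your write-up are correct (one small point you leave implicit: for a disconnected $H$ the clutching map has connected image, so one may translate it into the identity component $H_0$, which is again transitive, and thus it does suffice to treat connected subgroups), and your identification of the $n=7$ exception via the $G_2$-invariant almost complex structure on $S^6$, giving the $\SU(3)$- and hence $\U(3)$-reduction, is also right.

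However, as a standalone proof the proposal stops exactly where the real content of Leonard's theorem begins. For $\U(k)$ and $\SU(k)$ you delegate to Borel--Serre (a legitimate but substantial external input), and for the symplectic series $\Sp(k/2)$, $\Sp(k/2)\cdot\U(1)$, $\Sp(k/2)\cdot\Sp(1)$ and the sporadic inclusions $\Spin(7)\subset\SO(8)$ and $\Spin(9)\subset\SO(16)$ you only announce that one ``must compute $\rho_*$ on $\pi_{2k-1}$'' or use characteristic-class identities, without carrying out any of these verifications; these unstable-range computations (e.g.\ excluding a $\Spin(7)$-structure on $S^8$ or a $\Spin(9)$-structure on $S^{16}$) are precisely what the theorem asserts beyond the formal framework. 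So the plan is viable and matches the approach of the cited source, but it is a plan rather than a proof; since the paper itself treats the statement as a citation, the honest course is either to keep the citation or to complete the case-by-case analysis you have sketched.
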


\section{Dynamical holonomies} \label{sec:dyn_connections}

\subsection{Dynamical parallel transport} \label{sec:stable_hol}

Let $(P, \Phi_t)$ be a linear or principal extension of some Anosov flow $\varphi_t: N \to N$ with $N$ closed (see Section \ref{sec:ext_flow}). Under specific conditions, one can construct a notion of dynamical parallel transport along some particular curves associated to the flow $\Phi_t$.

\begin{df}
A \emph{stable holonomy} for $\Phi_t$ is the data of \emph{stable holonomy maps} $\Pi^s_{v \to w}$ defined for every $v \in N, w \in W^s(v)$, satisfying the following properties :
\begin{enumerate}
\item[(1)] $\Pi^s_{v \to w} : P_v \to P_w$ is a continuous and either linear or $G$-equivariant map (depending on the type of the extension $P$);
\item[(2)] $\Pi^s_{v \to v} = \identity$ and $\Pi^s_{v \to w} = \Pi^s_{z \to w} \circ \Pi^s_{v \to z}$ for every $z, w \in W^s(v)$;
\item[(3)] $\Phi_t \circ \Pi^s_{v \to w} = \Pi^s_{\varphi_t(v) \to \varphi_t(w)} \circ \Phi_t$. 
\end{enumerate}
Unstable holonomies $\Pi^u$ are defined similarly.
\end{df}
We also denote $\Pi^c_{v \to \varphi_t(v)} = \Phi_t \vert_{P_v}$ the \emph{central holonomies}.

\begin{rmq}
It is unclear for now whether these are genuine holonomies and if they exist, whether they are induced by some continuous connection. However, when they are defined, if $w \in W^s(v)$, then the holonomy map $P_v \to P_w$ does not depend on a choice of path. In other words, what we will want to call a parallel transport will be flat when restricted to paths in $W^s(v)$. Moreover, there is always a well-defined covariant derivative $\nabla^{\dyn}_X$ in the direction of the generator $X$ of $\Phi_t$, which is precisely $\mathscr{L}_X$.
\end{rmq}

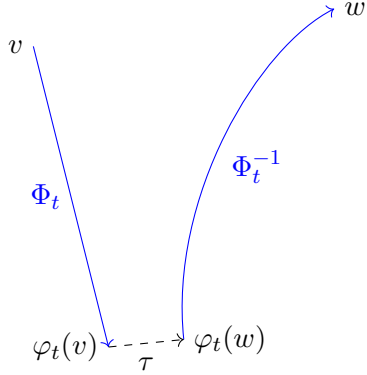
\begin{figure}[!h]
\begin{tikzpicture}
\draw [blue, ->] (0, 0) -- (1, -4);
\draw [blue, ->] (2, -3.9) ..controls (1.8, -2) and (3, 0).. (4, 0.5);
\draw [dashed, ->] (1, -4) -- (2, -3.9);

\draw (0, 0) node[left]{$v$};

\draw (0.5, -2) node[left]{$\blue{\Phi_t}$};
\draw (4, 0.5) node[right]{$w$};

\draw (2.5, -1.6) node[right]{$\blue{\Phi_t^{-1}}$};
\draw (1, -4) node[left]{$\varphi_t(v)$};

\draw (2, -3.9) node[right]{$\varphi_t(w)$};

\draw (1.5, -4) node[below]{$\tau$};
\end{tikzpicture}
\caption{Defining stable holonomies: first go along the flow for a time $t$, then switch to the flowline of $w$, then go back to $w$ by following the flow for the same time $t$.}
\label{fig:stable_hol}
\end{figure}

To define stable holonomies, we will crucially rely on the fact that the flow contracts in the stable direction. More precisely, remark that if $v \in N$ and $w \in W^s(v)$, then constructing the stable holonomy $\Pi^s_{v \to w}$ is equivalent to constructing $\Pi^s_{\varphi_t(v)\to \varphi_t(w)}$ for any $t$. However, remark that if $t$ is large, then the base points $\varphi_t(v)$ and $\varphi_t(w)$ are very close to each other; thus one should be able to identify the fibers $P_{\varphi_t(v)}$ and $P_{\varphi_t(w)}$ by "letting $t \to +\infty$". 

More precisely, stable holonomy would be defined as a limit of the form
$$\Pi^s_{v \to w} = \lim_{t \to + \infty} \Phi_{-t} \circ \tau_{\varphi_t(v) \to \varphi_t(w)} \circ \Phi_t$$
where $\tau$ will end up being any reasonable (this will be made precise later) way to identify $P_{\varphi_t(v)}$ with $P_{\varphi_t(w)}$. This process is pictured in Figure \ref{fig:stable_hol}.

To do this, we will need some assumption on the flows themselves to ensure that the limit exists; for instance it is unclear whether the limit exists when $\Phi_t$ contracts and/or expands along the flowlines. We follow the presentations of \cite{kalinin2013cocycles} and \cite{besson2025rigidity}.

\begin{df}
We say that a flow $\varphi_t$ is \emph{partially hyperbolic} with respect to a metric $g$ on $N$ if there exists a nontrivial \emph{dominated decomposition}
$$V = E_s \oplus E_c \oplus E_u$$
and $\nu, \hat{\nu}, \gamma, \hat{\gamma} \in \mathcal{C}^0(N)$ such that $\nu < 1, \hat{\nu} < 1$ and for any $t> 0$, $v \in N$ and $e_s \in E_s(v), e_c \in E_c(v), e_u \in E_u(v)$ of norm 1, we have
$$\Vert d\varphi_t(e_s) \Vert < \nu(x)^t < \gamma(x)^t < \Vert d\varphi_t(e_c) \Vert < \hat{\gamma}(x)^{-t} < \hat{\nu}(x)^{-t} < \Vert d\varphi_t(e_u) \Vert.$$
If moreover we can take $\nu, \hat{\nu} < \gamma \hat{\gamma}$, we say that $\Phi_t$ is \emph{center-bunched}.
\end{df}
As $N$ is compact, this definition does not depend on the choice of metric.

Anosov flows are immediately partially hyperbolic and center-bunched (for any $\gamma, \hat{\gamma} < 1$ sufficiently close to 1), as the center direction is then generated by the flow generator, which is fixed. More generally, every isometric extension $\Phi_t$ of an Anosov flow is partially hyperbolic and center bunched (for the associated metric), with $E_c = \R X \oplus \ker d\pi$.

In the following, we consider a flow $\varphi_t$ on a closed manifold $N$ which is partially hyperbolic and center-bunched for some metric $g$ and a linear extension $(E, \Phi)$ of $\varphi_t$. 

\begin{df}
Let $(E, \Phi)$ be a linear extension of $\varphi_t$ and $g^E$ a metric on $E$. We say that $\Phi$ is \emph{fiber-bunched} if there exists $\beta > 0, C > 0$ such that
$$\forall v \in N, \forall t > 0, \Vert \Phi_t(v) \Vert \cdot \Vert \Phi_{t}(v)^{-1} \Vert \leq C \min \left (\nu(\pi(v))^{-t\beta}, \hat{\nu}(\pi(v))^{-t\beta} \right ).$$
Here, $\Vert \cdot \Vert$ denotes the operator norm induced by the metric $g^E$ on vector bundle maps $E \to E$.
\end{df}
In other words, the discrepancies between expansion and contraction rates of $\Phi_t$ should be dominated by both the expansion and contraction rates of $\varphi_t$.

Recall that for $\delta > 0$, a Riemannian manifold $(M, g)$ has $\delta$-pinched curvature if up to a homothety, the sectional curvatures $\kappa$ satisfy $- 1 \leq \kappa \leq - \delta$.
\begin{lem} \label{lem_diff_geod}
If $M$ has $\delta$-pinched curvature and $g^{E_s}$ is the Sasaki metric restricted to $E_s$ (see Section \ref{sec:geod_flow}), then the restrictions $d\phi_t : E_s \to E_s$ and $d\phi_t : E_u \to E_u$ are fiber-bunched for $\beta = \delta^{-1/2} - 1$. 
\end{lem}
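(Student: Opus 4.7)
The statement reduces to standard Jacobi field comparison estimates in pinched negative curvature, together with the algebraic identity $\sqrt{\delta}(\delta^{-1/2}-1) = 1-\sqrt{\delta}$ that makes $\beta = \delta^{-1/2}-1$ the critical fiber-bunching exponent. I only discuss $E_s$, the case of $E_u$ being analogous after reversing time.

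First, I would use the identifications $dp \colon H \to \mathcal{N}$ and $\mathcal{K} \colon V \to \mathcal{N}$ of Section \ref{sec:geod_flow} to identify a stable vector $\xi \in E_s\vert_v$ with a pair $(J(0), J'(0)) \in \mathcal{N}_v \oplus \mathcal{N}_v$, where $J$ is the stable Jacobi field along the geodesic generated by $v$; under this identification $d\phi_t \xi = (J(t), J'(t))$. The stable Riccati operator $U(t) = -J'(t) J(t)^{-1}$ is uniformly bounded above by $1$ and below by $\sqrt{\delta}$ thanks to Riccati comparison in the pinched regime, so the Sasaki norm of a stable vector is comparable to $\vert J(0) \vert$ with constants depending only on $\delta$. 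It therefore suffices to estimate $\vert J(t) \vert / \vert J(0) \vert$.

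Next, I apply the Rauch comparison theorem with the pinching bound $-1 \leq \kappa_g \leq -\delta$. For $t \geq 0$ this gives
$$C^{-1} e^{-t} \vert J(0) \vert \leq \vert J(t) \vert \leq C e^{-\sqrt{\delta}\, t} \vert J(0) \vert,$$
the lower bound coming from comparison with constant curvature $-1$ and the upper bound from comparison with constant curvature $-\delta$. Hence $\|d\phi_t\vert_{E_s}\| \leq C e^{-\sqrt{\delta} t}$ and $\|(d\phi_t\vert_{E_s})^{-1}\| \leq C e^{t}$, so
$$\|d\phi_t\vert_{E_s}\| \cdot \|(d\phi_t\vert_{E_s})^{-1}\| \leq C^2 e^{(1-\sqrt{\delta}) t}.$$
The same estimates applied to the full geodesic flow $\phi_t$ show that its partial hyperbolicity rates $\nu, \hat\nu$ may both be taken equal (or arbitrarily close) to $e^{-\sqrt{\delta}}$, so $\nu^{-t\beta}$ and $\hat\nu^{-t\beta}$ are both of order $e^{\sqrt{\delta}\beta t}$. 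With $\beta = \delta^{-1/2}-1$ one has $\sqrt{\delta}\beta = 1 - \sqrt{\delta}$, the two exponential rates match, and fiber-bunching of $d\phi_t\vert_{E_s}$ follows.

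I do not foresee a serious obstacle: the whole substantive content is the classical Rauch/Riccati comparison. The only delicate point is that the critical exponent $\beta = \delta^{-1/2}-1$ is marginal, so one must be careful matching the non-strict pinching bound with the strict inequalities appearing in the definition of partial hyperbolicity. In practice this is handled by a small continuous perturbation of the rates $\nu, \hat\nu$ (which does not change the matching exponent) or by absorbing the resulting discrepancy into the multiplicative constant $C$ of the fiber-bunching definition.
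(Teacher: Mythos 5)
Your proposal is correct and follows essentially the same route as the paper: the standard Jacobi field estimates $\Vert Y \Vert e^{-t} \leq \Vert d\phi_t Y \Vert \leq \Vert Y \Vert e^{-\sqrt{\delta}t}$ on $E_s$, the choice $\nu = e^{-\sqrt{\delta}}$, and the exponent matching $1 - \sqrt{\delta} = \beta\sqrt{\delta}$. The extra detail you supply (Riccati/Rauch comparison, comparability of the Sasaki norm with $\vert J(0)\vert$, and the remark on the marginal exponent being absorbed into the constant $C$) is sound but simply spells out what the paper cites as standard.
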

\begin{proof} Standard estimates for Jacobi fields give us, for $v \in SM, t > 0, Y \in E_s(v)$:
$$\Vert Y \Vert e^{-t} \leq \Vert d\phi_t(v) Y \Vert \leq \Vert Y \Vert e^{-\sqrt{\delta} t}.$$
As a result, we can take for bunching $\nu = e^{-\sqrt{\delta}}$. Thus, fiber bunching holds for $\beta$ such that $1 -\sqrt{\delta} \leq \beta\sqrt{\delta}$, or equivalently $\beta \geq \delta^{-1/2} - 1$. This proves the result for $d\phi_t: E_s \to E_s$. The unstable case is similar.
\end{proof}

The following is an adaptation of \cite[Proposition 4.2]{kalinin2013cocycles} for extensions of Anosov flows which does not rely on specific properties of $\varphi_t$ as in \cite{besson2025rigidity}.
\begin{thm} \label{thm_stable_hol}
Let $(E, \Phi_t)$ be a $\mathcal{C}^1$-regular linear extension of some contact Anosov flow $\varphi_t$ on a closed manifold $N$. Let $D$ be an arbitrary connection. For $v, w \in N$ close enough denote $\tau_{v \to w}$ the associated parallel transport along the unique geodesic from $v$ to $w$. 

If $\Phi_t$ is fiber bunched with constant $\beta < 1$, then the following limit exists, does not depend on the choice of $D$ and defines stable holonomies:
$$\Pi^s_{v \to w} u = \lim_{t \to + \infty} \Phi_{-t} \circ \tau^D_{\varphi_t(v) \to \varphi_t(w)} \circ \Phi_t u$$
where $v \in N$, $w \in W^s(v)$ is close to $v$ and $u \in E_v$. The stable holonomies are continuous with respect to the end points. Moreover, $\Pi^s$ satisfies the following uniform boundedness property: for $v, w$ close enough,
\begin{align}
\Vert \Pi^s_{v \to w} - \tau_{v \to w} \Vert \leq C d_s(v, w). \label{eq:unif_bound_hol_tp}
\end{align}
Analogous results hold for unstable holonomies.
\end{thm}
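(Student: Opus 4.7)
The plan is to show that the family $A_t := \Phi_{-t} \circ \tau^D_{\varphi_t(v) \to \varphi_t(w)} \circ \Phi_t : E_v \to E_w$ is Cauchy as $t \to +\infty$, with the stated properties passing to the limit. Write $v_t := \varphi_t(v)$, $w_t := \varphi_t(w)$. Using $\Phi_{-t} = \Phi_{-(t+1)} \circ \Phi_1$, a direct algebraic rearrangement gives the exact decomposition
$$A_{t+1} - A_t = \Phi_{-(t+1)} \circ B_t \circ \Phi_{t+1}, \quad B_t := \tau^D_{v_{t+1} \to w_{t+1}} - \Phi_1 \circ \tau^D_{v_t \to w_t} \circ \Phi_{-1}.$$
The crucial observation is that the operator $B_t : E_{v_{t+1}} \to E_{w_{t+1}}$ vanishes identically when $v_t = w_t$, since both composed maps then reduce to $\identity$. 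By $\mathcal{C}^1$ regularity of $\Phi_1$ and smoothness of $\tau^D$, together with compactness of $N$, this yields a uniform Lipschitz bound $\Vert B_t \Vert \leq C d(v_t, w_t)$.

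Combining this with the fiber-bunching inequality $\Vert \Phi_{-(t+1)} \Vert \cdot \Vert \Phi_{t+1} \Vert \leq C \nu^{-(t+1)\beta}$ (taking $\nu := \max_N \nu < 1$ by compactness) and the stable exponential contraction $d(v_t, w_t) \leq C \nu^t d_s(v, w)$ (Lemma \ref{lem_diff_geod}-style estimate applied to $\varphi_t$), we obtain
$$\Vert A_{t+1} - A_t \Vert \leq C' \nu^{t(1-\beta)} d_s(v, w).$$
Since $\beta < 1$ and $\nu < 1$, this is geometrically summable, so $(A_n)_{n \in \N}$ is Cauchy; continuity of $t \mapsto A_t$ extends convergence to $\R_+$. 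Telescoping the increments against $A_0 = \tau^D_{v \to w}$ yields at once the uniform bound (\ref{eq:unif_bound_hol_tp}).

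It remains to verify the remaining required properties. Linearity is immediate from linearity of $A_t$. The cocycle relation $\Pi^s_{v \to w} = \Pi^s_{z \to w} \circ \Pi^s_{v \to z}$ and the equivariance $\Phi_t \circ \Pi^s_{v \to w} = \Pi^s_{\varphi_t(v) \to \varphi_t(w)} \circ \Phi_t$ both follow by passing to the limit in the corresponding exact identities for $A_t$; the latter uses a change of variable $t \mapsto t + s$ in the defining limit. For independence of $D$: if $D'$ is a second connection on $N$, then $\tau^D_{v \to w} - \tau^{D'}_{v \to w}$ is $O(d(v, w))$ between nearby points, so the same conjugation-by-$\Phi_t$ estimate shows $\Phi_{-t} \circ (\tau^D - \tau^{D'})_{v_t \to w_t} \circ \Phi_t$ vanishes in the limit. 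Joint continuity of $\Pi^s$ in $(v, w)$ follows from continuity of each $A_t$ together with the uniformity (in $(v, w)$) of the Cauchy estimate.

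The main technical point is the vanishing-on-diagonal argument producing $\Vert B_t \Vert \leq C d(v_t, w_t)$: this is exactly where the $\mathcal{C}^1$ hypothesis on $\Phi_t$ is essential, since one needs a first-order Taylor bound in $(v_t, w_t)$. With weaker Hölder regularity $\mathcal{C}^\alpha$ one would only obtain $\Vert B_t \Vert \leq C d(v_t, w_t)^\alpha$, forcing the more stringent bunching condition $\beta < \alpha$ for summability; this explains the precise appearance of the exponent $\beta$ in the statement. The unstable case is formally identical, replacing $\Phi_t$ by $\Phi_{-t}$ throughout and using the unstable contraction of $\varphi_{-t}$ in place of the stable contraction of $\varphi_t$.
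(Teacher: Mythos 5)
Your overall strategy is sound and is a genuinely different route from the paper's: you run a discrete telescoping/Cauchy argument on $A_t=\Phi_{-t}\circ\tau^D_{\varphi_t(v)\to\varphi_t(w)}\circ\Phi_t$, with the increment $B_t$ controlled by a vanishing-on-the-diagonal Lipschitz bound, whereas the paper works in continuous time, replaces $D$ by a connection $\nabla$ with $\nabla_X=\mathscr{L}_X$ (Lemma \ref{lem_connection}), interprets $P_T$ as parallel transport around a loop and integrates the curvature via the Ambrose--Singer formula. Both reduce to the same two quantitative inputs, and your version is closer in spirit to the Kalinin--Sadovskaya treatment of fiber-bunched cocycles.

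However, there is a genuine gap in how you invoke fiber bunching, and it is exactly the point the paper's Lemma \ref{lem_forward_backward} exists to handle. In your increment estimate the two outer factors are $\Vert\Phi_{-(t+1)}\vert_{E_{w_{t+1}}}\Vert=\Vert\Phi_{t+1}(w)^{-1}\Vert$ and $\Vert\Phi_{t+1}(v)\Vert$, i.e.\ the cocycle based at \emph{two different points} $v$ and $w$; the hypothesis only bounds $\Vert\Phi_t(v)\Vert\cdot\Vert\Phi_t(v)^{-1}\Vert$ at a single point, so some comparison of $\Vert\Phi_t(v)\Vert$ with $\Vert\Phi_t(w)\Vert$ along the stable leaf is required (this is the content of Lemma \ref{lem_forward_backward}, proved using the Lipschitz character of $\Phi_t$ and the contraction of $d_s(v_t,w_t)$). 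Moreover, your shortcut ``take $\nu:=\max_N\nu$'' is not legitimate: the hypothesis $\Vert\Phi_t(v)\Vert\cdot\Vert\Phi_t(v)^{-1}\Vert\leq C\,\nu(\pi(v))^{-t\beta}$ only implies a uniform bound with $\min_N\nu$ (since $\nu(x)^{-t\beta}\le(\min_N\nu)^{-t\beta}$), while the contraction estimate uniformizes with $\max_N\nu$; when $\nu$ is genuinely non-constant the resulting product $(\min_N\nu)^{-t\beta}(\max_N\nu)^{t}$ need not be summable even for $\beta<1$. The fix is the paper's localization: work in the adapted neighborhoods $V_\varepsilon(v)$ on which $\nu$ is nearly constant, use Lemma \ref{lem_forward_backward} to get the bunching bound with $\nu$ evaluated at (essentially) the same point as in the contraction estimate \eqref{eq:shrink_distance}, and exploit the strict inequality $\beta<1$ to absorb the $\varepsilon$-losses. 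With those ingredients inserted, your telescoping argument goes through, including the derivation of \eqref{eq:unif_bound_hol_tp} and the independence of $D$. A minor further point: the cocycle relation is \emph{not} an exact identity for $A_t$ at finite $t$ (parallel transport along a concatenation of geodesics differs from transport along the single geodesic by a curvature term), so it too requires a limiting estimate of the same kind rather than ``passing to the limit in an exact identity''; only the flow-equivariance identity is exact at finite $t$.
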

 
Before proving this result, we state an important corollary:
\begin{csq} \label{cor_diff_geod}
Assume that $M$ has negative strictly $1/4$-pinched curvature. Then, the differential of the geodesic flow $d\phi_t$ admits stable and unstable holonomies.
\end{csq}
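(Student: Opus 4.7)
The plan is to derive the corollary as a direct application of Theorem \ref{thm_stable_hol} combined with Lemma \ref{lem_diff_geod}, which were stated precisely in view of this consequence.

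First, I would verify the hypotheses of Theorem \ref{thm_stable_hol}. Since $M$ is closed and negatively curved, the geodesic flow $\phi_t$ on $SM$ is a contact Anosov flow, the contact form being the Liouville 1-form $\lambda$ recalled in Section \ref{sec:geod_flow}. The differential $d\phi_t$ is a smooth (hence $\mathcal{C}^1$) linear extension of $\phi_t$, and I would consider its restrictions to the invariant subbundles $E_s^{SM}$ and $E_u^{SM}$, which are the relevant ones for defining stable and unstable holonomies respectively.

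The key computation is the fiber bunching condition with exponent $\beta < 1$. By the strict $1/4$-pinching hypothesis, the sectional curvature of $M$ is $\delta$-pinched for some $\delta > 1/4$, and Lemma \ref{lem_diff_geod} provides fiber bunching of $d\phi_t\vert_{E_s}$ and $d\phi_t\vert_{E_u}$ with exponent $\beta = \delta^{-1/2} - 1$. The strict inequality $\delta > 1/4$ gives $\delta^{-1/2} < 2$, so $\beta < 1$, which is exactly the threshold required by Theorem \ref{thm_stable_hol}. Applying that theorem to $d\phi_t\vert_{E_u^{SM}}$ yields the stable holonomies, and to $d\phi_t\vert_{E_s^{SM}}$ the unstable ones. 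I do not anticipate any real obstacle beyond this arithmetic: the only substantive point is the equivalence $\delta^{-1/2} < 2 \iff \delta > 1/4$, which matches the pinching constant with the bunching exponent and explains why the sharp threshold for the existence of dynamical holonomies of the geodesic flow is precisely $1/4$-pinching.
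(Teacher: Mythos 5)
Your overall route is the paper's: combine the fiber bunching of Lemma \ref{lem_diff_geod} with Theorem \ref{thm_stable_hol}, and the arithmetic $\delta > 1/4 \iff \beta = \delta^{-1/2}-1 < 1$ is exactly right. But there is a genuine gap in the sentence ``$d\phi_t$ is a smooth (hence $\mathcal{C}^1$) linear extension \dots and I would consider its restrictions to $E_s^{SM}$ and $E_u^{SM}$.'' Theorem \ref{thm_stable_hol} is stated for a \emph{$\mathcal{C}^1$-regular} linear extension $(E,\Phi_t)$, and its proof genuinely uses this: one needs a differentiable connection on $E$ (Lemma \ref{lem_connection}), its curvature, and Lipschitz bounds on parallel transport, all of which require the \emph{bundle} $E$ itself to be $\mathcal{C}^1$. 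While $d\phi_t$ is smooth on $TSM$, you cannot apply the theorem there (on all of $TSM$ fiber bunching fails, since $\Vert d\phi_t\Vert\cdot\Vert d\phi_{-t}\Vert$ grows like $e^{2t}$), so you must restrict to $E_s^{SM}$ and $E_u^{SM}$ — and these subbundles are, for a general Anosov geodesic flow, only H\"older continuous, so the restricted extension is not automatically $\mathcal{C}^1$ and the smoothness of $d\phi_t$ on $TSM$ does not transfer.

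The missing ingredient, which is the second half of the paper's one-line proof, is that under negative strictly $1/4$-pinched curvature the stable and unstable bundles are in fact $\mathcal{C}^1$ (\cite[Theorem 9.4.15]{fisher2019hyperbolic}). With that regularity statement in hand, $d\phi_t\vert_{E_s}$ and $d\phi_t\vert_{E_u}$ become $\mathcal{C}^1$-regular linear extensions, and your application of Lemma \ref{lem_diff_geod} and Theorem \ref{thm_stable_hol} (each restriction then admits both stable and unstable holonomies) goes through. So the proposal is not wrong in strategy, but the claim that there is ``no real obstacle beyond the arithmetic'' overlooks precisely the regularity hypothesis that the strict $1/4$-pinching is also needed to supply.
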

\begin{proof}
This is a direct consequence of Lemma \ref{lem_diff_geod} and of the fact that when $M$ has negative strictly $1/4$-pinched curvature, then the stable and unstable bundles are $\mathcal{C}^1$, see \cite[Theorem 9.4.15]{fisher2019hyperbolic}.
\end{proof}

The proof of Theorem \ref{thm_stable_hol} relies on the Ambrose-Singer formula \cite[Lemma 20.1.7]{lefeuvre2024microlocal} as well as the following lemma:
\begin{lem} \label{lem_connection}
There exists a differentiable connection $\nabla$ on $E$ such that $\nabla_X$ coincides with the Lie derivative $\mathscr{L}_X$.
\end{lem}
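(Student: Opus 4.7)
The plan is to obtain $\nabla$ by modifying an arbitrary smooth background connection with a pointwise tensor supported in the flow direction. Fix any smooth linear connection $\nabla^0$ on $E$. Because $\Phi_t$ acts linearly on fibres, the Lie derivative
$$\mathscr{L}_X s (v) \;:=\; \frac{d}{dt}\bigg|_{t=0} \Phi_{-t}\bigl(s(\varphi_t v)\bigr)$$
is a well-defined first-order $\R$-linear operator on $\Gamma(E)$ satisfying the Leibniz rule $\mathscr{L}_X(fs) = X(f)\,s + f\,\mathscr{L}_X s$ for $f \in \mathcal{C}^\infty(N)$. The same rule holds for $\nabla^0_X$, so their difference
$$A \;:=\; \nabla^0_X - \mathscr{L}_X$$
is $\mathcal{C}^\infty(N)$-linear in $s$; hence it is a pointwise endomorphism $A \in \Gamma(\End E)$, with regularity inherited from $\Phi_t$.

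Next, I exploit that $X$ is nowhere vanishing on the compact base $N$, since Anosov flows have no rest points. This produces a smooth 1-form $\lambda \in \Omega^1(N)$ with $\lambda(X) \equiv 1$: for instance take $\lambda = g(X,\cdot)/g(X,X)$ for any auxiliary Riemannian metric $g$ on $N$, or, in the contact case, the Liouville 1-form. Then set
$$\nabla \;:=\; \nabla^0 - \lambda \otimes A.$$
Since $\lambda \otimes A$ is an $\End E$-valued 1-form, the difference is again a linear connection on $E$, with the regularity of $A$. By construction $\nabla_Y s = \nabla^0_Y s - \lambda(Y)\,A(s)$ for every vector field $Y$; specialising to $Y = X$ and using $\lambda(X) = 1$ gives $\nabla_X = \nabla^0_X - A = \mathscr{L}_X$, as required.

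The only delicate point is the regularity claim. Passing from $\Phi_t$ to $A$ requires one derivative in $t$, so a priori $A$—and hence $\nabla$—is only as regular in the base variable as $\partial_t \Phi_t|_{t=0}$. Under the standing $\mathcal{C}^1$ hypothesis on $\Phi_t$ this yields at least a continuous connection, with genuine differentiability along the flow built into the defining identity $\nabla_X = \mathscr{L}_X$; this is precisely the operational level of smoothness needed for the subsequent Ambrose-Singer calculation in the proof of Theorem \ref{thm_stable_hol}, where $\nabla$ will be differentiated only along $X$ and along transverse stable directions that the flow contracts. No further uniqueness is asserted: many such connections exist (the construction depends on the auxiliary choices $\nabla^0$ and $\lambda$), but the statement only claims existence.
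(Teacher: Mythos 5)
Your construction is exactly the paper's: take an arbitrary differentiable connection $\nabla^0$, observe that $\mathscr{L}_X - \nabla^0_X$ is tensorial, and correct by $\lambda \otimes(\mathscr{L}_X - \nabla^0_X)$ with $\lambda(X)=1$ (the paper simply uses the contact form for $\lambda$). The argument is correct and essentially identical to the one in the paper, including the verification $\nabla_X = \mathscr{L}_X$ via $\lambda(X)=1$.
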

\begin{proof}
Let $\nabla'$ be any differentiable connection on $E$. Define $\nabla$ by
\[
\nabla = \nabla' + \lambda \otimes (\mathscr{L}_X - \nabla'_X),
\]
where $\lambda$ is the contact form. Note that $\nabla$ and $\nabla'$ differ by an $\End(E)$-valued 1-form, so $\nabla$ is indeed a connection. Furthermore, $\nabla$ is differentiable since $\nabla'$, $\lambda$, and $X$ are differentiable. For any section $Y$ of $E$, we have
\[
\nabla_X Y = \nabla'_X Y + \lambda(X) (\mathscr{L}_X Y - \nabla'_X Y) = \mathscr{L}_X Y,
\]
since $\lambda(X) = 1$. Thus, $\nabla_X$ coincides with the Lie derivative $\mathscr{L}_X$ as required.
\end{proof}

We can now prove Theorem \ref{thm_stable_hol}.
\begin{proof}[Proof of Theorem \ref{thm_stable_hol}]
The first step is to construct an adapted neighborhood for $v$. By \cite[Proposition 1.1]{besson2025rigidity} (which holds for general Anosov flows with the same proof), there is a uniform lower bound $\rho > 0$ on the radius of injectivity of $W^s(v)$ for any $v \in N$. Let $\varepsilon > 0$. Choose $V = V_\varepsilon(v)$ a normal neighborhood of $v$ in $W^s(v)$ such that $\nu(w) \leq \nu(v)^{1 - \varepsilon},w \in V$ and the diameter of $V$ is less than $\rho$. For $t > 0$, denote $v_t = \varphi_t(v), w_t = \varphi_t(w)$. We have the following:

\begin{lem} \label{lem_forward_backward}
Let $\varepsilon > 0$. Then for any $T > 0$, one can find $c, C > 0$ such that for $v \in N, w \in V_\varepsilon(v)$, 
$$ \Vert \Phi_{t}(v) \Vert \cdot \Vert \Phi_{t}(w)^{-1} \Vert \leq C(1 + c d_s(v, w)) \nu(w)^{-t\beta}.$$
\end{lem}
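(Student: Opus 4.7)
The plan is to combine the fiber-bunching estimate applied at $w$ with a uniform (in $t$) multiplicative comparison
$$\Vert \Phi_t(v)\Vert \leq (1 + c\, d_s(v,w))\,\Vert \Phi_t(w)\Vert$$
for $v, w$ on a common stable leaf with $w \in V_\varepsilon(v)$. Indeed, fiber bunching at $w$ gives $\Vert \Phi_t(w)\Vert \cdot \Vert \Phi_t(w)^{-1}\Vert \leq C\nu(\pi(w))^{-t\beta}$, so multiplying the comparison above by $\Vert \Phi_t(w)^{-1}\Vert$ yields the lemma.

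To obtain the comparison, I would parametrize a smooth curve $\gamma : [0, L] \to W^s(v)$ of length $L \asymp d_s(v, w)$ joining $w$ to $v$, and work in a smooth local trivialization of $E$ over the tube $\{\varphi_s(\gamma(r)) : s \in [0, t],\ r \in [0, L]\}$. There, $\Phi_t(\gamma(r))$ is represented by a matrix $A_t(r)$ solving $\partial_t A_t(r) = B(\varphi_t \gamma(r))\,A_t(r)$, where $B$ is the (continuous) infinitesimal generator of $\Phi$. Differentiating in $r$ gives, by Duhamel's principle,
$$\partial_r A_t(r) = \int_0^t \Phi_{t-s}\bigl(\varphi_s \gamma(r)\bigr) \cdot (\partial_v B)(\varphi_s\gamma(r))\bigl[d\varphi_s \dot\gamma(r)\bigr] \cdot \Phi_s(\gamma(r))\,ds.$$
The cocycle identity $\Phi_{t-s}(\varphi_s\gamma) = \Phi_t(\gamma)\,\Phi_s(\gamma)^{-1}$ together with fiber bunching at $\gamma(r)$ bounds $\Vert \Phi_{t-s}(\varphi_s\gamma)\Vert \cdot \Vert \Phi_s(\gamma)\Vert$ by $C \nu^{-s\beta}\Vert \Phi_t(\gamma)\Vert$; combined with the uniform bound on $\partial_v B$ coming from the $\mathcal{C}^1$ regularity of $\Phi$ and the Jacobi estimate $\Vert d\varphi_s \dot\gamma\Vert \leq \nu^s$ in the stable direction, this yields
$$\Vert \partial_r A_t(r) \Vert \leq K\,\Vert A_t(r)\Vert \int_0^t \nu^{s(1-\beta)}\,ds \leq K'\,\Vert A_t(r)\Vert,$$
the integral converging uniformly in $t$ because $\beta < 1$ and $\nu$ is bounded away from $1$ on the compact manifold $N$. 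A Grönwall estimate in $r$ then gives $\Vert \Phi_t(v)\Vert \leq e^{K'L}\Vert \Phi_t(w)\Vert$, and since $L$ is bounded by the diameter of $V_\varepsilon(v)$, linearization produces the desired $(1 + c\, d_s(v,w))$ factor.

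The main obstacle is the rigorous justification of the variational formula: since $\Phi$ is only assumed $\mathcal{C}^1$, the generator $B$ is merely continuous and $\partial_v B$ is not a priori defined. Two remedies are available: one may approximate $\Phi$ by $\mathcal{C}^2$ flows to which Duhamel applies and pass to the limit using the uniform fiber-bunching constants, or instead replace the continuous argument by its discrete analogue, iterating a finite-time $\mathcal{C}^1$ comparison along a partition $0 < \Delta t < 2\Delta t < \dots < t$ and summing a geometric series whose common ratio $\nu^{\Delta t(1-\beta)} < 1$ ensures uniform control in $t$.
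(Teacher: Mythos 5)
Your proposal is correct in substance and shares the paper's overall skeleton: both reduce the lemma to the two-point comparison $\Vert \Phi_t(v)\Vert \leq (1 + c\,d_s(v,w))\,\Vert \Phi_t(w)\Vert$ and then multiply by $\Vert \Phi_t(w)^{-1}\Vert$, invoking fiber bunching at $w$. Where you diverge is in how that comparison is obtained. The paper gets it in one line: it writes $\Vert \Phi_t(v)\Vert \leq \Vert \Phi_t(v) - \tau_{\varphi_t w \to \varphi_t v}\,\Phi_t(w)\,\tau_{v \to w}\Vert + \Vert \tau_{\varphi_t w \to \varphi_t v}\,\Phi_t(w)\,\tau_{v \to w}\Vert$, bounds the first term by $C\Vert \Phi_t(w)\Vert\, d_s(v,w)$ on the grounds that $\Phi_t$ and the parallel transports are differentiable, hence ``globally Lipschitz by compactness,'' and bounds the second using that short-distance parallel transport is an almost-isometry. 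Your route instead proves the relative Lipschitz estimate: differentiate (Duhamel) along a stable curve $\gamma$ from $w$ to $v$, note that at each fixed $r$ both factors $\Phi_{t-s}(\varphi_s\gamma(r))$ and $\Phi_s(\gamma(r))$ sit along the orbit of the single point $\gamma(r)$ so that the bunching hypothesis applies directly, combine with the stable contraction $\Vert d\varphi_s\dot\gamma\Vert \lesssim \nu^s$ to get $\Vert \partial_r A_t(r)\Vert \leq K'\Vert A_t(r)\Vert$ uniformly in $t$ (since $\beta < 1$), and conclude by Gr\"onwall. This buys something real: the constant in your comparison is manifestly uniform in $t$, which is exactly what the lemma is used for later (the bound is integrated over $t \in [0,T]$ with $T \to \infty$ in Theorem \ref{thm_stable_hol}), whereas ``globally Lipschitz by compactness,'' read literally, only yields a $t$-dependent Lipschitz constant, so your argument supplies the justification the paper's phrasing leaves implicit.

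Two small caveats on your write-up. First, a single smooth trivialization over the tube $\{\varphi_s(\gamma(r))\}$ is problematic for large $t$; define the generator $B$ relative to a fixed background connection (as the paper does via Lemma \ref{lem_connection}) so that $B$ and the Duhamel formula are globally meaningful, with parallel transports inserted. Second, you rightly flag that $\mathcal{C}^1$ regularity of $\Phi$ only makes $B$ continuous; of your two remedies, the discrete iteration is the standard one, but note that the naive telescoping between the orbits of $v$ and $w$ produces mixed tail/head products along different orbits, so one should either discretize in the leaf parameter as well (mimicking your $r$-derivative, so all bunching is applied along one orbit) or run a short bootstrap on the ratio $\Vert\Phi_{k\Delta t}(v)\Vert/\Vert\Phi_{k\Delta t}(w)\Vert$; with either fix the geometric series with ratio $\nu^{\Delta t(1-\beta)}$ closes the argument as you describe.
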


\begin{proof}
Using that $\Phi_t$ as well as $\nabla$ parallel transport maps are differentiable, so globally Lipschitz by compactness, we have
\begin{align*}
\Vert \Phi_t(v) \Vert &\leq \Vert \Phi_t(v) - \tau_{\varphi_tw \to \varphi_tv} \Phi_t(w) \tau_{v\to w} \Vert + \Vert \tau_{\varphi_tw \to \varphi_tv} \Phi_t(w) \tau_{v\to w} \Vert \\
&\leq C \Vert \Phi_t(w) \Vert d_s(v, w) \\
&+ (1 + C' d_s(v, w)) \Vert \Phi_t(w) \Vert (1 + C'' d_s(v_t, w_t)) \\
&\leq (1 + c d_s(v, w)) \Vert \Phi_t(w) \Vert.
\end{align*}
Then, 
\begin{align*}
\Vert \Phi_{t}(v) \Vert \cdot \Vert \Phi_{t}(w)^{-1} \Vert &= \frac{\Vert \Phi_{t}(v) \Vert}{\Vert \Phi_{t}(w) \Vert} \Vert \Phi_{t}(w) \Vert \Vert \Phi_{t}(w)^{-1} \Vert \\
&\leq C(1 + c d_s(v, w)) \nu(w)^{-t\beta}
\end{align*}
where we used the fiber bunching.
\end{proof}

Now, we want to ensure that we can stay in such neighborhoods while working with the flow. Let $\gamma: [0, 1] \to V$ be the constant speed geodesic from $v$ to $w$ lying in $V_\varepsilon(v)$. Note that the curve $\varphi_t \circ \gamma$ links $v_t$ and $w_t$ while staying in the unstable leaf $W^s(v)$, so it is longer than the geodesic from $v_t$ to $w_t$. As a result,
\begin{align}
d_s(v_t, w_t) &\leq \int_0^1 \Vert d\varphi_t(\gamma(s)) \gamma'(s) \Vert ds \notag\\
&\leq \int_0^1 \nu(\gamma(s))^t dt \ d_s(v, w) \notag, \\
d_s(v_t, w_t) &\leq \nu(v)^{t(1-\varepsilon)} d_s(v, w). \label{eq:shrink_distance}
\end{align}
For $t$ large enough, as $w \in W^s(v)$, the distance $d(v_t, w_t)$ will be smaller than $\rho$. By replacing $v, w$ by $v_{t_0}, w_{t_0}$ for some $t_0$ large enough, we can assume it is the case for $t > 0$. Moreover, as $N$ is compact, $\nu$ is uniformly continuous so that by taking $t_0$ large enough we can ensure that $\varphi_t(V_\varepsilon(v)) \subset V_\varepsilon(v_t)$ for all $t > 0$.

\vspace{5mm}

We now begin the constructions of stable holonomies. Let $\nabla$ be the connection provided by Lemma \ref{lem_connection}. We will show that the following sequence converges as $t \to + \infty$ :
$$P_t = \tau^\nabla_{w \to v} \circ\Phi_{-t} \circ \tau^\nabla_{v_t \to w_t} \circ \Phi_{t}.$$
Note that this sequence is well-defined when $w \in V_\varepsilon(v)$.

Denote $\gamma: [0, 1] \to V_\varepsilon(v)$ the unique constant speed geodesic in $W^s(v)$ between $v$ and $w$ and $w_t^s = \varphi_t(\gamma(s))$. Note that for $T > 0$, $P_T$ can be interpreted as the $\nabla$-parallel transport along a closed loop $\alpha^T$. Further, there is a homotopy $\Gamma^T = \Gamma\vert_{[0, T] \times [0, 1]}$ where $\Gamma(s, t) = \varphi_t \circ \gamma(s) = w_t^s$ such that $\alpha^T$ is the $\nabla$-parallel transport along the boundary of the image of $\Gamma^T$. More precisely, if $C_\to(s, t)$ is the parallel transport along $r \mapsto \Gamma^T(r, 0), 0 \leq r \leq t$ then along $\Gamma^T(t, r), 0 \leq r \leq s$ and $C_\uparrow(s, t)$ is the parallel transport along $\Gamma^T(0, r), 0 \leq r \leq s$ then along $r \mapsto \Gamma^T(r, s), 0 \leq r \leq t$, then $\Pi_T = C_\uparrow(T, 1)^{-1} C_\to(T, 1)$.

The Ambrose-Singer formula \cite[Lemma 20.1.7]{lefeuvre2024microlocal} then writes 
$$P_T = \identity + \int_0^T \int_0^1 C_\uparrow(s, t)^{-1} \mathbf{F}_\nabla(\partial_t \Gamma(s, t), \partial_s \Gamma(s, t)) C_\to(s, t) ds dt.$$
Thus, we have to prove that the integral on the right converges when $T \to + \infty$.

We now bound the integrand for $t \geq 0, s \in [0, 1]$. We take the metric on $TSM$ to be the Sasaki metric. The curvature $\mathbf{F}$ of $\nabla$ is a continuous tensor on $N$, so its operator norm is bounded by compactness: if $Y_v, Z_v \in E_v$, then there is $C > 0$ independent of $v$ such that 
$$\Vert F_v(Y_v, Z_v) \Vert \leq C \Vert Y_v \Vert \Vert Z_v \Vert.$$
Similarly, the parallel transport maps are differentiable, so they are uniformly bounded as linear maps between fibers. As a result,
\begin{align*}
&\Vert C_\uparrow(s, t)^{-1} \mathbf{F}_\nabla(\partial_t \Gamma(s, t), \partial_s \Gamma(s, t)) C_\to(s, t) \Vert \\
&\quad \lesssim \Vert \Phi_t(w_0^s)^{-1} \Vert \cdot \Vert \partial_t \varphi_t \circ \gamma(s) \Vert \cdot \Vert \partial_s \varphi_t \circ \gamma(s) \Vert \cdot \Vert \Phi_t(v) \Vert.
\end{align*}
Remark that $\partial_t \varphi_t = X$ is unitary for the Sasaki metric. Then,
\begin{align*}
\Vert \partial_s \varphi_t \circ \gamma(s) \Vert &\leq \Vert d\varphi_t(w_0^s) \Vert \cdot \Vert \gamma'(s) \Vert \\
&\leq \nu(w_0^s)^t d_s(v, w)
\end{align*}
as $\gamma$ has constant speed $d_s(v, w)$. Finally, Lemma \ref{lem_forward_backward} lets us bound the rest, so that 
\begin{align*}
&\Vert C_\uparrow(s, t)^{-1} \mathbf{F}_\nabla(\partial_t \Gamma(s, t), \partial_s \Gamma(s, t)) C_\to(s, t) \Vert \\
&\quad \lesssim (1 + c d_s(v, w_0^s)) \nu(w_0^s)^{-t\beta} \nu(w_0^s)^t d_s(v, w) \\
&\quad \lesssim \nu(w_0^s)^{t(1 -\beta)} d_s(v, w)
\end{align*}
using that $d_s(v, w_0^s) \leq d_s(v, w)$. As $\beta < 1$, the integral of $\nu(w_0^s)^{t(1 -\beta)}$ converges and our integrand is integrable. Furthermore, the integrand is dominated by $(\sup_{SM} \nu)^{t(1 -\beta)} d_s(v, w)$ which is independent of the base points, hence the dominated convergence theorem tells us that $P_T$ converges to a continuous parallel transport $P_\infty$. Composing on the left $P_\infty$ by $\tau^\nabla_{v \to w}$ gives us the existence and regularity of $\Pi^s_{v \to w}$. Similarly, composing the Ambrose-Singer identity on the left by $\tau^\nabla_{v \to w}$ and applying the previous bound gives us Equation \eqref{eq:unif_bound_hol_tp}.

It remains to prove that the previous results hold for other connections and does not depend on the choice of connection. Let $D$ be an arbitrary connection, then the fiber bunching and Lemma \ref{lem_forward_backward} imply
\begin{align*}
&\Vert \Phi_{-t} \circ \tau^D \circ \Phi_t - \Phi_{-t} \circ \tau^D \circ \Phi_t \Vert \\
&\quad \leq \Vert \Phi_{-t} \Vert \cdot \Vert \Phi_t \Vert \cdot \Vert \tau^\nabla - \tau^D \Vert \\
&\quad \lesssim \nu(w)^{-t\beta} d_s(v_t, w_t).
\end{align*}
Then, by using the relation \eqref{eq:shrink_distance}, we have 
$$\nu(w)^{-t\beta} d_s(v_t, w_t) \leq \nu(v)^{-t\beta(1 - \varepsilon)} \nu(v)^{t(1 - \varepsilon)} d_s(v, w)$$
so that as $(1 - \beta)(1 - \varepsilon) > 0$ the above converges to zero when $t \to +\infty$.
\end{proof}

\begin{ex}[The geodesic flow on real hyperbolic manifolds] \label{ex:hyp_geod_hol}
The fact that real hyperbolic manifolds $M$ have constant curvature allows us to describe almost completely the holonomy group associated to the differential of the geodesic flow $d\phi_t$. Indeed, the Jacobi equation \eqref{eq:jacobi} writes $\nabla_t^{2} J = J$, so orthogonal Jacobi fields along any path $\gamma$ may be obtained as 
$$J(t) = e^t \tau^{LC}_{\gamma} J(0)^u + e^{-t} \tau^{LC}_\gamma J(0)^s$$
where $\tau^{LC}$ denotes parallel transport with respect to the Levi-Civita connection and $J(0) = J(0)^s + J(0)^u$ is the decomposition of $J(0)$ in the stable and unstable subspaces. 

Note that unstable holonomies are flat along the horospheres. As a result, with the notations of Theorem \ref{thm_stable_hol}, for any $v \in SM, w \in W^s(v), u \in E_s\vert_v$, the following does not depend on $t$:
$$\forall t \geq 0, \ \Pi_{v \to w} u = \Phi_{-t} \circ \tau^{LC}_{\varphi_t(v) \to \varphi_t(w)} \circ \Phi_t u = \tau^{LC}_{v \to w} u.$$
Incidentally, we see that the restriction of $\nabla^{LC}$ to the horospheres is flat (see also \cite{besson2025rigidity}). 

However, let us not forget central holonomies. For $u \in E_s \vert_v$, we have $d\phi_t(v)u = e^{-t} \tau^{LC}_{v \to \phi_t(v)}u$. As a result, dynamical holonomies of elements of $E_s$ (and, symmetrically, of $E_u$) are scalar multiples of holonomies along paths for the Levi-Civita connection.
\end{ex}

We now generalize the construction of dynamical parallel transport to principal bundles.
\begin{csq}
Let $G \subset \GL_k(\R)$ be a closed Lie subgroup, $P$ a principal $G$-bundle over $N$ and $\Phi_t : P \to P$ an extension of $\varphi_t$. Assume that the induced flow on $E = P \times_{G} \R^k$ is fiber bunched with constant $\beta \leq 1$. Then, there exists a dynamical holonomy on $P$ which moreover commutes with the right action of $G$.
\end{csq}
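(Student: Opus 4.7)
The plan is to apply Theorem \ref{thm_stable_hol} to the induced linear extension on $E = P \times_G \R^k$ and descend the resulting holonomy maps to $P$ by exploiting the identification of $P$ with the bundle of $G$-frames of $E$. Concretely, each $u \in P_v$ corresponds to the linear isomorphism $\tilde u : \R^k \to E_v$, $\xi \mapsto [u, \xi]$, and the image $\{\tilde u : u \in P_v\}$ is a single $G$-orbit in the space of linear isomorphisms $\R^k \to E_v$. Since $G$ is closed in $\GL_k(\R)$, this orbit is closed in the operator norm topology.

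In the approximation scheme of Theorem \ref{thm_stable_hol}, the auxiliary connection $D$ is arbitrary; I choose it to be the connection on $E$ induced by a principal connection on $P$ (which exists by a standard partition of unity argument). Its parallel transport $\tau^D_{v \to w}: E_v \to E_w$ then maps $G$-frames to $G$-frames, and the induced flow $\Phi_t^E[u, \xi] = [\Phi_t^P u, \xi]$ manifestly does the same. It follows that every approximating linear map
\[
L_t \;=\; \Phi_{-t}^E \circ \tau^D_{\varphi_t(v) \to \varphi_t(w)} \circ \Phi_t^E : E_v \longrightarrow E_w
\]
sends $G$-frames to $G$-frames.

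Because the $G$-orbit of frames in the target is closed and Theorem \ref{thm_stable_hol} provides convergence of $L_t$ in operator norm, the limit $\Pi^s_{v \to w}$ also preserves $G$-frames. Setting $\Pi^{s,P}_{v \to w}(u) := \Pi^s_{v \to w} \circ \tilde u$ then defines a continuous map $P_v \to P_w$, and the computation $\Pi^{s,P}_{v \to w}(u \cdot g) = \Pi^s_{v \to w} \circ \tilde u \circ g = \Pi^{s,P}_{v \to w}(u) \cdot g$ shows commutation with the right action. The cocycle relation and the intertwining with $\Phi_t^P$ transfer directly from the corresponding properties on $E$, and unstable holonomies are obtained by running the same argument with the time-reversed flow.

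The only conceptual subtlety is the choice of an initial connection $D$ on $E$ that comes from a principal connection on $P$, so that the approximating maps stay inside the closed $G$-orbit of frames and the limit inherits this property; the hard analytic input (existence and independence of the limit) is already provided by Theorem \ref{thm_stable_hol}.
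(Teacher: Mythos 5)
Your proposal is correct and follows essentially the same route as the paper: apply Theorem \ref{thm_stable_hol} to the induced flow on $E$ with the auxiliary connection chosen to come from a principal connection on $P$, observe that every approximating map then preserves $G$-frames, and pass to the limit using closedness (the paper phrases this as the closed embedding $P \hookrightarrow F_{\GL}(E)$, you as the closed $G$-orbit of frames), with right $G$-equivariance following because holonomies act by composition on the left. The only cosmetic difference is that the paper works inside the full frame bundle $F_{\GL}(E)$ and restricts, whereas you work directly with the $G$-frame description of $P$; the argument is the same.
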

\begin{proof}
Consider the frame bundle $F_{\GL}(E)$. The inclusion map $\iota: G \subset \GL_k(\R)$ induces a closed embedding of principal bundles $P \to F_{\GL}(E)$. We may fix a principal connection $D$ on $P$, which extends to a principal connection on $F_{\GL}(E)$ with the same horizontal space. More precisely, the horizontal space $\HH_u$ at $u \in F_{\GL}(E)$ is $(R_a)_* \iota_* \HH_z$ where $u = \iota(z) \cdot a$ with $z \in P$ and $a \in \GL_k(\R)$.

Then, the flow $\Phi_t^P$ on $P$ induces a flow $\Phi_t^E$ on $E$ which satisfies the assumptions of Theorem \ref{thm_stable_hol} by assumption. As a result, there are well-defined (say) stable holonomy maps on $E$ associated to the connection induced by $D$ on $E$ given by formulas
$$\Pi^E_{v, w} Y = \lim_{t \to + \infty} \Phi_{t}^E(w)^{-1} \circ \tau^D_{\varphi_t(v) \to \varphi_t(w)} \circ \Phi_t^E(v) Y.$$
These maps then act on $F_{\GL}(E)$ by composition on the left, thus commuting with the right action of $\GL_k(\R)$. Moreover, the flow $\Phi_t^E$ extends to $F_{\GL}(E)$, again acting on the left, and stable holonomies on $F_{\GL}(E)$ are thus defined by
$$\Pi^{F_{\GL}(E)}_{v, w} u = \lim_{t \to + \infty} \Phi_{t}(w)^{-1} \circ \tau^D_{\varphi_t(v) \to \varphi_t(w)} \circ \Phi_t(v) u.$$
Then, remark that the inclusion $P \to F_{\GL}(E)$ maps $\Phi_t^P$ to $\Phi_t^{F_{\GL}(E)}\vert_P$, and that for $u \in P \subset F_{\GL}(E)$, $\Phi_{t}(w)^{-1} \circ \tau^D_{\varphi_t(v), \varphi_t(w)} \circ \Phi_t(v) u \in P$, as $D$ preserves $P$ by construction. As a result, since $P$ is closed in $F_{\GL}(E)$ and letting $t \to +\infty$, the stable holonomies $\Pi^{F_{\GL}(E)}$ restrict to stable holonomies on $P$. Moreover, they are invariant under the right action of $G$ as they were invariant under the right action of $\GL_k(\R)$.
\end{proof}

\begin{rmq}
If $p : \hat{N} \to N$ is a Riemannian cover, then the flow and the dynamical holonomies on $P$ lift to the cover $p^* P \to \hat{N}$.
\end{rmq}

\subsection{Dynamical holonomy groups} \label{sec:dyn_hol}

We now consider holonomies along loops. Let $v, w \in N$. Denote $\mathcal{P}_{v, w}$ the set of continuous curves from $v$ to $w$ which are finite concatenations of paths $\gamma^s \subset W^s(\gamma^s(0)), \gamma^c \subset W^c(\gamma^s(1)), \gamma^u \subset W^u(\gamma^c(0))$. Also denote $\mathcal{P}_v = \mathcal{P}_{v, v}$ and $\mathcal{P}_{v, 0}$ the subset of contractible loops. The local product structure theorem tells us that such paths always exist; we say more about this in Lemma \ref{lem_exist_csu}.

Then, stable/unstable/center holonomies define a representation of $\mathcal{P}_v$ as a monoid. Indeed, parallel transport along a loop $\gamma = \gamma_1^s \cdot \gamma_1^c \cdot \gamma_1^u \cdots \gamma_k^s \cdot \gamma_k^c \cdot \gamma_k^u \in \mathcal{P}_v$ defines a linear map $\Pi_P(\gamma) = \Pi^s_{\gamma_1^s} \circ \cdots \circ \Pi^u_{\gamma_k^u} : P_v \to P_v$. 

We now choose an identification of $P_v$ with $G$. The fact that holonomies commute with the right action of $G$ implies that $\Pi_P(\gamma): G \to G$ is a translation on the left by some element of $G$. As a result, we obtain a representation 
$$\Pi_P: \mathcal{P}_v \to G.$$
\begin{df}[Full holonomy group] \label{def:full}
We shall denote $\overline{\Hol}^{\dyn}(P, \Phi_t, v) = \overline{\Pi(\mathcal{P}_v)}$ the \emph{full dynamical holonomy group} of $\Phi$ at $v$, also called \emph{transitivity group} of the flow $\Phi_t$.
\end{df}
 Note that changing the base point $v$ or the identification $P_v \cong G$ defines another representation which is conjugate to the first one.

\begin{pps}[Holonomy bundle] \label{pps_hol_bdle}
Let $u \in P_v$ and denote $H = \overline{\Hol}^{\dyn}(P, \Phi_t, v)$. There exists a unique continuous, $\Phi_t$-invariant, principal $H$-subbundle $Q(u)$ of $P$ containing $u$.
\end{pps}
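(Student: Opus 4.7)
The natural candidate for $Q(u)$ is obtained by transporting a closed right $H$-orbit at $v$ along the dynamical holonomies. First I would fix the trivialization $P_v \cong G$ sending $u \mapsto e$, so that $\Pi_P : \mathcal{P}_v \to G$ acts via left translation and $H = \overline{\Pi_P(\mathcal{P}_v)}$. Set $Q(u)_v := H \cdot u$, which is closed in $P_v$ and coincides with $\overline{\Pi_P(\mathcal{P}_v) \cdot u}$. In this trivialization one has $h \cdot u = u \cdot h$ for every $h \in H$, so $Q(u)_v = u \cdot H$ is simultaneously a right $H$-orbit. For arbitrary $w \in N$, local product structure (Lemma \ref{lem_exist_csu}) guarantees $\mathcal{P}_{v,w} \neq \emptyset$; choose any $\gamma \in \mathcal{P}_{v,w}$ and define $Q(u)_w := \Pi(\gamma)(Q(u)_v)$.

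The next step is to verify path-independence together with the subbundle and invariance properties. For $\gamma_1, \gamma_2 \in \mathcal{P}_{v,w}$ the concatenation $\gamma_1^{-1}\gamma_2 \in \mathcal{P}_v$, and hence $\Pi(\gamma_1)^{-1}\Pi(\gamma_2) \in \Pi_P(\mathcal{P}_v) \subset H$ preserves $Q(u)_v$, yielding $\Pi(\gamma_1) Q(u)_v = \Pi(\gamma_2) Q(u)_v$. Right-$G$-equivariance of $\Pi(\gamma)$ then identifies $Q(u)_w$ with the right $H$-orbit $\Pi(\gamma) u \cdot H$ in $P_w$, so $Q(u) \to N$ is a principal $H$-subbundle of $P$. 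Invariance under $\Phi_t$ is immediate since $\Phi_t$ itself qualifies as a central path in $\mathcal{P}_{v, \varphi_t(v)}$; continuity of $Q(u)$ follows from the continuity of stable, unstable and central holonomies (Theorem \ref{thm_stable_hol}) once a continuous local selection $w \mapsto \gamma_w$ is made, again provided by local product structure.

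For uniqueness, any other continuous $\Phi_t$-invariant principal $H$-subbundle $Q'$ containing $u$ satisfies $Q'_v = u \cdot H = Q(u)_v$ because its fibers are right $H$-orbits. For $w \in W^s(v)$ the identity $Q'_w = \Phi_{-t}(Q'_{\varphi_t w})$ holds for every $t \geq 0$; since $d(\varphi_t v, \varphi_t w) \to 0$ and $Q'$ is continuous, the right-hand side converges to the stable-holonomy limit of Theorem \ref{thm_stable_hol}, namely $\Pi^s_{v \to w}(Q(u)_v) = Q(u)_w$. The analogous arguments on unstable leaves combined with $\Phi_t$-orbits propagate $Q' = Q(u)$ along any $\gamma \in \mathcal{P}_{v,w}$. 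The main obstacle I anticipate is establishing the continuity of $Q(u)$ as a genuine topological subbundle: one must combine continuous local choices of paths with the uniform holonomy bound \eqref{eq:unif_bound_hol_tp} to show that the fiber assignment $w \mapsto Q(u)_w$ depends continuously on $w$ in the appropriate fiber-space sense, and to justify that the limiting identification used in the uniqueness argument genuinely reproduces the stable holonomy.
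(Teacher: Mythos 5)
Your construction is essentially the paper's: you transport the $H$-orbit of $u$ along center--stable--unstable paths, use path-independence from $\Pi_P(\mathcal{P}_v)\subset H$, get local triviality from the local product structure (Lemma \ref{lem_exist_csu}) together with continuity of the holonomies, and note that $\Phi_t$-invariance is automatic since the flow is a central holonomy, which is exactly how the paper builds $Q(u)$ as the closure of the holonomy orbit of $u$. Your uniqueness step is in fact more detailed than the paper's one-line appeal to holonomy invariance determining the fibers; it is sound in the isometric (compact fiber) setting, and, as you yourself flag, in the general fiber-bunched setting the limiting identification with $\Pi^s_{v\to w}$ needs the quantitative control of \eqref{eq:unif_bound_hol_tp} to offset the fiberwise growth of $\Phi_{-t}$ --- the same implicit point the paper glosses over.
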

\begin{proof}
Fix some base point $u \in P, v = \pi(u)$. The associated holonomy bundle is 
$$Q(u) = \overline{\{\Pi_\gamma u, \gamma \in \mathcal{P}_{v, w}, w \in N\}} \subset P.$$
In particular, $Q(u)$ is $\Phi_t = \Pi^c$-invariant. The existence of paths in $\mathcal{P}_{v, w}$ for any $w$ tells us that the projection $\pi: P \to N$ restricts to a surjection $Q(u) \to N$. By definition, the fiber $Q_v(u) = \overline{\Hol}^{\dyn}(P, \Phi_t, v)$ is acted on simply transitively by $H$, and conjugacy by any holonomy $\Pi_\gamma \in \Pi(\mathcal{P}_{v, w})$ gives an $H$-equivariant homeomorphism between the fibers $Q_v(u) \cong Q_w(u)$. 

Finally, let $(U, \psi)$ be a local chart of $N$ around $v$. By the local product structure theorem (and shrinking $U$ if necessary) we obtain a continuous family $\gamma_{v, w}$ of stable-center-unstable paths from $v$ to $w$ for any $w \in U$. Then the continuity of holonomies with respects to the base point tells us that $(x, h) \in U \times H \mapsto \Pi_{v \to \psi(x)} u \cdot h$ defines a continuous local trivialization of $Q(u)$ at $v$.

This shows the existence of $Q(u)$, and the uniqueness comes from the holonomy invariance, which determines every fiber.
\end{proof}

Similarly, one defines a \emph{restricted full holonomy group} $\overline{\Hol}_0^{\dyn}(P, \Phi_t, v)$ by considering only paths in $\mathcal{P}_v$ which are homotopic to a point. This is a subgroup of $\overline{\Hol}^{\dyn}(P, \Phi_t, v)$. To investigate its nature, we need the following technical lemma:
\begin{lem} \label{lem_exist_csu}
Let $v, w \in N$.
\begin{enumerate}
\item If $\gamma: [0, 1] \to N$ is any continuous path, then $\gamma$ is homotopic to a path in $\mathcal{P}_{v, w}$. 
\item Let $\gamma_0, \gamma_1$ be two paths in $\mathcal{P}_{v, w}$ which are homotopic. Then, the homotopy $\gamma_t$ can be chosen so that $\gamma_t \in \mathcal{P}_{v, w}$ for every $t \in [0, 1]$.
\end{enumerate} 
\end{lem}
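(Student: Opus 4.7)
The proof will rely fundamentally on the local product structure of the partially hyperbolic flow $\varphi_t$ together with a standard Lebesgue-number subdivision argument. The main tool I would set up first is the following consequence of the local product structure: for every $v \in N$ there is an open neighborhood $U_v$ and a continuous assignment $w \mapsto \alpha_{v,w}$ with $\alpha_{v,w} \in \mathcal{P}_{v,w}$ for $w \in U_v$, such that $\alpha_{v,v}$ is the constant path and such that $U_v$ is simply connected (e.g. a geodesic ball). By continuity of the foliations $W^s$, $W^c$, $W^u$ this is available on a finite cover $\{U_{v_i}\}$ of the compact manifold $N$; moreover, on each $U_{v_i}$ the map $(w_1,w_2) \mapsto \alpha_{v_i,w_1}^{-1} \cdot \alpha_{v_i,w_2}$ supplies a continuous short path in $\mathcal{P}_{w_1,w_2}$ between any two close points.

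For item (1), given a continuous path $\gamma:[0,1]\to N$ with $\gamma(0)=v$, $\gamma(1)=w$, I take a Lebesgue number $\delta>0$ for the cover $\{U_{v_i}\}$ applied to $\gamma([0,1])$ and subdivide $0=t_0<t_1<\dots<t_k=1$ so that each arc $\gamma([t_{j-1},t_j])$ lies in some $U_{v_{i(j)}}$. Setting $w_j=\gamma(t_j)$, I replace each arc by the canonical $\mathcal{P}$-path $\beta_j=\alpha_{v_{i(j)},w_{j-1}}^{-1}\cdot \alpha_{v_{i(j)},w_j}$. The concatenation $\beta_1\cdots\beta_k$ lies in $\mathcal{P}_{v,w}$, and on each subinterval the arc $\gamma|_{[t_{j-1},t_j]}$ is homotopic rel endpoints to $\beta_j$ inside the simply-connected neighborhood $U_{v_{i(j)}}$; concatenating these homotopies yields the desired homotopy from $\gamma$ to an element of $\mathcal{P}_{v,w}$.

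For item (2), I extend this scheme to the homotopy $H:[0,1]^2\to N$ with $H(\cdot,0)=\gamma_0$, $H(\cdot,1)=\gamma_1$ in $\mathcal{P}_{v,w}$. Choose a grid subdivision $\{s_a\}_a,\{t_b\}_b$ of $[0,1]^2$ fine enough (again via a Lebesgue number for $\{U_{v_i}\}$ applied to $H([0,1]^2)$) that the image of each small square lies in a single $U_{v_{i(a,b)}}$, and refine the subdivision along the edge $t=0$ and $t=1$ so that it includes the breakpoints of $\gamma_0$ and $\gamma_1$ between their $s$, $c$, $u$ pieces. For every grid vertex $p_{a,b}=H(s_a,t_b)$ I define the canonical $\mathcal{P}$-path between horizontally adjacent vertices via the local product structure of $U_{v_{i(a,b)}}$; horizontal concatenation at height $t_b$ produces a path $\widetilde{\gamma}_b\in\mathcal{P}_{v,w}$, with $\widetilde{\gamma}_0=\gamma_0$ and $\widetilde{\gamma}_1=\gamma_1$ (because the breakpoints were included). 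It remains to connect $\widetilde{\gamma}_b$ to $\widetilde{\gamma}_{b+1}$ by a homotopy through $\mathcal{P}_{v,w}$; this is done square by square, within each simply-connected neighborhood, by continuously deforming the two $\mathcal{P}$-arcs at the top and bottom of the square using the canonical-path assignment $\alpha$, which remains in $\mathcal{P}$ throughout the deformation.

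The main obstacle is the second step: filling each small square of the grid by a homotopy that does not leave $\mathcal{P}$. The delicate point is that the composition of two short $\mathcal{P}$-paths with a common endpoint is still in $\mathcal{P}$, but an arbitrary continuous deformation inside $U_v$ may leave the foliation charts; this is why one must build the filling explicitly from the product-structure coordinates rather than from an abstract contraction. Once this \emph{cellular} $\mathcal{P}$-homotopy is established on a single square, gluing across the grid gives the required homotopy $\gamma_t\in\mathcal{P}_{v,w}$ for all $t\in[0,1]$.
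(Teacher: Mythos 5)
Your argument for item (1) is fine and is essentially the paper's: subdivide by a Lebesgue number, replace each small arc by a local csu path, and homotope arc by arc inside small (simply connected) product-structure neighborhoods. For item (2), however, the proposal has a genuine gap exactly at the point you yourself flag as the main obstacle, and two of its intermediate claims are not correct as stated. First, it is not true that $\widetilde{\gamma}_0 = \gamma_0$ and $\widetilde{\gamma}_1 = \gamma_1$: your replacement paths between adjacent grid vertices pass through the chart centers $v_{i(a,b)}$, so even if the subdivision contains the breakpoints of $\gamma_0$, the edge-path $\widetilde{\gamma}_0$ is a different element of $\mathcal{P}_{v,w}$. You therefore need an additional homotopy \emph{through} $\mathcal{P}_{v,w}$ from $\gamma_0$ to its replacement, and the simply-connectedness homotopy you used in item (1) does not provide this, since its intermediate paths leave $\mathcal{P}$. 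Second, and more seriously, the square-filling step --- connecting $\widetilde{\gamma}_b$ to $\widetilde{\gamma}_{b+1}$ by a homotopy all of whose stages lie in $\mathcal{P}_{v,w}$ --- is only asserted (``by continuously deforming the two $\mathcal{P}$-arcs \dots which remains in $\mathcal{P}$ throughout the deformation''). Deforming one $\mathcal{P}$-arc into another $\mathcal{P}$-arc with the same endpoints, through $\mathcal{P}$-arcs, is precisely the nontrivial content of item (2); nothing in your setup (chart-center paths, simple connectivity) produces such a deformation, because a generic homotopy inside $U_{v_i}$ has intermediate paths that are not concatenations of stable/central/unstable pieces.

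The paper closes this gap by exploiting a feature your construction discards: inside a set of diameter less than $\rho$, the local product structure gives a \emph{unique} csu path (stable, then flowline, then unstable) with prescribed endpoints, depending continuously on those endpoints. The paper first homotopes any $\gamma \in \mathcal{P}_{v,w}$ to a concatenation of such canonical short csu paths by a ``sliding'' homotopy $\gamma_t = (\text{canonical csu path from } \gamma(0) \text{ to } \gamma(t)) \cdot \gamma\vert_{[t,1]}$, which stays in $\mathcal{P}_{v,w}$ because the tail of a csu concatenation is still one (this is the step that fixes your endpoint problem). Then, on each small square of the subdivided homotopy, it replaces the family of restricted paths $\gamma_t^{i,j}$ by the family of canonical csu paths $\alpha_t^{i,j}$ with the same endpoints; uniqueness makes this family well defined and continuity of the product-structure coordinates makes it jointly continuous in $(s,t)$, so the square filling comes for free rather than having to be constructed by hand. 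If you replace your chart-center paths $\alpha_{v_i,w_1}^{-1}\cdot\alpha_{v_i,w_2}$ by these endpoint-determined canonical csu paths and add the sliding homotopy, your outline becomes a complete proof; as written, the key deformation is missing.
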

\begin{proof}
Let $\rho > 0$ be the radius of a ball in which the local product structure theorem can be applied at any point of $N$ (in particular, we assume that $\rho$ is less than the injectivity radius of $N$). The proof of (2) is in two steps: first, we show that any path in $\mathcal{P}_{v, w}$ can be chosen homotopic to a path in $\mathcal{P}_{v, w}$ which is concatenation of csu paths of diameter less than $\rho$; the same procedure applied to any path will give us (1). Then, we show that two such paths, when they are homotopic, can be chosen to be homotopic by a homotopy in $\mathcal{P}_{v, w}$ of the same kind.

Denote $\mathcal{P}_{v, w}^\rho$ the set of paths in $\mathcal{P}_{v, w}$ which are concatenations of paths of the form $\alpha = \alpha^s \circ \alpha^c \circ \alpha^u$ with $\alpha^s \subset W^s(\alpha(0)), \alpha^u \subset W^u(\alpha(1))$ and $\alpha^c$ a flowline, with $\alpha$ included in some subset of diameter less than $\rho$. Note that by the local product structure theorem, $\alpha$ is uniquely determined by its endpoints and the function $\alpha(0), \alpha(1) \mapsto \alpha$ is continuous on every set of diameter less than $\rho$.

\smallskip

We first show that any path $\gamma \in \mathcal{P}_{v, w}$ is homotopic in $\mathcal{P}_{v, w}$ to a path in $\mathcal{P}_{v, w}^\rho$. To do this, we may assume that $\gamma$ itself in contained in a ball $B$ of diameter less than $\rho$ by decomposing $\gamma$ into shorter paths. Then, consider the homotopy $\gamma_t$ defined by $\gamma_t(s) = \gamma(s)$ for $t \leq s \leq 1$ and $\gamma_t(s) = \alpha_t(s/t)$ where $\alpha_t$ is the unique csu path in $B$ with endpoints $\alpha_t(0) = \gamma(0), \alpha_t(1) = \gamma(t)$. The local product structure theorem tells us that $\gamma_t$ defined a homotopy, and by construction $\gamma_t \in \mathcal{P}_{v, w}$ and $\gamma_1 \in \mathcal{P}_{v, w}^\rho$.

As announced, notice that if $\gamma$ is any continuous path, then the same procedure applies and gives a homotopy between $\gamma$ and an element of $\mathcal{P}_{v, w}^\rho$.

\smallskip

We now show that if $\gamma_0, \gamma_1 \in \mathcal{P}_{v, w}^\rho$ are homotopic with homotopy $\gamma_t$, then there exists a homotopy $\alpha_t$ between $\gamma_0, \gamma_1$ such that $\alpha_t \in \mathcal{P}_{v, w}^\rho \forall t$. By uniform continuity of the homotopy $\gamma$, we may cut up our homotopy : $[0, 1]^2 \to N$ into small squares of side length $1 / m = \delta > 0$ such that the image of the restriction of the homotopy to each square $[i/m, (i+1)/m] \times [j/m, (j+1)/m]$ is covered by a ball of diameter less than $\rho$, giving us $m^2$ small homotopies. Let $\gamma_t^{i, j}(s) = \gamma_{(i+t)/m}((j+s)/m)$. Then, the previous construction gives us a homotopy between $\gamma^{i, j}_t$ and some path $\alpha^{i, j}_t \in \mathcal{P}_{\gamma_t^{i, j}(0), \gamma_t^{i, j}(1)}^\rho$. Moreover, the local product structure theorem tells us that the maps $s, t \mapsto \alpha_t^{i, j}(s)$ are continuous on their domains. As a result, patching together these $\alpha^{i, j}_t$ gives a homotopy $\alpha_t$ in $\mathcal{P}_{v, w}^\rho$ between $\gamma_0$ and $\gamma_1$: indeed, the local product structure theorem also applies at the junctions of the squares to ensure that the homotopy is continuous.
\end{proof}

As a result, the classical properties of the restricted holonomy group hold:
\begin{csq} \label{cor_restrict_hol}
The restricted dynamical holonomy group $\overline{\Hol}_0^{\dyn}(P, \Phi_t, v)$ is the connected component of $\overline{\Hol}^{\dyn}(P, \Phi_t, v)$. Thus, if $q:\tilde{N} \to N$ denotes the universal cover, $\widetilde{\Phi_t}$ is the lift of $\Phi_t$ to $\widetilde{N}$ and $\tilde{v}$ is a lift of $v$ to $\tilde{N}$, then the holonomy group $\overline{\Hol}^{\dyn}(q^*P, \widetilde{\Phi_t}, \tilde{v})$ defined by the lifts of holonomies of $N$ is connected.
\end{csq}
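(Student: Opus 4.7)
The plan is to adapt the classical proof that the restricted holonomy group of a linear connection equals the identity component of the full holonomy group, substituting Lemma \ref{lem_exist_csu} for the local path-connectedness arguments of the smooth setting. The corollary then reduces to two essentially independent facts: connectedness of $\overline{\Hol}_0^{\dyn}$, and a correspondence between contractible csu-loops on $N$ and arbitrary csu-loops on the universal cover $\tilde{N}$.

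\emph{Connectedness of $\overline{\Hol}_0^{\dyn}$.} Given $\gamma \in \mathcal{P}_{v,0}$, Lemma \ref{lem_exist_csu}(2) produces a continuous one-parameter family $(\gamma_s)_{s \in [0,1]}$ in $\mathcal{P}_v$ joining $\gamma$ to the constant loop at $v$. Each $\gamma_s$ is a concatenation of stable, central and unstable segments whose endpoints vary continuously with $s$; as each factor of the holonomy depends continuously on these endpoints (by Theorem \ref{thm_stable_hol} for the stable and unstable pieces, and by continuity of the flow $\Phi_t$ for the central pieces), the map $s \mapsto \Pi_P(\gamma_s)$ is a continuous path in $G$ from $\Pi_P(\gamma)$ to the identity. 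Hence $\Pi_P(\mathcal{P}_{v,0})$ is path-connected, so its closure $\overline{\Hol}_0^{\dyn}$ is connected.

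\emph{Identity component and universal cover.} To prove that $\overline{\Hol}_0^{\dyn}$ is the full identity component, one also shows that it is open in $\overline{\Hol}^{\dyn}$: csu-loops contained in a contractible normal neighborhood $U$ of $v$ (of diameter less than the local product structure scale from the proof of Lemma \ref{lem_exist_csu}) are csu-contractible inside $U$, and by the uniform estimate \eqref{eq:unif_bound_hol_tp} together with the explicit parametrization of such small csu-loops by their endpoints, the induced holonomies fill a neighborhood of $1 \in \overline{\Hol}^{\dyn}$. Being connected, open and closed in the closed Lie subgroup $\overline{\Hol}^{\dyn} \subset G$, the group $\overline{\Hol}_0^{\dyn}$ must coincide with the identity component. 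For the universal cover statement, any contractible csu-loop on $N$ lifts uniquely to a csu-loop on $\tilde{N}$, and conversely every loop on $\tilde{N}$ projects to a contractible loop on $N$; since the csu-structure and the flow $\widetilde{\Phi_t}$ on $q^*P$ are pullbacks of those on $P$, the associated dynamical holonomies agree under the canonical identification of fibers. Thus $\overline{\Hol}^{\dyn}(q^*P, \widetilde{\Phi_t}, \tilde{v}) = \overline{\Hol}_0^{\dyn}(P, \Phi_t, v)$, which is connected by the previous paragraph.

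The main obstacle is the openness of $\overline{\Hol}_0^{\dyn}$ in $\overline{\Hol}^{\dyn}$: one must ensure that holonomies of csu-loops in a small contractible neighborhood of $v$ exhaust a neighborhood of the identity not only inside the bare image $\Pi_P(\mathcal{P}_v)$ but inside its closure $\overline{\Hol}^{\dyn}$. This calls for leveraging Theorem \ref{thm_stable_hol} and the local product structure to parametrize small holonomies explicitly enough; by contrast, the connectedness step and the universal cover identification are essentially formal once the continuity of dynamical holonomies along csu-homotopies is in hand.
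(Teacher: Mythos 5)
Your first step and your universal-cover identification are essentially the paper's own argument: Lemma \ref{lem_exist_csu}(2), combined with the continuity of the stable/unstable/central holonomies in their endpoints (Theorem \ref{thm_stable_hol}), shows that $\Pi_P(\mathcal{P}_{v,0})$ is path-connected, hence its closure $\overline{\Hol}_0^{\dyn}(P,\Phi_t,v)$ is connected; and since every csu loop on $\widetilde{N}$ is null-homotopic and the lifted flow and holonomies are pullbacks, the lifted holonomy group is identified with $\overline{\Hol}_0^{\dyn}(P,\Phi_t,v)$. Up to this point there is nothing to change.

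The gap is exactly where you flagged it, and it cannot be closed along the lines you propose. The claim that holonomies of csu loops contained in a small contractible neighborhood of $v$ fill a neighborhood of $1$ in $\overline{\Hol}^{\dyn}$ is false in general: for a flat flow (Definition \ref{def:flat}, Section \ref{sec:flat}) the dynamical connection is flat, so \emph{every} null-homotopic csu loop has trivial holonomy, while $\overline{\Hol}^{\dyn}$ can be positive-dimensional — choosing a representation $\pi_1(N)\to G$ with dense image one even gets $\overline{\Hol}^{\dyn}=G$, as noted at the end of Section \ref{sec:flat}. Hence small loops may only ever produce the identity, and no parametrization via the local product structure can make them exhaust a neighborhood of $1$; note also that the estimate \eqref{eq:unif_bound_hol_tp} points in the opposite direction, saying that small csu loops have holonomy \emph{close} to the identity rather than covering a neighborhood of it. The paper's proof of the ``identity component'' half is therefore not local but global: as in classical holonomy theory there is a natural morphism $\mathcal{P}_v\to\pi_1(N)$ whose kernel is $\mathcal{P}_{v,0}$, so the quotient $\overline{\Hol}^{\dyn}/\overline{\Hol}_0^{\dyn}$ is discrete (its elements come from the countably many homotopy classes of loops), and a closed subgroup with discrete quotient contains the identity component. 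You should replace your openness-by-small-loops step with this countability argument; be aware that even there the delicate point is passing from the countable group $\Pi_P(\mathcal{P}_v)$ to its closure — the flat examples show this is where all the content lies — so any correct argument must go through $\pi_1(N)$ and cannot be purely local.
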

\begin{proof}
The lemma tells us that the group generated by elementary holonomies along null homotopic loops is path connected, so its closure $\overline{\Hol}_0^{\dyn}(P, \Phi_t, v)$ is connected; in particular it is a subset of the identity component of $\overline{\Hol}^{\dyn}(P, \Phi_t, v)$. Stating this result at the level of the universal cover, we obtain that $\mathcal{P}_{\tilde{v}}$ is path connected, hence $\overline{\Hol}_0^{\dyn}(q^*P, \widetilde{\Phi_t}, \tilde{v})$ is connected.

Then, to show that $\overline{\Hol}_0^{\dyn}(P, \Phi_t, v)$ is exactly the identity component, observe that as in the classical case there is a natural map $\mathcal{P}_v \to \pi_1(M)$. The kernel of this morphism is the set $\mathcal{P}_{v, 0}$ of csu paths which are homotopic to a point. Hence, $\overline{\Hol}^{\dyn} / \overline{\Hol}_0^{\dyn}$ is discrete, which implies that $\overline{\Hol}_0^{\dyn}$ contains the identity component of $\overline{\Hol}^{\dyn}$; this concludes the proof.
\end{proof}

\begin{rmq} \label{rmq_univ_cover}
For applications to the geodesic flow on a manifold $M$, we consider $N = SM$ where $SM$ is the unit tangent bundle of $M$. Do note that when $\dim M = n \geq 3$, there is a natural map $S\widetilde{M} \to SM$ which is an universal cover. Indeed, the fiber of $S\widetilde{M}$ is a sphere of dimension $n - 1$, hence is simply connected; this implies that $S\widetilde{M}$ is simply connected by the homotopy exact sequence. In particular, the flow on $\widetilde{SM}$ obtained by lifting the geodesic flow of $M$ is identified with the geodesic flow of $\widetilde{M}$.
\end{rmq}

We end this section with a useful characterization of the ergodicity of the flow $\Phi_t$ in terms of the holonomy group. 
\begin{thm}[{{\cite{lef2023isometric}}}] \label{thm_ergod_frame}
Assume that $G$ is compact, $\varphi_t$ preserves some smooth volume form $\mu$ and $\Phi_t$ is isometric. The group $H = \overline{\Hol}^{\dyn}(P)$ is equal to $G$ if and only if $\Phi_t$ is ergodic with respects to $\mu$.
\end{thm}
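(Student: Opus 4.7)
The proof splits into two implications which I would handle separately.

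\emph{Direction 1: $\Phi_t$ ergodic $\Rightarrow H = G$.} I would argue contrapositively. If $H \subsetneq G$ is a proper closed subgroup, Proposition \ref{pps_hol_bdle} supplies a continuous $\Phi_t$-invariant principal $H$-subbundle $Q \subset P$. Every $p \in P_v$ decomposes uniquely as $p = q \cdot g$ with $q \in Q_v$ and $g \in G$ determined modulo left multiplication by $H$, so one obtains a well-defined continuous map $\bar{\pi} : P \to H \backslash G$. Since $Q$ is $\Phi_t$-invariant and $\Phi_t$ commutes with the right $G$-action, $\bar{\pi}$ is $\Phi_t$-invariant. Composing with any non-constant continuous function on $H \backslash G$ (which exists because $H \ne G$) then produces a non-constant $\Phi_t$-invariant continuous function on $P$, contradicting ergodicity with respect to $\mu \otimes dg$.

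\emph{Direction 2: $H = G \Rightarrow \Phi_t$ ergodic.} The plan is fiberwise Peter--Weyl. For any $\Phi_t$-invariant $f \in L^2(P, \mu \otimes dg)$, Peter--Weyl produces an orthogonal $\Phi_t$-equivariant decomposition
\[
L^2(P, \mu \otimes dg) \;\cong\; \bigoplus_{\rho \in \hat{G}} L^2(N, E_\rho) \otimes V_\rho^*, \qquad E_\rho := P \times_\rho V_\rho,
\]
in which $\Phi_t$ acts only on the $L^2(N, E_\rho)$ factor via the extension parallel transport. Hence $f$ decomposes into $\Phi_t$-invariant $L^2$-sections $s_\rho$ of the associated bundles $E_\rho$; for the trivial representation $s_\rho$ is a $\varphi_t$-invariant function on $N$, which is constant by Anosov's theorem applied to the smooth volume-preserving Anosov flow $\varphi_t$. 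It therefore suffices to prove $s_\rho \equiv 0$ for every non-trivial irreducible $\rho$.

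\emph{Core step.} For such $\rho$, I would upgrade $s_\rho$ to a continuous section invariant under the dynamical holonomies. Since $\Phi_t$ is isometric on each $E_\rho$, the fiberwise norm $\|s_\rho\|_v$ is $\varphi_t$-invariant a.e., hence essentially constant, after which a standard smoothing along the flow yields a continuous representative. With continuity in hand, the defining formula $\Pi^s_{v \to w} = \lim_{t \to + \infty} \Phi_{-t} \circ \tau^D \circ \Phi_t$ from Theorem \ref{thm_stable_hol} together with the isometric action of $\Phi_t$ on $E_\rho$ forces $s_\rho(w) = \Pi^s_{v \to w} s_\rho(v)$ whenever $w \in W^s(v)$, and analogously for $\Pi^u$ and $\Pi^c = \Phi_t$. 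Thus $s_\rho(v) \in (E_\rho)_v \cong V_\rho$ is fixed by the full dynamical holonomy group acting via $\rho$, that is, by $\rho(H) = \rho(G)$. Schur's lemma applied to the non-trivial irreducible $\rho$ gives $s_\rho(v) = 0$, so $f$ is constant and $\Phi_t$ is ergodic.

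The main obstacle is the regularity upgrade in the core step: proving rigorously that an $L^2$-invariant section of the isometric extension $E_\rho$ admits a continuous representative which is invariant under every stable and unstable holonomy. This is the one genuinely analytic ingredient, and it is exactly where the isometric hypothesis on $\Phi_t$ is essential, since it keeps the factors $\Phi_{-t}$ in the defining limit for $\Pi^s$ bounded uniformly as $t \to +\infty$.
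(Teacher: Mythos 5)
The paper does not prove this statement at all: it is imported verbatim from the cited reference \cite{lef2023isometric}, so there is no internal proof to compare against, and your proposal has to be judged against the standard argument in the literature. Your Direction 1 is correct and complete in outline: Proposition \ref{pps_hol_bdle} gives the continuous $\Phi_t$-invariant $H$-subbundle $Q$, the induced map $P \to H\backslash G$ is well defined, continuous and $\Phi_t$-invariant, and since $\mu \otimes dg$ has full support a non-constant continuous invariant function contradicts ergodicity. The architecture of Direction 2 (fiberwise Peter--Weyl, ergodicity of the volume-preserving Anosov base flow for the trivial isotype, holonomy-invariance plus $H=G$ and irreducibility to kill the non-trivial isotypes) is also exactly the classical Brin/Lefeuvre scheme.

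The genuine gap is the one you flag yourself, and your proposed fix for it does not work. Knowing that $\|s_\rho\|_v$ is essentially constant says nothing about the behaviour of the \emph{direction} of $s_\rho(v)$ in the fiber, and ``smoothing along the flow'' is vacuous here: averaging an already $\Phi_t$-invariant section along flow lines returns the same section and produces no regularity transverse to the flow, so it cannot yield a continuous representative. The step that actually carries the theorem is to show that a merely measurable ($L^2$) invariant section is \emph{almost everywhere invariant under the stable and unstable holonomies}; this is where the absolute continuity of the foliations $W^s, W^u$ and a Hopf-type argument (or, equivalently, an invariance-principle/regularity argument of the kind developed in the cited reference) must enter, crucially using that $\Phi_t$ is isometric on fibers so that no fiberwise expansion spoils the comparison along stable leaves. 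Only after that a.e.\ holonomy invariance is established does the local product structure (Lemma \ref{lem_exist_csu}) upgrade $s_\rho$ to a continuous section invariant under all of $\Pi^s, \Pi^u, \Pi^c$; your computation $s_\rho(w) = \Pi^s_{v\to w}s_\rho(v)$ is fine \emph{once} continuity is known, but continuity is precisely what has not been obtained. As it stands, the conclusion that $s_\rho(v)$ is a $\rho(G)$-fixed vector, and hence zero, is unsupported, so Direction 2 is incomplete at its central analytic step.
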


\begin{ex}[The geodesic flow on real hyperbolic manifolds, continued] \label{ex:hyp_geod_group}

We have seen that dynamical holonomies on the stable/unstable bundles of real hyperbolic manifolds are scalar multiples of Levi-Civita holonomies on $SM$ in Example \ref{ex:hyp_geod_hol}. This shows that the associated $H^{E_u} = \overline{\Hol}^{\dyn}(d\phi_t)$ preserves the conformal class of the Sasaki metric. 

However, keeping track of the multiplicative factor is difficult; it is more convenient to study the image $PH^{E_u}$ of this group in the projective general linear group $\PGL(E_u)$. 

On one hand, it is known that the Levi-Civita holonomy group of the real hyperbolic space $\R\HH^n= \SO(n) \backslash \SO(1, n)$ is isomorphic to $\SO(n)$ (this follows from the classical theory of symmetric spaces, see for instance \cite[Proposition 10.79]{besse1987einstein}). As a result, if $M$ is a discrete quotient of $\R \mathbb{H}^n$, then its Levi-Civita holonomy group is equal to $\SO(n)$ or $\Or(n)$ (depending on whether $M$ is orientable or not). Then, holonomies on $SM$ for the pullback connection $p^*\nabla^{LC}$ (with $p: SM \to M$ the projection) can be seen as restrictions of holonomies on $M$ to hyperplanes; hence the holonomy group of $p^*\nabla^{LC}$ is equal to $\SO(n-1)$.

On the other hand, it follows from Lemma \ref{lem_exist_csu} that central/stable/unstable paths are dense in each homotopy class. As a result, the identity component $PH^{E_u}_0$ is isomorphic to $\PSO(n-1)$.
\end{ex}

\subsection{A connection for isometric extensions} \label{sec:hol_kanai}

Following \cite{kanai1993differential}, let $G$ be a compact Lie group and $(P, \Phi_t)$ be a principal $G$-extension of some Anosov flow $\varphi_t$ on a closed manifold $N$. Assume that there exists an auxiliary connection $D$ on $P$ and an adapted right-invariant metric $g^P$ such that $\Phi_t$ acts isometrically between fibers. Then, the flow $\Phi_t$ is partially hyperbolic and there is a decomposition 
$$TP = E_s^P \oplus E_c^P \oplus E_u^P$$
where $E_{s/u}^P$ are the stable/unstable bundles of the flow $\Phi_t$ and $E_c^P$ is the central bundle. The latter splits as $E_c^P = \R X^P \oplus \mathbb{V}$ where $X^P$ is the generator of $\Phi_t$ and $\mathbb{V} = \ker d\pi$ is the vertical bundle. Moreover, the projection $d\pi: TP \to TN$ projects $X^P$ on $X^N$ and $E_{s/u}^P$ onto $E_{s/u}^N$.

As a result, there is an obvious choice of connection on $P$ such that holonomies along stable/unstable/central paths are flat: the horizontal space should be 
$$\mathbb{H} = E_s^P \oplus E_u^P \oplus \R X^P.$$
We shall denote $\Theta$ the associated connection form, which is defined by $\Theta(Y) = 0$ for $Y$ a section of $\HH$ and $\Theta(A^*) = A$ where $A \in \frk{g}$ and $A^* = \partial_t \vert_{t = 0} \exp(tA)$ is the associated \emph{fundamental vector field}; note that fundamental vector fields generate the vertical bundle $\mathbb{V}$ so that $\Theta$ is uniquely defined.

\begin{lem} \label{lem_dyn_connection} Let $\Phi_t: P \to P$ be any partially hyperbolic principal extension of some Anosov flow $\varphi_t: N \to N$. The form $\Theta$ on $P$ defines a continuous $G$-principal connection on $P$, which is $\Phi_t$-invariant.
\end{lem}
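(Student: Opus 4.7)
The plan is to verify the two defining axioms of a $G$-principal connection form ($G$-equivariance and $\Theta(A^*)=A$), together with the continuity and $\Phi_t$-invariance, by reducing everything to the statement that the dynamical decomposition $TP = E_s^P \oplus \R X^P \oplus E_u^P \oplus \mathbb{V}$ is preserved both by the right action of $G$ and by $d\Phi_t$. The key observation is that the partially hyperbolic splitting is characterized purely \emph{dynamically} (by contraction/expansion rates of $d\Phi_t$), while the right action of $G$ commutes with $\Phi_t$ and is isometric for the right-invariant metric $g^P$. This rigidity is what forces $R_g$ and $\Phi_t$ to preserve $\HH$.

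The first step is to establish the $G$-invariance of $\HH$. Since $\Phi_t \circ R_g = R_g \circ \Phi_t$, one has $d\Phi_t \circ dR_g = dR_g \circ d\Phi_t$; because $g^P$ is right-invariant, $dR_g$ is an isometry, so for any $v \in E_s^P$ the vector $dR_g v$ satisfies the same contraction estimate $\|d\Phi_t dR_g v\| = \|d\Phi_t v\|$ and therefore lies in $E_s^P$ at the translated point. The same argument (applied to $R_{g^{-1}}$) shows $dR_g E_s^P = E_s^P$, and identically $dR_g E_u^P = E_u^P$. For the flow direction, since $X^P$ is the generator of $\Phi_t$, the commutation $\Phi_t \circ R_g = R_g \circ \Phi_t$ gives $dR_g X^P = X^P$; equivalently $X^P$ is basic. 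Thus $dR_g \HH = \HH$.

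The second step is to deduce that $\Theta$ is a principal connection form. Because $TP = \HH \oplus \mathbb{V}$ as a direct sum (the vertical distribution $\mathbb{V}$ lies in $E_c^P$ and, together with $\R X^P$, exhausts it), $\Theta$ is well defined as the $\frk{g}$-valued $1$-form that kills $\HH$ and returns $A$ on the fundamental vector field $A^*$; the identification $\mathbb{V} \cong \frk{g}$ via $A \mapsto A^*$ is an isomorphism on each fiber. The axiom $\Theta(A^*) = A$ is then tautological. For the equivariance $R_g^* \Theta = \Ad(g^{-1})\Theta$, one checks it separately on $\HH$ and on $\mathbb{V}$: on $\HH$ both sides vanish by the $G$-invariance of $\HH$ proved above, and on $\mathbb{V}$ it reduces to the standard identity $dR_g(A^*) = (\Ad(g^{-1})A)^*$, which holds for any principal $G$-bundle. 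Continuity is inherited from the stable/unstable manifold theorem, which only provides continuous regularity of $E_s^P, E_u^P$ for a general partially hyperbolic flow, while $\R X^P$ and $\mathbb{V}$ are smooth; hence the projection onto $\mathbb{V}$ parallel to $\HH$ is continuous.

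Finally, the $\Phi_t$-invariance is immediate from the construction: by definition of the dynamical splitting, $d\Phi_t$ preserves $E_s^P$, $E_u^P$, and $\R X^P$, and it preserves $\mathbb{V}$ because $\Phi_t$ sends fibers to fibers ($\pi \circ \Phi_t = \varphi_t \circ \pi$). Hence $d\Phi_t \HH = \HH$, and the pullback $\Phi_t^* \Theta$ agrees with $\Theta$ on $\HH$ (both zero) and on $\mathbb{V}$ (both send $A^*$ to $A$, using that fundamental vector fields are $\Phi_t$-related to themselves because $\Phi_t$ commutes with $R_{\exp(sA)}$). The only mild subtlety, which I would flag as the main point requiring care, is the argument that $dR_g$ preserves the stable/unstable bundles: this relies on the fact that the right-invariant metric $g^P$ makes $R_g$ a fiberwise isometry so that the asymptotic contraction/expansion rates are genuinely invariant under right translation — without this invariance of the metric the dynamical definition of $E_s^P, E_u^P$ would not transparently yield their $G$-invariance.
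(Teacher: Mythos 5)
Your proof is correct and follows essentially the same route as the paper: the $G$-invariance of $\HH$ is deduced from $\Phi_t \circ R_g = R_g \circ \Phi_t$ together with the dynamical characterization of $E_s^P, E_u^P$ and $X^P$, and the $\Phi_t$-invariance of $\Theta$ from the fact that fundamental vector fields are $\Phi_t$-related to themselves. Your only extra ingredient, the right-invariant metric making $dR_g$ an isometry, is available in the paper's setting but not strictly needed, since on the compact total space the stable/unstable bundles are metric-independent and $dR_g$ has uniformly bounded norm, so the exponential rates are preserved in any case.
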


\begin{proof}
Let $a \in G$ and denote $R_a$ the right action of $a$ on $P$, we have to show that $(R_a)_* \mathbb{H} \subset \mathbb{H}$. We have, as $\Phi_t$ is a principal extension:
$$\Phi_t \circ R_a = R_a \circ \Phi_t.$$
It is now clear that $R_a$ preserves $X^P$, and also that $R_a$ will also preserve the set of tangent vectors which are expanding/contracting under the action of $\Phi_t$, i.e. $E_{s/u}^P$. This shows that $(R_a)_* \mathbb{H} \subset \mathbb{H}$. 

The fact that $\mathbb{H}$ is $\Phi_t$-invariant is clear from the definition of $\mathbb{H}$. Let $A \in \frk{g}$ and $A^*$ be the associated fundamental vector field. If $f$ is a smooth function on $P$, then
$$\Phi_{t*} A^* f = \left . \frac{d}{ds} \right \vert_{s = 0} f \circ \Phi_t \circ R_{\exp(sA)} = \left . \frac{d}{ds} \right \vert_{s = 0} f \circ R_{\exp(sA)} \circ \Phi_t = A^* f \circ \Phi_t.$$
As a result, 
$$\Phi_t^* \Theta(A^*)(v) = \Theta_{\varphi_t(v)}(\Phi_{t*} A^*(v)) = \Theta(A^*) (\Phi_t(v))$$
or in other words $\Phi_t^* \Theta = \Theta$.
\end{proof}

\begin{rmq}
Alternatively, one can obtain the existence (and uniqueness) of such a flow-invariant connection using a fixed point argument, see \cite[Proposition 4.2]{kanai1993differential}.
\end{rmq}

One can then define the parallel transport on $P$ with respect to $\Theta$. Indeed, parallel transport is the solution to some \emph{linear} ordinary differential equation with continuous coefficients, so is well-defined. 

\begin{df}[Classical holonomy group] \label{def:classical}
Let $v \in N$ be a base point and identify the fiber $P_v$ with $G$. We denote $\Hol^{\dyn}(P, \Phi_t, v) \subset G$ the holonomy group associated to the connection form $\Theta$.
\end{df}

\begin{rmq} \label{rmq_classical}
If the connection form $\Theta$ is $\mathcal{C}^1$, then the classical theory for connections applies (see in particular \cite[Vol 1, Ch 1, Thm 4.2, Thm 7.1]{kobayashi1996foundations}). The identity component of the classical holonomy group $D = \Hol^{\dyn}(P, \Phi_t, v)$ is a closed subgroup of $G$. Moreover, $D$ has at most countably many components. Finally, there exists holonomy bundles $Q \subset P$ which are reductions of the structure group $G$ of $P$ to $D$. Such holonomy bundles are invariant under holonomy, hence under the action of $\Phi_t$ by construction; hence they define a reduction of $(P, \Phi_t)$ (see Section \ref{sec:ext_flow}).
\end{rmq}

It remains to see that these holonomies are the same as the ones defined in the previous section.
\begin{lem}
Let $v \in N$, $w \in W^s(v)$ close to $v$. Then the stable holonomy $\Pi^s_{v \to w}$ is the same as the parallel transport along any path from $v$ to $w$ contained in $W^s(v)$ with respect to $\Theta$. 
\end{lem}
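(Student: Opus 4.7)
The plan is to prove that both $\Pi^s_{v \to w}u$ and $\tau^\Theta_{v \to w}u$ coincide with the unique element of $W^s_P(u) \cap P_w$, where $W^s_P(u)$ denotes the stable leaf through $u$ for the flow $\Phi_t$ on $P$. This will simultaneously establish the equality and confirm that $\tau^\Theta$ is path-independent among paths in $W^s(v)$.

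The first observation is that, since $\Phi_t$ is partially hyperbolic on $P$, the stable bundle $E_s^P$ integrates to a stable foliation with smooth leaves. Because $\mathbb{V} \subset E_c^P$, we have $E_s^P \cap \mathbb{V} = \{0\}$, so the projection $\pi$ restricts to a local homeomorphism $W^s_P(u) \to W^s(v)$. For $w$ sufficiently close to $v$, the intersection $W^s_P(u) \cap P_w$ therefore reduces to a single point close to $u$.

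The second step is to show that the $\Theta$-horizontal lift of a path $\gamma \subset W^s(v)$ starting at $u$ stays inside $W^s_P(u)$. The key algebraic fact is that $d\pi$ restricts to three isomorphisms $\R X^P \to \R X^N$, $E_s^P \to E_s^N$, and $E_u^P \to E_u^N$, since each summand of $\HH$ meets $\mathbb{V} = \ker d\pi$ trivially. Hence any $\xi \in \HH$ with $d\pi(\xi) \in E_s^N$ must lie in $E_s^P$. Applied to the velocity of the horizontal lift $\tilde\gamma$, this gives $\tilde\gamma' \in E_s^P$, so $\tilde\gamma$ is contained in the leaf $W^s_P(u)$ and $\tau^\Theta_{v \to w}u \in W^s_P(u) \cap P_w$.

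The third, and most delicate, step is to verify that $\Pi^s_{v \to w}u$ also lies in $W^s_P(u)$, i.e.\ that $d_P(\Phi_t u, \Phi_t \Pi^s_{v \to w}u) \to 0$ as $t \to +\infty$. Using the equivariance $\Phi_t \circ \Pi^s_{v \to w} = \Pi^s_{\varphi_t v \to \varphi_t w} \circ \Phi_t$ together with the principal-bundle version of the uniform bound \eqref{eq:unif_bound_hol_tp} (obtained by restricting the linear estimate along the embedding $P \hookrightarrow F_{\GL}(E)$ used to transfer the holonomies), the point $\Phi_t \Pi^s_{v \to w}u$ stays within a constant multiple of $d_s(\varphi_t v, \varphi_t w)$ of $\tau^D_{\varphi_t v \to \varphi_t w}\Phi_t u$, which itself converges to $\Phi_t u$ exponentially since the endpoints collapse exponentially and $\tau^D$ is continuous. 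The uniqueness from the first step then forces $\Pi^s_{v \to w}u = \tau^\Theta_{v \to w}u$. I expect the main technical obstacle to be formalising this last step, in particular confirming that the fiberwise estimate \eqref{eq:unif_bound_hol_tp} survives the passage to the principal bundle in the form required here; the other two steps reduce to direct consequences of the stable manifold theorem and of the defining decomposition of $\HH$.
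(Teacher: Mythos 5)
Your overall strategy works and is close in spirit to the paper's, but it is organized around a uniqueness argument that, as written, has one soft spot. The paper's proof shares your Step 2 verbatim (a path in $W^s(v)$ lifts $\Theta$-horizontally to a path tangent to $E_s^P$, so $u' \coloneq \tau^\Theta_{v\to w}u$ satisfies $d(\Phi_t u,\Phi_t u')\to 0$), but then identifies $u'$ with $\Pi^s_{v\to w}u$ \emph{directly}: since $g^P$ is adapted to the auxiliary connection $D$, for $t$ large the distance splits orthogonally as $d(\Phi_t u,\Phi_t u')^2 = d(\Phi_t u,\tau^D\Phi_t u)^2 + d(\tau^D\Phi_t u,\Phi_t u')^2$, whence $d(\tau^D_{\varphi_t v\to\varphi_t w}\Phi_t u,\Phi_t u')\to 0$, and the isometric action of $\Phi_t$ on fibers then gives $u'=\lim_{t\to+\infty}\Phi_{-t}\circ\tau^D\circ\Phi_t u=\Pi^s_{v\to w}u$. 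This needs neither the stable manifold theorem for $\Phi_t$ on $P$, nor the transfer of \eqref{eq:unif_bound_hol_tp} to $P$, nor any uniqueness-of-intersection statement.

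The soft spot in your version is the ``i.e.'' in Step 3: for a partially hyperbolic flow, membership of a point of $P_w$ in the strong stable leaf $W^s_P(u)$ is not tautologically the same as $d_P(\Phi_t u,\Phi_t\,\cdot\,)\to 0$, and your uniqueness statement from Step 1 only concerns points already known to lie on the leaf (and in the same local plaque as $u$). What closes this — and what your argument never invokes — is precisely the isometric action on the fibers: if $u''_1,u''_2\in P_w$ both have $d(\Phi_t u,\Phi_t u''_i)\to 0$, then $d(\Phi_t u''_1,\Phi_t u''_2)\to 0$, while their fiberwise distance is constant in $t$ and, by compactness of $P$ and $G$, comparable to the ambient distance; hence $u''_1=u''_2$. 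Applying this to $\tau^\Theta_{v\to w}u$ (on the leaf, hence with converging orbit distance) and to $\Pi^s_{v\to w}u$ (converging orbit distance by your Step 3 estimate, which is fine and does survive the restriction $P\hookrightarrow F_{\GL}(E)$) finishes the proof and makes the local-homeomorphism uniqueness of Step 1 unnecessary. So your route is salvageable with this two-line fiberwise argument, but without it the identification of $\Pi^s_{v\to w}u$ with the leaf point is not justified; note that this missing ingredient is exactly the isometric-fiber step on which the paper's own proof pivots.
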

\begin{proof}
Let $u'$ be the parallel transport of $u$ with respects to $\Theta$. By definition, any path in $W^s(v)$ lifts horizontally to a path in $W^s(u)$. As a result, $u' \in W^s(u)$, or equivalently:
$$d(\Phi_t(u), \Phi_t(u')) \to 0.$$
Recall that we fixed a connection $D$ such that the distance $g^P$ on $P$ is adapted to $D$. As a result, when $t$ is large enough, we may trivialize $P$ on some open set containing the unique geodesic from $\varphi_t(v)$ to $\varphi_t(w)$ so that the distance writes
\begin{align*}
d(\Phi_t(u), \Phi_t(u'))^2 &= d(\Phi_t(u), \tau_{\varphi_t(v) \to \varphi_t(w)} \circ \Phi_t(u))^2 \\
&+ d(\tau_{\varphi_t(v) \to \varphi_t(w)} \circ \Phi_t(u), \Phi_t(u'))^2.
\end{align*}
As a result, $d(\tau_{\varphi_t(v) \to \varphi_t(w)} \circ \Phi_t(u), \Phi_t(u'))$ converges to zero, and as $\Phi_t$ acts isometrically between fibers, we thus obtain 
$$u' = \lim_{t \to + \infty} \Phi_{-t} \circ \tau^D_{\varphi_t(v) \to \varphi_t(w)} \circ \Phi_t u = \Pi^s_{v \to w} u.$$
\end{proof}

\begin{csq} \label{cor_inclusions}
Given a base point $v \in N$, we have a chain of dense inclusions of holonomy groups at $v$:
$$\Pi_P(\mathcal{P}_v) \subset \Hol^{\dyn}(P, \Phi_t^P, v) \subset \overline{\Hol}^{\dyn}(P, \Phi_t^P, v).$$
\end{csq}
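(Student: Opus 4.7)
The plan is to establish the two set inclusions in the chain separately, noting that the density of $\Hol^{\dyn}(P, \Phi_t^P, v)$ in $\overline{\Hol}^{\dyn}(P, \Phi_t^P, v)$ will then follow automatically, since by Definition \ref{def:full} the third set is precisely the closure of $\Pi_P(\mathcal{P}_v)$. The substantive content therefore reduces to the first inclusion and to the density of $\Pi_P(\mathcal{P}_v)$ inside $\Hol^{\dyn}(P, \Phi_t^P, v)$.

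For the first inclusion $\Pi_P(\mathcal{P}_v) \subset \Hol^{\dyn}(P, \Phi_t^P, v)$, I would combine the preceding lemma, which identifies the stable and unstable holonomies with $\Theta$-parallel transport along stable and unstable paths, with the observation that $X^P \in \mathbb{H}$ is horizontal for $\Theta$ by the very definition of $\mathbb{H} = \mathbb{R} X^P \oplus E_s^P \oplus E_u^P$. Consequently the horizontal lift through $u \in P_v$ of a flowline $s \mapsto \varphi_s(v)$ is the unique curve $s \mapsto \Phi_s(u)$, so the central holonomies $\Pi^c_{v \to \varphi_t v} = \Phi_t|_{P_v}$ are themselves $\Theta$-parallel transports. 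Concatenating along any $\gamma \in \mathcal{P}_v$, the corresponding element of $\Pi_P(\mathcal{P}_v)$ is just the $\Theta$-parallel transport along $\gamma$, hence lies in $\Hol^{\dyn}(P, \Phi_t^P, v)$.

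For the density of $\Pi_P(\mathcal{P}_v)$ in $\Hol^{\dyn}(P, \Phi_t^P, v)$, fix $g = \tau^\Theta_\gamma \in \Hol^{\dyn}$ arising from a piecewise smooth loop $\gamma$ at $v$. I would subdivide the parameter interval so finely that each arc $\gamma_i$ is contained in a ball $B_i$ of small diameter $\delta_i$ on which the local product structure theorem applies; then, using LPS, replace each $\gamma_i$ by a csu path $\gamma_i'$ in $B_i$ with the same endpoints and length bounded by a uniform constant times $d(\gamma_i(0), \gamma_i(1))$. The concatenation $\gamma' = \gamma_1' \cdots \gamma_N'$ lies in $\mathcal{P}_v$, and the task becomes to show that $\tau^\Theta_{\gamma'} \to \tau^\Theta_\gamma = g$ as $\max_i \delta_i \to 0$, which will place $g$ in $\overline{\Pi_P(\mathcal{P}_v)} = \overline{\Hol}^{\dyn}$.

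The main obstacle is this last convergence statement, because csu paths do not $C^1$-approximate a smooth loop due to their zig-zag structure, so a direct ODE-stability argument for the parallel transport equation does not suffice. The plan is to apply a Gronwall-type estimate to each defect loop $\alpha_i = \gamma_i \cdot (\gamma_i')^{-1}$, yielding $\Vert \tau^\Theta_{\alpha_i} - \identity \Vert = O(\delta_i)$ from the continuity and uniform boundedness of $\Theta$ on the compact total space $P$; one then writes the total discrepancy $\tau^\Theta_\gamma \cdot (\tau^\Theta_{\gamma'})^{-1}$ as a telescoping product of conjugates of the $\tau^\Theta_{\alpha_i}$, exploiting the uniform boundedness of partial products in $G$. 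A naive summation over the $N \sim 1/\delta$ pieces gives only an $O(1)$ bound, so finer cancellations must be extracted; this is where the additional regularity of $\mathbb{H}$ (at minimum the Hölder regularity inherited from the Anosov-theoretic regularity of $E_{s/u}^P$) is exploited to upgrade the defect bound to $O(\delta_i^{1+\alpha})$, whose summation yields the desired $o(1)$ as the mesh refines.
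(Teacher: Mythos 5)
Your overall structure is sound and, where the paper says anything at all, it coincides with it: the paper gives no written proof of this corollary, treating it as immediate from the preceding lemma, and indeed the first inclusion is exactly your argument (stable/unstable holonomies are $\Theta$-parallel transports by that lemma, while $X^P \subset \mathbb{H}$ makes the central holonomies $\Theta$-transports along flowlines), and the density assertion is definitional since $\overline{\Hol}^{\dyn}(P,\Phi_t^P,v) = \overline{\Pi_P(\mathcal{P}_v)}$ by Definition \ref{def:full}. The middle inclusion $\Hol^{\dyn} \subset \overline{\Hol}^{\dyn}$, i.e.\ that the $\Theta$-holonomy of an \emph{arbitrary} piecewise smooth loop is a limit of csu-holonomies, is left implicit in the paper, and your approximation scheme is the natural way to supply it; so you are doing genuinely more than the text does. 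Two remarks. A minor one: paths in $\mathcal{P}_v$ are only continuous inside the leaves, so to realize $\Pi_P(\gamma)$ as a $\Theta$-holonomy you should first replace each leafwise piece by a piecewise smooth path in the same leaf with the same endpoints, which is legitimate because the lemma identifies $\Pi^{s/u}$ with the $\Theta$-transport along \emph{any} such path. More importantly, the one step you leave vague is precisely the crucial one: why each defect loop $\alpha_i$ of diameter and length $O(\delta_i)$ has transport $\identity + o(\delta_i)$ rather than $\identity + O(\delta_i)$. H\"older regularity of $\mathbb{H}$ by itself does not produce this (any non-closed arc of length $\delta$ has transport $\identity + O(\delta)$ however regular $\Theta$ is); the gain comes from the loop being closed. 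In a local trivialization the transport along $\alpha_i$ is $\identity - \oint_{\alpha_i} A + O(\ell_i^2)$, and the frozen-coefficient form $A(p_i)$ integrates to zero over a closed loop, so $\oint_{\alpha_i} A = \oint_{\alpha_i}\bigl(A - A(p_i)\bigr) = O\bigl(m(\delta_i)\,\delta_i\bigr)$, where $m(\cdot)$ is a modulus of continuity of $\Theta$; summing $N \sim 1/\delta$ such terms (conjugated by uniformly bounded partial products, as you say) gives a total defect $O(m(\delta)) + O(\delta) \to 0$. In particular mere continuity of $\Theta$, which is all Lemma \ref{lem_dyn_connection} provides (the paper nowhere establishes H\"older regularity of $\mathbb{H}$), already suffices; with the H\"older estimates from general partial hyperbolicity theory your bound $O(\delta_i^{1+\alpha})$ is also correct, but only after this cancellation is made explicit.
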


\section{The curvature morphism} \label{sec:curvature}

\subsection{Definition and equivariance property}

In the following, $(P, \Phi)$ is a $G$-principal extension of a contact Anosov flow $\varphi_t: N \to N$ preserving a contact form $\lambda$ such that the dynamical connection form $\Theta$ on $P$ is $\mathcal{C}^1$.

In this context, there is also a well-defined curvature form $\Omega$ which is a $\mathfrak{g}$-valued 2-form on $P$. It is defined by the formula
$$\Omega(Y, Z) = d\Theta(Y, Z) + [\Theta(Y), \Theta(Z)].$$

Notice that $\Omega$ is continuous since $\Theta$ is $\mathcal{C}^1$. Moreover, it is well known that it is horizontal and $G$-equivariant for the $\Ad$-action on $\mathfrak{g}$, so it induces a 2-form on the associated adjoint bundle 
$$\omega \in \mathcal{C}(N, \Lambda^2 T^* N \otimes \Ad(P)).$$
Note that there are induced flows $\Psi_t = \Lambda^2 d\Phi_t \otimes \Phi_t$ on the tensor product $\Lambda^2 T^*P \otimes (\frk{g} \times P)$ over $P$ and $\psi_t = \Lambda^2 d\varphi_t \otimes \Ad \Phi_t$ on the tensor product $\Lambda^2 T^*N \otimes \Ad(P)$ over $N$. We will still denote $X$ their generators.

\begin{lem} \label{lem_curvature}
The curvatures $\Omega, \omega$ satisfy the following properties:
\begin{enumerate}
\item[(1)] $\iota_{X}\Omega = 0$ and $\iota_X \omega = 0$;
\item[(2)] $\Psi_t^* \Omega = \Omega$ and $\psi_t^* \omega = \omega$.
\end{enumerate}
\end{lem}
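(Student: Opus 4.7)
The strategy is to prove both statements first at the level of $P$, that is, for the $\mathfrak{g}$-valued curvature form $\Omega$, and then to descend them to $N$ via the standard correspondence between horizontal, $\Ad$-equivariant $\mathfrak{g}$-valued 2-forms on $P$ and $\Ad(P)$-valued 2-forms on $N$. In particular, since $\Omega$ is horizontal by the general theory of principal connections, the formula $\omega_v(Y, Z) = [u, \Omega_u(\tilde Y, \tilde Z)]$ for $u \in P_v$ and any horizontal lifts $\tilde Y, \tilde Z$ of $Y, Z$ well-defines a section $\omega$ of $\Lambda^2 T^* N \otimes \Ad(P)$, and the two properties of $\omega$ I want to prove will follow at once from the analogous properties of $\Omega$.

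For part (1), write $X^P$ for the generator of $\Phi_t$ on $P$. Since by construction $X^P \in \mathbb{H}$, one has $\Theta(X^P) = 0$, so the bracket term $[\Theta(X^P), \Theta(Z)]$ in $\iota_{X^P} \Omega$ vanishes identically. For the remaining term $\iota_{X^P} d \Theta$, combine Lemma \ref{lem_dyn_connection}, which gives $\Phi_t^* \Theta = \Theta$ and hence $\mathscr{L}_{X^P} \Theta = 0$, with Cartan's magic formula $\mathscr{L}_{X^P} = \iota_{X^P} \circ d + d \circ \iota_{X^P}$ and $\iota_{X^P} \Theta = 0$ to conclude that $\iota_{X^P} d \Theta = 0$. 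Therefore $\iota_{X^P} \Omega = 0$. For part (2) at the level of $P$, using that pullback commutes with $d$ and with the pointwise bracket on $\mathfrak{g}$ (which lives in the target and is untouched by the pullback), I compute
$$\Phi_t^* \Omega = d(\Phi_t^* \Theta) + [\Phi_t^* \Theta, \Phi_t^* \Theta] = d \Theta + [\Theta, \Theta] = \Omega.$$

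To descend, observe that the generator $X^N$ of $\varphi_t$ lifts horizontally to $X^P$ (since $X^P \in \mathbb{H}$), so applying the descent formula with $Y = X^N$ and $Z$ arbitrary gives $\omega_v(X^N, Z) = [u, \Omega_u(X^P, \tilde Z)] = 0$, i.e., $\iota_X \omega = 0$. For the flow-invariance, the key point is that $\Phi_t$ preserves $\mathbb{H}$, since $\mathbb{H} = \R X^P \oplus E_s^P \oplus E_u^P$ is a sum of $\Phi_t$-invariant distributions; consequently horizontal lifts transform equivariantly, i.e.\ the horizontal lift of $d\varphi_t Y$ at $\Phi_t u$ equals $d\Phi_t \tilde Y$. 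Together with $\Phi_t^* \Omega = \Omega$ and the identity $\Ad(\Phi_t)[u, \xi] = [\Phi_t u, \xi]$ defining the lift of $\Phi_t$ to $\Ad(P)$, a brief computation unwinding the definition of $\psi_t^* \omega$ then yields $\psi_t^* \omega = \omega$.

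The whole argument is essentially bookkeeping once Lemma \ref{lem_dyn_connection} is granted. The only small point to be careful with is distinguishing between pullback on $P$ and pullback on $N$, and checking that the target-valued bracket $[\Theta, \Theta]$ is well-behaved under pullback; no genuine obstacle is anticipated.
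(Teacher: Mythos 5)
Your proposal is correct and follows essentially the same route as the paper: both establish (1) via $\iota_X\Theta = 0$, $\mathscr{L}_X\Theta = 0$ (Lemma \ref{lem_dyn_connection}) and Cartan's formula, establish (2) by pulling back the structure equation, and then transfer both properties to $\omega$ through the standard correspondence between horizontal $\Ad$-equivariant $\mathfrak{g}$-valued forms on $P$ and $\Ad(P)$-valued forms on $N$ (the paper phrases this via an equivariant projection $q$, you via explicit horizontal lifts, which is only an expository difference).
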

\begin{proof}
We first prove the two properties for $\Omega$.

Recall that $\Omega(Y, Z) = d\Theta(Y, Z) + [\Theta(Y), \Theta(Z)]$. We have already seen that $\mathscr{L}_{X} \Theta = 0$ in Lemma \ref{lem_dyn_connection}, and $\iota_X \Theta = 0$ by definition of $\Theta$. It follows that $\iota_{X} d\Theta = 0$ by Cartan's formula. 

Then, to obtain (2), observe that $\Psi_t^* \Theta = \Theta$ by Lemma \ref{lem_dyn_connection}, so
$$\Psi_t^* \Omega = \Psi_t^* (d\Theta + [\Theta, \Theta]) = d \Psi_t^* \Theta + [\Psi_t^* \Theta, \Psi_t^* \Theta] = \Omega.$$

We then prove the properties for $\omega$. The projection $\pi : P \to N$ induces an equivariant projection $q : \Lambda^2 T^*P \otimes \frk{g} \to \Lambda^2 T^*N \otimes \Ad(P)$ by
$$q(u, \pi_u^*\alpha \otimes A) = \alpha \otimes [u, A]_{\Ad(P)}, \quad q(u, \beta \otimes A) = 0$$
where $u \in P, \alpha \in \Lambda^2 T_{\pi(u)}^* N, \beta \in \Lambda^2 T_u^* P, A \in \frk{g}$ such that $\beta$ is vertical.

Then, $\Omega = q^* \omega$ and as $q$ is equivariant, $q^*$ preserves $X$ and $\mathscr{L}_X$. As a result, the two properties for $\omega$ follow from the ones for $\Omega$.

\end{proof}

The flow invariance can be used to further restrict the curvature form.
\begin{csq} \label{cor_restrict}
The restrictions of $\omega$ to $\Lambda^2 E_s^*, \Lambda^2 E_u^* \subset \Lambda^2 T^* N$ are zero.
\end{csq}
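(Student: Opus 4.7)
The plan is to combine the pointwise flow-invariance of $\omega$ given by Lemma \ref{lem_curvature}(2) with the Anosov exponential contraction/expansion on the stable/unstable bundles, while controlling the $\Ad(P)$ factor via compactness of $G$.

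First, I would translate $\psi_t^*\omega = \omega$ into a usable pointwise identity. Since $\psi_t = \Lambda^2 d\varphi_t \otimes \Ad\Phi_t$ sends the fiber over $v$ to the fiber over $\varphi_t v$, invariance of $\omega$ as a section amounts to
\begin{equation*}
\omega_v(Y, Z) \;=\; \Ad(\Phi_t)^{-1}\, \omega_{\varphi_t v}\!\bigl(d\varphi_t Y,\, d\varphi_t Z\bigr)
\end{equation*}
for every $v \in N$, every $Y, Z \in T_v N$, and every $t \in \R$.

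Next, take $Y, Z \in E_s(v)$. Because $\varphi_t$ is Anosov, there exist constants $C, c > 0$ (independent of $v$) with $\Vert d\varphi_t Y \Vert \leq C e^{-ct}\Vert Y\Vert$ and $\Vert d\varphi_t Z \Vert \leq C e^{-ct}\Vert Z\Vert$ for $t \geq 0$. We are in the setting of Section \ref{sec:hol_kanai}, so $G$ is compact; fixing a bi-invariant metric on $\frk{g}$ turns $\Ad$ into an isometric representation, and the induced fiber metric on $\Ad(P)$ makes $\Ad(\Phi_t)$ an isometry between fibers for every $t$. Since $\omega$ is continuous on the compact manifold $N$, it is bounded as a bilinear form, and the displayed identity yields
\begin{equation*}
\Vert \omega_v(Y, Z) \Vert \;\leq\; \Vert \omega \Vert_\infty \cdot \Vert d\varphi_t Y\Vert \cdot \Vert d\varphi_t Z\Vert \;\leq\; C' e^{-2ct}\Vert Y\Vert \Vert Z\Vert.
\end{equation*}
Letting $t \to +\infty$ forces $\omega_v(Y, Z) = 0$, so $\omega\vert_{\Lambda^2 E_s^*} = 0$.

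For the unstable case I would run the same argument with $t \to -\infty$: if $Y, Z \in E_u(v)$, then $d\varphi_t Y, d\varphi_t Z$ contract exponentially as $t \to -\infty$, and the identity above (applied with $-t$ in place of $t$) together with the isometry property of $\Ad(\Phi_{-t})$ gives $\omega_v(Y, Z) = 0$. There is no real obstacle here; the only point that requires attention is the uniform-in-$t$ control of the $\Ad(\Phi_t)^{\pm 1}$ factor, which is entirely supplied by compactness of $G$.
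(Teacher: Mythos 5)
Your proposal is correct and follows essentially the same route as the paper: flow-invariance of $\omega$ from Lemma \ref{lem_curvature}(2), exponential contraction of $d\varphi_t$ on $E_s$ (resp.\ on $E_u$ as $t \to -\infty$), and boundedness of $\omega$ by continuity and compactness of $N$. Your explicit control of the $\Ad(\Phi_t)^{-1}$ factor via a bi-invariant metric on $\frk{g}$ is a welcome detail that the paper's shorter argument leaves implicit.
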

\begin{proof}
Consider $Y^-, Z^- $ two sections of $E_s$. Then, as $\omega$ is invariant under the product flow $\psi_t = \Lambda^2 d\varphi_t \otimes \Ad \Phi_t$, we have
$$\psi_t^* \omega(Y^-, Z^-) = \omega(\Phi_{t*} Y^-, \Phi_{t*} Z^-) = \omega(Y^-, Z^-).$$
However, $\Phi_t^* Y^-, \Phi_t^* Z^-$ converge exponentially fast to 0, so as $\omega$ is continuous and $N$ is compact, we have $\vert \omega(Y^-, Z^-) \vert \leq C e^{-ct}$ for some $C, c > 0$. As a result, $\omega(Y^-, Z^-) = 0$.

A similar argument applies to the $\Lambda^2 E_u^*$ part.
\end{proof}

Decompose $\Lambda^2 T^* N = \Lambda^2 E_s \oplus E_s^* \otimes E_u^* \oplus \Lambda^2 E_u \oplus \R \lambda \otimes E_s^* \oplus \R \lambda \otimes E_u^*$. By item (1) of Lemma \ref{lem_curvature}, we see that $\omega$ vanishes on $\R \lambda \otimes E_s^*, \R \lambda \otimes E_u^*$; and by Corollary \ref{cor_restrict}, $\omega$ also vanishes on $\Lambda^2 E_s, \Lambda^2 E_u$. Hence, $\omega$ is determined by its values on $E_s^* \otimes E_u^*$.

\smallskip

We now introduce another way of writing $\omega$ which will be useful in the following (see Section \ref{sec:hyp_frame} for motivation). 

The fact that the flow-invariant contact form $\lambda$ induces a duality $d\lambda : E_s^* \cong E_u$ allows us to identify $E_s^* \otimes E_u^*$ with $\End(E_u)$. Under this identification, the flow induced by $d\phi_t$ on $E_s^* \otimes E_u^*$ corresponds to the flow $\Ad(d\varphi_t): A \mapsto d\phi_{-t} \cdot A \cdot d\phi_t$ on $\End(E_u)$.

On the other hand, the choice of a bi-invariant metric on the Lie algebra $\frk{g}$ (which exists as $G$ is compact) determines a metric $g_{\Ad}$ on $\Ad(P) = P \times_{\Ad} \frk{g}$ which is preserved by the flow and allows us to identify $\Ad(P)$ with its dual $\Ad(P)^*$. This identification exchanges the flows induced by $\Ad(\Phi_t)$ and its transpose $\Ad(\Phi_t)^T$. 

The above reasoning lets us see $\omega$ as a section of $\Ad(P)^* \otimes \End(E_u)$, or equivalently as a bundle map which we will denote $F = F^P: \Ad(P) \to \End(E_u)$ defined by
$$g_{\Ad}(\omega^P(Y^+, Y^-), \xi) = d\lambda(F^P(\xi)Y^+, Y^-) \label{eq:identifications}$$
for sections $\xi$ of $\Ad(P)$, $Y^+$ of $E_u$ and $Y^-$ of $E_s$.

Moreover, the fact that $\omega$ is $\psi_t$-invariant (item (2) of Lemma \ref{lem_curvature}) translates into the following:
$$F^P \circ \Ad(\Phi_{t})^T = \Ad(d\varphi_t) \circ F^P$$
where $\Ad(d\varphi_t)$ is the naturally induced flow on $\End(E_u)$, which is just conjugacy by $d\varphi_t$; and $\Ad(\Phi_t)$ is the associated flow on the associated bundle $\Ad(P)$ (see Section \ref{sec:ext_flow}) which in this case is the pullback by $\Phi_t$ of two-forms.

The flow-equivariance of $F$ can then be strengthened to some form of equivariance for the holonomy groups. To simplify the notations, we will simply denote by $d\varphi_t$ and $\Phi_t$ the corresponding induced actions on associated bundles.

\begin{pps} \label{pps_equivariance_master}
Assume that $G$ is compact, $d\varphi_t$ is fiber bunched with constant $\beta$ and that $F$ is $\alpha$-Hölder continuous with $\beta < \alpha$. Then the map $F$ is equivariant for the representations of the path monoid:
$$\forall \gamma \in \mathcal{P}_v, \ \Pi_{\End(E_u)}(\gamma) \circ F = F \circ \Pi_{\Ad(P)}(\gamma)^{T}.$$
\end{pps}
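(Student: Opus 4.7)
The plan is to reduce the statement to elementary pieces. Every $\gamma \in \mathcal{P}_v$ is a finite concatenation of stable, unstable and central paths; since the equivariance
$$\Pi_{\End(E_u)}(\gamma) \circ F = F \circ \Pi_{\Ad(P)}(\gamma)^T$$
is preserved under composition of composable paths, it is enough to check it on each of these three families.

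The central case is immediate. Along a flowline, the induced holonomies on $\Ad(P)$ and $\End(E_u)$ coincide with the respective flow maps $\Ad(\Phi_t)$ and $\Ad(d\varphi_t)$, and the desired relation is precisely the flow-equivariance $F \circ \Ad(\Phi_t)^T = \Ad(d\varphi_t) \circ F$ established from $\psi_t^*\omega = \omega$ in Lemma~\ref{lem_curvature}.

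The core of the proof is the stable case, the unstable one being symmetric. Fix $v \in N$, $w \in W^s(v)$, and a $\mathcal{C}^1$ auxiliary connection $D$. By Theorem~\ref{thm_stable_hol} (and its principal-bundle corollary) the stable holonomies on both associated bundles are given by
$$\Pi^s_{v \to w} = \lim_{t \to +\infty} \Phi_{-t} \circ \tau^D_{\varphi_t v \to \varphi_t w} \circ \Phi_t,$$
where $\Phi_t$ stands for the induced flow on the relevant bundle. Using the flow equivariance of $F$ at each time $t$ to pull $F$ through $\Phi_t$, the discrepancy
$$\Pi^{s,\End(E_u)}_{v \to w} \circ F_v - F_w \circ \bigl(\Pi^{s,\Ad(P)}_{v \to w}\bigr)^T$$
rewrites, inside the limit, as $\Phi_{-t}^{\End(E_u)}$ applied to $\tau^D_{\varphi_t v \to \varphi_t w} \circ F_{\varphi_t v} - F_{\varphi_t w} \circ \tau^D_{\varphi_t v \to \varphi_t w}$, composed on the right with $(\Phi_t^{\Ad(P)})^T$. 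The inner commutator is $O(d_s(\varphi_t v, \varphi_t w)^\alpha) = O(\nu(v)^{t\alpha(1-\varepsilon)})$ by the $\alpha$-Hölder regularity of $F$, boundedness of $\tau^D$, and the contraction estimate \eqref{eq:shrink_distance}. The outer factors grow at worst like $\nu(v)^{-t\beta(1-\varepsilon)}$ by the fiber bunching assumption on $d\varphi_t$ combined with the isometricity of the flow on $\Ad(P)$ (which yields $\Vert (\Phi_t^{\Ad(P)})^T \Vert = 1$). The total bound is therefore $O(\nu(v)^{t(\alpha - \beta)(1 - \varepsilon)})$, which tends to $0$ as $t \to +\infty$ provided $\varepsilon$ is chosen small enough, since $\alpha > \beta$. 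The unstable case follows by exchanging the roles of $E_s$ and $E_u$.

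The main obstacle is the delicate interplay between the Hölder regularity of $F$ on the base, which makes the inner commutator shrink, and the fiber bunching, which controls the explosive factors arising from $\Phi_{-t}$ and $\Phi_t$ in the holonomy formula. The strict inequality $\beta < \alpha$ is exactly the condition under which these two contributions combine to a strictly negative exponent, allowing the limit to vanish.
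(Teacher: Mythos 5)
Your proposal is correct and follows essentially the same route as the paper: the paper likewise reduces to stable, unstable and central holonomies, transports the equivariance defect along the flow to large times, and kills it by combining the $\alpha$-Hölder continuity of $F$ with the contraction estimate \eqref{eq:shrink_distance}, the fiber-bunching bound $\nu^{-t\beta}$ for the conjugation action on $\End(E_u)$, the isometric action on $\Ad(P)$, and the hypothesis $\alpha > \beta$. The only (cosmetic) difference is that you estimate the defect inside the limit defining the holonomies using the reference transport $\tau^D$, whereas the paper works with the already-formed holonomy $h_t$ and compares it to $\tau_t$ via the uniform bound \eqref{eq:unif_bound_hol_tp}.
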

\begin{proof}
Let $v \in N$, $\varepsilon > 0$ small enough, $V_\varepsilon(v)$ be the neighborhood constructed in the Theorem \ref{thm_stable_hol}. For $w \in V_\varepsilon(v)$, denote $h_{t} = \Pi^s_{\varphi_t (v) \to \varphi_t(w)}$, $\tau_{t} = \tau_{\varphi_t(v) \to \varphi_t(w)}$ on each relevant bundle. Consider 
$$S(t, \xi) = (h_{v \to w} F)(\xi)(\varphi_t(v)) - F(\xi)(\varphi_t(w)).$$
We want to show that $S(0, \xi) = 0$. Note that as stable holonomies commute with the flow and $F$ is flow-equivariant we obtain 
$$S(0, \xi) = d\varphi_{-t} S(t, \Phi_t(\xi)^T).$$
Since $\Phi_t^T$ acts isometrically on $\Ad(P)$ and $N$ is compact,
$$\Vert S(0, \xi) \Vert \leq C \Vert d\varphi_{-t} \vert_{\End(E_u)} \Vert \cdot \Vert h_{t} F - F \Vert \cdot \Vert \xi \Vert.$$
On one hand, as $d\varphi_t$ is fiber-bunched, we have $\Vert d\varphi_{-t} \Vert \leq C \nu^{-t\beta}$. On the other hand, the Hölder continuity of $F$ and the bound \eqref{eq:unif_bound_hol_tp} imply that 
\begin{align*}
\Vert h_{t} F - F \Vert &\leq \Vert h_{t} F - \tau_{t} F \Vert + \Vert \tau_{t} F - F \Vert \\
&\leq C d_s(v_t, w_t)^\alpha.
\end{align*}
Using the exponential convergence \eqref{eq:shrink_distance}, we obtain
$$\Vert S(0, \xi) \Vert \leq C \nu(v)^{-t\beta + t\alpha(1 - \varepsilon)} \Vert \xi \Vert.$$
As a result, $S(0, \xi) = 0$ as $t \to +\infty$ for any $\xi$ whenever $\alpha (1 - \varepsilon) \leq \beta$, which happens when $\varepsilon < 1 - \frac{\beta}{\alpha}$; we may choose a suitable $\varepsilon$ whenever $\alpha > \beta$.

This shows that $F$ is equivariant under stable holonomies. Similarly, $F$ is equivariant under unstable holonomies, and we already know that $F$ is equivariant under central holonomies, hence the result.
\end{proof}

\begin{csq} \label{cor_eq_curvature}
Assume that $M$ is a (closed negatively curved) $\delta$-pinched manifold, $\delta > 1/4$. If the dynamical connection form $\Theta$ of its frame flow is $\mathcal{C}^{1/\sqrt{\delta}}$, then its associated curvature map $F$ is well-defined and equivariant for the representations of $\mathcal{P}_v$.

In particular, this is always the case if $M$ is strictly $1/4$-pinched and $\Theta$ is $\mathcal{C}^{1, 1}$.
\end{csq}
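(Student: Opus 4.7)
The plan is to obtain the corollary as a direct application of Proposition \ref{pps_equivariance_master} to the frame flow $\Phi_t$ on the principal $\SO(n-1)$-bundle $FM \to SM$. One needs to verify well-definedness of $F$, compactness of the structure group, fiber-bunching of the geodesic flow, and sufficient Hölder regularity of $F$.

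Since $\delta > 1/4$ one has $1 < 1/\sqrt{\delta} < 2$, so $\Theta \in \mathcal{C}^{1/\sqrt{\delta}}$ decomposes as $\mathcal{C}^{1, \alpha}$ with $\alpha = 1/\sqrt{\delta} - 1 \in (0, 1)$. In particular $\Theta$ is $\mathcal{C}^1$, hence $\Omega = d\Theta + [\Theta, \Theta]$ is a well-defined continuous $\mathfrak{so}(n-1)$-valued 2-form. Combining the contact form of the geodesic flow with a bi-invariant metric on $\mathfrak{so}(n-1)$, the identifications of Section \ref{sec:curvature} realize the induced $\omega$ as a continuous bundle map $F : \Ad(FM) \to \End(E_u)$, establishing the well-definedness assertion.

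For equivariance, the structure group $\SO(n-1)$ is compact; Lemma \ref{lem_diff_geod} gives fiber-bunching of $d\phi_t$ on $E_u$ with constant $\beta = 1/\sqrt{\delta} - 1$; and since $\Theta \in \mathcal{C}^{1, \alpha}$, both $d\Theta$ and $[\Theta,\Theta]$ are $\alpha$-Hölder, so $F$ is $\alpha$-Hölder with $\alpha = 1/\sqrt{\delta} - 1$. Invoking Proposition \ref{pps_equivariance_master} then produces the required equivariance of $F$ under the representations $\Pi_{\End(E_u)}$ and $\Pi_{\Ad(P)}^{T}$ of $\mathcal{P}_v$. The second statement is a special case with wider margin: strict $1/4$-pinching means $M$ is $\delta$-pinched for some $\delta > 1/4$ and hence $\beta < 1$, while $\Theta \in \mathcal{C}^{1,1}$ gives $\alpha_F = 1 > \beta$, so the strict bunching inequality of Proposition \ref{pps_equivariance_master} holds comfortably. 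The main delicate point in the general case is that the regularity threshold $1/\sqrt{\delta}$ is calibrated exactly to the optimal Jacobi-field estimates underlying Lemma \ref{lem_diff_geod}, so $\alpha_F$ and $\beta$ agree on the nose, and one must read carefully the limiting argument in the proof of Proposition \ref{pps_equivariance_master} to absorb this boundary equality.
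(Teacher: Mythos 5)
Your route is exactly the paper's: Corollary \ref{cor_eq_curvature} carries no separate proof and is intended as a direct application of Proposition \ref{pps_equivariance_master}, with the fiber bunching supplied by Lemma \ref{lem_diff_geod} and the H\"older regularity of $F$ read off from $\Theta \in \mathcal{C}^{1,\,1/\sqrt{\delta}-1}$; your verification of the hypotheses (compactness of $\SO(n-1)$, well-definedness of $\Omega = d\Theta + [\Theta,\Theta]$ and hence of $F$, the exponents $\alpha = \beta = 1/\sqrt{\delta}-1$) is the intended derivation, and it disposes of the ``in particular'' statement cleanly, since there $\alpha = 1 > \beta$.

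The problem is your last sentence. Proposition \ref{pps_equivariance_master} assumes the \emph{strict} inequality $\beta < \alpha$, and its proof genuinely uses it: combining $\Vert d\varphi_{-t}\vert_{\End(E_u)} \Vert \lesssim \nu^{-t\beta}$, the bound $\Vert h_t F - F \Vert \lesssim d_s(v_t, w_t)^{\alpha}$ coming from \eqref{eq:unif_bound_hol_tp} and the H\"older continuity of $F$, and the contraction \eqref{eq:shrink_distance}, one gets $\Vert S(0,\xi)\Vert \leq C \nu(v)^{t(\alpha(1-\varepsilon)-\beta)}\Vert \xi \Vert$, and this forces $S(0,\xi)=0$ only if $\alpha(1-\varepsilon) > \beta$. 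When $\alpha = \beta = 1/\sqrt{\delta}-1$ the exponent is $\leq 0$ for every $\varepsilon \geq 0$, so the right-hand side is merely bounded and nothing in the limiting argument ``absorbs'' the equality; no choice of $\varepsilon$ or of the auxiliary connection helps, and with only $\delta$-pinching one cannot in general push the bunching constant of Lemma \ref{lem_diff_geod} below $1/\sqrt{\delta}-1$. So at the literal critical regularity $\mathcal{C}^{1/\sqrt{\delta}}$ your argument (and, for that matter, a literal citation of Proposition \ref{pps_equivariance_master}) does not close: one needs strictness from somewhere, e.g. $\Theta \in \mathcal{C}^{k}$ for some $k > 1/\sqrt{\delta}$, or pinching by some $\delta' > \delta$, exactly as in the corollary's second sentence. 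If you want the statement at the exact threshold you must say so and do extra work, for instance by sharpening the estimate on $\Vert \tau_t F - F\Vert$ to a $o\left(d_s(v_t,w_t)^{\alpha}\right)$ bound (a little-H\"older type hypothesis), or by explicitly invoking the strict inequality hidden in ``strictly $1/4$-pinched''; as written, the claim that a careful reading of the existing limit handles the boundary case is not correct.
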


\begin{csq} \label{cor_rank}
Under the assumptions of Proposition \ref{pps_equivariance_master}, the bundle map $F$ has constant rank.
\end{csq}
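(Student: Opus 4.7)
The plan is to exploit the equivariance property of $F$ established in Proposition \ref{pps_equivariance_master}, combined with the fact that any two points of $N$ may be joined by a csu path, to deduce that the rank of $F_v$ does not depend on $v \in N$.

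First I would note that, although Proposition \ref{pps_equivariance_master} is stated for loops $\gamma \in \mathcal{P}_v$, the argument given in its proof actually establishes the intertwining identity
$$F_w \circ \Pi_{\Ad(P)}(\gamma)^T = \Pi_{\End(E_u)}(\gamma) \circ F_v$$
for any csu path $\gamma \in \mathcal{P}_{v, w}$ between two (possibly distinct) points. Indeed, the stable estimate in that proof is carried out for an arbitrary $w$ close to $v$ in $W^s(v)$ and never uses $v = w$; the symmetric argument applies to unstable holonomies, while central holonomies are handled by the original flow-equivariance of $F$ (item (2) of Lemma \ref{lem_curvature}). Concatenating yields the claim for an arbitrary csu path.

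Next, since $\Pi_{\Ad(P)}(\gamma) : \Ad(P)_v \to \Ad(P)_w$ and $\Pi_{\End(E_u)}(\gamma) : \End(E_u)_v \to \End(E_u)_w$ are linear isomorphisms between the respective fibers (they are invertible, with inverses given by the holonomies along the reversed path), the intertwining identity immediately forces $\rk F_v = \rk F_w$ whenever $\mathcal{P}_{v, w} \neq \emptyset$. Now by Lemma \ref{lem_exist_csu}(1), any continuous path from $v$ to $w$ is homotopic to one in $\mathcal{P}_{v, w}$; since $N$ is (path-)connected, this produces such a csu path between any pair $v, w \in N$. The rank of $F$ is therefore constant on $N$.

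The only point requiring any care is the promotion of the equivariance of $F$ from loops to arbitrary csu paths; once this bookkeeping step is made, the conclusion is formal and there is no real obstacle to overcome.
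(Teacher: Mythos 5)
Your proposal is correct and follows essentially the same route as the paper: the paper's proof likewise takes an arbitrary csu path $\gamma \in \mathcal{P}_{v,w}$ (via Lemma \ref{lem_exist_csu}), applies the intertwining identity of Proposition \ref{pps_equivariance_master} between the fibers at $v$ and $w$, and concludes from invertibility of the holonomy maps. Your explicit remark that the equivariance proved for loops extends verbatim to paths between distinct points is exactly the (tacit) step the paper relies on, so it is a welcome clarification rather than a divergence.
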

\begin{proof}
If $v, w \in N$, let $\gamma \in \mathcal{P}_{v, w}$ be any path, which exists by Lemma \ref{lem_exist_csu}. Then, Proposition \ref{pps_equivariance_master} yields
$$\Pi_{E_u}(\gamma) \circ F_v = F_w \circ \Pi_P(\gamma)$$
which implies that $F_v$ and $F_w$ have the same rank.
\end{proof}

\begin{csq} \label{cor_ambrose_singer}
Choosing an identification of a fiber $\Ad(P)_v \cong \frk{g}$, the subspace $\ker F_v^\perp$ corresponds to the Lie algebra $\frk{d}^P$ of the \emph{classical} holonomy group $D^P = \Hol^{\dyn}(P, \Phi_t, v)$. This identification is compatible with the holonomy action of $H^P = \overline{\Hol}^{\dyn}(P, \Phi_t, v)$.
\end{csq}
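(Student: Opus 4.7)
The plan is to combine the classical Ambrose--Singer theorem (applicable since $\Theta$ is $\mathcal{C}^1$, see Remark \ref{rmq_classical}) with the holonomy equivariance of $F$ from Proposition \ref{pps_equivariance_master}. First I would unpack $(\ker F_v)^\perp$: the defining identity $g_{\Ad}(\omega_v(Y^+, Y^-), \xi) = d\lambda(F_v(\xi) Y^+, Y^-)$ says that $\xi \in \ker F_v$ if and only if $\xi$ is $g_{\Ad}$-orthogonal to every value $\omega_v(Y^+, Y^-)$, so $(\ker F_v)^\perp = \mathrm{Im}\,\omega_v$ as subspaces of $\Ad(P)_v$. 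The task therefore reduces to identifying $\mathrm{Im}\,\omega_v$ with $\frk{d}^P$ under the chosen identification $\frk{g} \cong \Ad(P)_v$.

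Fix $u_0 \in P_v$ so that $A \mapsto [u_0, A]$ yields the identification $\frk{g} \cong \Ad(P)_v$, and let $Q(u_0)$ denote the classical holonomy bundle through $u_0$. For $u \in Q(u_0)$ projecting to $w \in N$, the horizontal space $\mathbb{H}_u$ maps isomorphically onto $T_w N$ via $d\pi$, and the isomorphism $A \mapsto [u, A]$ carries $\mathrm{Im}\,\Omega_u \subset \frk{g}$ onto $\mathrm{Im}\,\omega_w \subset \Ad(P)_w$; by construction this isomorphism is exactly the parallel transport $\Pi_{\Ad(P)}(\gamma) : \Ad(P)_v \to \Ad(P)_w$ along any path from $u_0$ to $u$ in $Q(u_0)$. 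Ambrose--Singer states that $\frk{d}^P$ is the vector-space span of $\{\mathrm{Im}\,\Omega_u : u \in Q(u_0)\}$, so the crux is to show that all these images coincide as subspaces of $\frk{g}$. The proof of Proposition \ref{pps_equivariance_master} is local and applies verbatim to paths between distinct base points inside a stable, unstable or flow leaf, and the resulting equivariance statements compose, giving $\Pi_{\End(E_u)}(\gamma) \circ F_v = F_w \circ \Pi_{\Ad(P)}(\gamma)^T$ for every $\gamma \in \mathcal{P}_{v, w}$. Since $G$ is compact and $g_{\Ad}$ is $\Ad$-invariant, parallel transport on $\Ad(P)$ is isometric, so $\Pi_{\Ad(P)}(\gamma)^T = \Pi_{\Ad(P)}(\gamma)^{-1}$ and the equivariance rewrites as $F_w \circ \Pi_{\Ad(P)}(\gamma) = \Pi_{\End(E_u)}(\gamma) \circ F_v$. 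This immediately gives $\Pi_{\Ad(P)}(\gamma)(\ker F_v) = \ker F_w$, and, by orthogonality of the transport, $\Pi_{\Ad(P)}(\gamma)(\mathrm{Im}\,\omega_v) = \mathrm{Im}\,\omega_w$.

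Every point of $Q(u_0)$ being reached from $u_0$ by a finite concatenation of horizontal lifts of stable, unstable and flow paths, the preceding step shows that $\mathrm{Im}\,\Omega_u$ is the same subspace of $\frk{g}$ for all $u \in Q(u_0)$, namely the preimage of $\mathrm{Im}\,\omega_v = (\ker F_v)^\perp$ under the identification induced by $u_0$. Ambrose--Singer then gives $\frk{d}^P = (\ker F_v)^\perp$. The compatibility with the $H^P$-action is immediate from Proposition \ref{pps_equivariance_master} applied to loops $\gamma \in \mathcal{P}_v$: the orthogonal operator $\Pi_{\Ad(P)}(\gamma)$ preserves both $\ker F_v$ and $(\ker F_v)^\perp$, and under the identification with $\frk{g}$ it acts by $\Ad$ of the corresponding holonomy element, matching the expected $\Ad(H^P)$-invariance of the Lie algebra $\frk{d}^P$. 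The main technical obstacle is justifying the extension of Proposition \ref{pps_equivariance_master} from loops at $v$ to paths between distinct base points and correctly managing the transpose through the metric identifications; isometry of parallel transport on $\Ad(P)$, guaranteed by compactness of $G$, is the key simplification.
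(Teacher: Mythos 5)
Your route is the same as the paper's: the Ambrose--Singer theorem for the $\mathcal{C}^1$ connection $\Theta$, the duality $g_{\Ad}(\omega_v(Y^+,Y^-),\xi)=d\lambda(F_v(\xi)Y^+,Y^-)$ to identify $(\ker F_v)^\perp$ with the (span of the) values of $\omega_v$, and the holonomy-equivariance of the curvature from Proposition \ref{pps_equivariance_master} to reduce the values of $\Omega$ over the whole holonomy bundle to the values at one point; the extension of that equivariance from loops to paths in $\mathcal{P}_{v,w}$, which you flag as the main technical point, is indeed what the proof of the proposition actually establishes and is already used in the paper (e.g.\ in Corollary \ref{cor_rank}), and your handling of the transpose via the $\Ad$-invariant metric is correct.

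There is, however, one step asserted without justification which is not true as stated: that every point of the \emph{classical} holonomy bundle $Q(u_0)$ is reached from $u_0$ by horizontal lifts of stable, unstable and flow paths. The classical holonomy bundle is generated by \emph{all} $\Theta$-horizontal paths, so its fiber over $v$ is $u_0\cdot D^P$, whereas csu lifts only reach $u_0\cdot\Pi_P(\mathcal{P}_v)$; Corollary \ref{cor_inclusions} gives only a chain of dense inclusions $\Pi_P(\mathcal{P}_v)\subset D^P\subset H^P$, not the equality $\Pi_P(\mathcal{P}_v)=D^P$ that your claim amounts to. The gap is repairable, and the repair is what the paper's phrase ``$\Omega$ is parallel with respect to $\Theta$'' encodes: for a general $u\in Q(u_0)$ over $w$, write $u=u'\cdot g$ with $u'$ the endpoint of a csu-horizontal lift and $g\in D^P$; then $G$-equivariance of the curvature form gives $\mathrm{Im}\,\Omega_{u}=\Ad(g^{-1})\,\mathrm{Im}\,\Omega_{u'}$, and since every element of $D^P$ is a limit of csu-loop holonomies (density in $H^P$) and $F$ is continuous, $\Ad(D^P)$ preserves the closed subspace $(\ker F_v)^\perp$, so all the images $\mathrm{Im}\,\Omega_u$, $u\in Q(u_0)$, still coincide and Ambrose--Singer yields $\frk{d}^P=(\ker F_v)^\perp$. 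With that correction your argument matches the paper's proof, including the final observation that the equivariance under $H^P$ (not just $D^P$) makes the identification compatible with the full holonomy action; the only other nitpick is that ``$\mathrm{Im}\,\omega_v$'' should be read as the linear span of the values of $\omega_v$.
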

\begin{proof}
The Ambrose-Singer holonomy theorem \cite[Vol. 1, Ch. 2, Theorem 8.1]{kobayashi1996foundations} writes as follows. The Lie algebra $\frk{d}^P$ of the \emph{classical} holonomy group $D^P = \Hol^{\dyn}(P)$ is generated by the values taken by the curvature $\Omega$ on the holonomy bundle. 

Notice that as $\Omega$ is equivariant with respect to the action of $H^P = \overline{\Hol}^{\dyn}(P, \Phi_t, v)$ by Proposition \ref{pps_equivariance_master}, the same is true for the action of $D^P \subset H^P$ (Corollary \ref{cor_inclusions}). This implies that $\Omega$ is parallel with respect to the dynamical connection form $\Theta$.

As a result, the values taken by $\Omega$ along the holonomy bundle associated to $\Theta$ correspond, through parallel transport, to the values taken at a single point of the holonomy bundle. More precisely, the Lie algebra $\frk{d}^P \subset \frk{g}$ is generated by the values taken by $\Omega_u$ for any $u \in P_v$. On the base, we obtain an identification between the values taken by the 2-form $\omega$ at $v$ and $\frk{d}^P \subset \frk{g} \cong \Ad(P)_v$.

Then, $F$ is obtained from $\omega$ by taking a dual with respect to a metric on $\Ad(P)$, which in particular implies that $(\ker F_v)^\perp$ coincides with the range of $\omega$.

As explained above, the maps $F, \omega, \Omega$ are parallel with respect to the holonomy action of $H^P$ and not just $D^P$, which implies that the identification $\frk{d}^P \cong (\ker F_v) ^\perp$ commutes with the action of $H^P$.
\end{proof}

\subsection{Examples}

We now give three important examples of explicit com-putations of the curvature $\omega$, in which the connection form $\Theta$ happens to be smooth.

\subsubsection{Flat flows} \label{sec:flat}

Let $N$ be a closed manifold and $\varphi_t$ an Anosov flow. Then, the "trivial" extension defined on the principal $\{\identity\}$-bundle $\{\identity\} \times N$ by the formula $\Phi_t(x, \identity) = (\varphi_t(x), \identity)$ is clearly an isometric extension of $\phi_t$ which admits (trivial) dynamical holonomies, with a trivial holonomy group and zero curvature. This generalizes as follows.

\begin{df}[Flat flows] \label{def:flat}
Let $N$ be a closed manifold and $\varphi_t$ a flow. Denote $\widetilde{N}$ the universal cover of $N$. The vector field defining $\varphi_t$ lifts uniquely to $\widetilde{N}$ and defines a flow $\tilde{\varphi}_t$. For any Lie group $G$, consider the flow $\widetilde{\Phi}_t$ acting on $\widetilde{N} \times G$ defined by 
$$\widetilde{\Phi}_t(x, a) = (\tilde{\varphi}_t(x), a).$$
Any representation $\rho: \pi_1(M) \to G$ acts on $\widetilde{N} \times G$ by $\gamma \cdot (\tilde{v}, a) = (\gamma \cdot \tilde{v}, \rho(\gamma)^{-1}a)$. The flow $\widetilde{\Phi}_t$ then descends to a flow $\Phi_t$ on the flat bundle $P = \widetilde{N} \times_\rho G \coloneq (\widetilde{M} \times G) / \pi_1(M)$, which we call the \emph{flat flow} associated to $\rho$. Seeing $\widetilde{N} \to N$ as a principal $\pi_1(M)$ bundle, we can interpret $\Phi_t$ as the induced flow on the bundle associated to the representation $\rho$.
\end{df}

\begin{lem} \label{lem_flat_flows}
Let $(P, \Phi_t)$ be a smooth principal extension of $\varphi_t$ with structure group $G$. Assume that the associated dynamical connection form $\Theta$ is $\mathcal{C}^1$. The following are equivalent:
\begin{itemize}
\item The dynamical horizontal distribution $\mathbb{H}$ is integrable;
\item The dynamical curvature $\omega$ vanishes;
\item The flow $\Phi_t$ is flat.
\end{itemize}
\end{lem}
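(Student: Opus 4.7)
The plan is to treat the two equivalences separately: $\mathbb{H}$ integrable $\iff \omega = 0$ follows from the structure equation and Frobenius, while $\omega = 0 \iff \Phi_t$ flat requires a uniformization argument on the universal cover.

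For the first equivalence, I would use that both $\Omega$ and $\omega$ are horizontal, and via the equivariant projection $q$ from Lemma \ref{lem_curvature} they vanish simultaneously. Taking two sections $Y, Z$ of $\mathbb{H}$, the identity $\Theta(Y) = \Theta(Z) = 0$ and the structure equation
$$\Omega(Y, Z) = d\Theta(Y, Z) + [\Theta(Y), \Theta(Z)] = -\Theta([Y, Z])$$
(using Cartan's formula $d\Theta(Y, Z) = Y \cdot \Theta(Z) - Z \cdot \Theta(Y) - \Theta([Y, Z])$) show that $\Omega$ vanishes on $\mathbb{H}$ iff $[Y, Z]$ is horizontal for every horizontal $Y, Z$, which by Frobenius is exactly the integrability of $\mathbb{H}$.

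For the direction flat $\Rightarrow \omega = 0$, I would verify the claim on the model $P = \widetilde{N} \times_\rho G$ directly. The natural flat horizontal distribution $\mathbb{H}^{\mathrm{flat}}$ comes from the tangent spaces of the slices $\widetilde{N} \times \{g\}$. Since the lifted flow $\widetilde{\Phi}_t(x, g) = (\widetilde{\varphi}_t(x), g)$ preserves each slice, its generator $X^P$ is horizontal for $\mathbb{H}^{\mathrm{flat}}$. The differential $d\widetilde{\Phi}_t$ acts as $d\widetilde{\varphi}_t$ on the horizontal factor and trivially on the vertical factor, so its contracting and expanding bundles $E_s^P, E_u^P$ are precisely the horizontal lifts of the stable and unstable bundles of $\varphi_t$, hence lie inside $\mathbb{H}^{\mathrm{flat}}$. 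A dimension count gives $\mathbb{H} = \mathbb{H}^{\mathrm{flat}}$, which is flat by construction.

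For the converse, assume $\omega = 0$; the first equivalence then gives an integrable $\mathbb{H}$, foliating $P$ by leaves that are covers of $N$. Pulling back to the universal cover $q : \widetilde{N} \to N$, each leaf is simply connected and diffeomorphic to $\widetilde{N}$ via $\pi$; fixing one such leaf and translating it by the right $G$-action yields a trivialization $q^*P \cong \widetilde{N} \times G$ in which the horizontal leaves are the slices $\widetilde{N} \times \{g\}$. The deck transformation group $\pi_1(N)$ acts on $q^*P$ by connection-preserving bundle automorphisms, hence in this trivialization takes the form $\gamma \cdot (x, g) = (\gamma \cdot x, \rho(\gamma)^{-1}g)$ for a unique monodromy representation $\rho : \pi_1(N) \to G$, so $P \cong \widetilde{N} \times_\rho G$. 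Finally, since $X^P \in \mathbb{H}$ implies that $\widetilde{\Phi}_t$ preserves horizontal leaves, and $\widetilde{\Phi}_t$ commutes with the right action of $G$, it must read $\widetilde{\Phi}_t(x, g) = (\widetilde{\varphi}_t(x), g)$ in the trivialization, presenting $\Phi_t$ as the flat flow associated to $\rho$. The delicate step throughout is the identification of the dynamically defined horizontal distribution $\mathbb{H}$ with the flat horizontal distribution in the flat case, which itself rests on the fact that $d\widetilde{\Phi}_t$ is the identity on the fiber factor in the flat trivialization, so the hyperbolic behavior of $\Phi_t$ is inherited entirely from that of $\varphi_t$.
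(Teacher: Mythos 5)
Your proof is correct and follows essentially the same route as the paper: for $\omega = 0 \Rightarrow$ flat you trivialize $q^*P$ over the universal cover by flat sections and read off that the lifted flow fixes the $G$-coordinate, and for flat $\Rightarrow \omega = 0$ you identify the dynamical splitting of the lifted flow with the tangent spaces of the slices $\widetilde{N}\times\{g\}$, exactly as in the paper's argument. The only difference is that you spell out two steps the paper leaves implicit — the Frobenius/structure-equation equivalence between integrability of $\mathbb{H}$ and $\omega = 0$, and the construction of the monodromy representation $\rho$ from the deck action on the flat trivialization — both of which are standard and correctly handled.
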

\begin{proof}
Assume first that $\omega = 0$. Denote $q: \widetilde{N} \to N$ the universal cover. Then, $\Theta$ lifts to a flat connection form $\widetilde{\Theta}$ on $q^* P$, which implies that $q^*P\to \widetilde{N} \cong \widetilde{N} \times G$ is a trivial bundle. Denote $\widetilde{\Phi}_t$ the lift of $\Phi_t$ to $q^* P$, then by construction the action of $\widetilde{\Phi}_t$ is by holonomy along paths for $\widetilde{\Theta}$, hence in our above trivialization of $q^*P \to \widetilde{N}$ the action of $\widetilde{\Phi}_t$ writes 
$$\widetilde{\Phi}_t(v, u) = (\tilde{\varphi}_t(v), u) \ \forall (v, u) \in \widetilde{N} \times G.$$

Conversely, assume that $\Phi_t$ is flat and denote $\widetilde{\Phi}_t$ the lift of $\Phi_t$ to $\widetilde{N}$, so that in a trivialization $q^*P \cong \widetilde{N} \times G$ we have 
$$\widetilde{\Phi}_t(v, u) = (\varphi_t(v), u) \ \forall (v, u) \in \widetilde{N} \times G.$$
Then, for any $u \in G$, the submanifold $S_u = \widetilde{N} \times \{u \}$ is obviously $\widetilde{\Phi}_t$-invariant. The action of $\widetilde{\Phi}_t$ on $S$ is the same as that of $\varphi_t$ on $\widetilde{N}$, which lets us identify the corresponding stable and unstable subspaces. We thus see that the tangent distributions to the $S_u, u \in G$ form the dynamical horizontal distribution of $\widetilde{\Phi}_t$; this distribution is integrable hence the associated curvature $\omega$ vanishes.
\end{proof}

Note that flat extensions have discrete $\Hol^{\dyn}$, but may have nondiscrete $\overline{\Hol}^{\dyn}$. It can even happen that $\overline{\Hol}^{\dyn} = G$: one can obtain examples by constructing representations of the fundamental group $\pi_1(N)$ into $G$ with dense image.

\subsubsection{The frame flow on the real hyperbolic space} \label{sec:hyp_frame}

Let $M = \mathbb{H}^n(\R) = \SO_0(1, n) / \SO(n)$ be the real hyperbolic space of dimension $n$. By definition, $\SO_0(1, n)$ is a principal $\SO(n)$-bundle over $M$. We can naturally identify the frame bundle $FM$ to $\SO_0(n, 1)$ : this can be understood as follows. A general matrix in $\SO_0(n, 1)$ can be written as
$$\begin{pmatrix}
0 & u^T \\
u & A
\end{pmatrix}$$
where $u \in \R^{n}$ and $A \in \SO(n)$. This gives a splitting $\mathfrak{g} = \mathfrak{so}(1, n) = \mathfrak{so}(n) \oplus \R^n = \mathfrak{h} \oplus \mathfrak{m}$ where $\mathfrak{m}$ is $\ad(\mathfrak{h})$-invariant, as is typical of symmetric spaces. 

Note that $\SO_0(1, n)$ acts on itself by left translation. This allows us to identify the tangent space at some point $u \in \SO_0(1, n)$ with the tangent space at the identity, which is also the Lie algebra $\mathfrak{g}$. The subgroup $\SO(n)$ acts by left and right translations on $\SO_0(1, n)$, and the induced $\Ad$-actions on $\mathfrak{g}$ preserve $\mathfrak{m}$. 

Remark that the projection $p : \SO_0(1, n) \to M$ sends $\mathfrak{m}$ isomorphically on $T_IM$. To link $TM$ with $\SO_0(1, n)$, we are thus naturally led to consider the associated bundle $\SO_0(1, n) \times_{\Ad \mathfrak{h}} \mathfrak{m}$. The projection $p_*$ has kernel $\mathfrak{h}$, which implies that for $g \in \SO_0(1, n), h \in \SO(n), m \in \mathfrak{m}$, 
$$p_* (gh \ad(h^{-1})m) = p_* (gmh) = p_* (gm).$$
This shows that $p_*$ induces an isomorphism $\SO_0(1, n) \times_{\Ad \mathfrak{h}} \mathfrak{m} \cong TM$. 

Then, to recover the entire frame bundle $FM$, we recall that the fibers $F_xM$ are the isometries $u: \R^n \to T_xM$. Note that $\SO_0(1, n)$ acts by isometries on the fibers of $\SO_0(1, n) \times_{\Ad \mathfrak{h}} \mathfrak{m}$, and a dimension count shows that every isometry is obtained this way. As a result, $FM$ is isomorphic to $\SO_0(1, n)$.

As a result, we can see $SM$ as $\SO_0(1, n) / \SO(n-1)$ where $\SO(n - 1) \leq \SO(n)$ is identified to the diagonal subgroup $diag(1, U)$.

Note that computations in bundles over $M, SM$ can be carried out only at the tangent space at identity, as every element introduced thus far is right-invariant. One can thus read the different elements introduced so far in a matrix form:
$$\begin{pmatrix}
0 & v & u^T \\
v & 0 & - w^T \\
u & w & A
\end{pmatrix}.$$
Here, $A \in \SO(n - 1)$ represents the vertical bundle $\mathbb{V} \to SM$. Modding out the bottom right block which represents $\SO(n)$, we recover $T_I M$ with the first basis vector $v = u(e_1)$, which we take as the direction of a geodesic. The normal bundle $v^\perp$ is then represented by $u$.

Modding out $A \in \SO(n - 2)$ instead, we obtain $T_I SM$ with the direction $X = E_{1, 2} + E_{2, 1}$ of the geodesic flow and the vertical/horizontal distributions $w, u$. The stable and unstable bundle are then represented by linear combinations of $u, w$ vectors. More precisely, a basis $U_2^+, \dots, U_n^+$ of $E_u$ is easily seen to be 
$$U_i^+ = - E_{1, i + 1} - E_{2, i + 1} - E_{i + 1, 1} + E_{i + 1, 2}.$$
Indeed, the geodesic flow acts by the exponential $\exp(tX)$. In the Lie algebra, this action becomes the bracket with $X$, and one easily computes $[X, U_j^+] = U_j^+$. Similarly, a basis $U_2^-, \dots, U_n^-$ of $E_s$ is
$$U_i^- = - E_{1, i + 1} + E_{2, i + 1} - E_{i + 1, 1} - E_{i + 1, 2}$$
with the relations $[X, U_j^-] = - U_j^-$.

Let then $R_{i, j} = E_{i+1, j+1} - E_{j+1, i+1}$ be the basis of $\mathfrak{so}(n - 2)$. We have the commutation relations $[U_i^+, U_j^+] = 0, [U_i^-, U_j^-] = 0, [U_i^+, U_i^-] = 2X, [U_i^+, U_j^-] = R_{i, j}$ for $i \neq j$. As a result, the curvature $\Omega$ satisfies for $i \neq j$:
$$\Omega(X, .) = 0, \quad \Omega(U_i^+, U_j^+) = 0 = \Omega(U_i^-, U_j^-),$$
$$\Omega(U_i^+, U_i^-) = 0, \quad \Omega(U_i^+, U_j^-) = R_{i, j}.$$

Then, if we identify $U_i^+ \in E_u$ with $e_i \in \mathcal{N}$ (see Section \ref{sec:background}), this computation shows that $F$ is the inclusion of the set $\Lambda^2 \R^{n-1}$ of skew-symmetric matrices in $\frk{gl}_{n-1}(\R)$.

\subsubsection{Homothetical curvature flows} \label{sec:exp_flow}

We now describe a way to deform flat flows in order to obtain new examples of extensions with smooth horizontal space. We begin with the case of extensions to line bundles (see \cite[pp 111-113]{cekic2024semiclassical} for more details on dynamical connections on line and circle bundles). Let $\varphi_t$ be an Anosov flow on a manifold $N$, $a \in \R$. We consider the extension $\Phi^a_t$ of $\varphi_t$ to $L = N \times \R$ defined by
$$\Phi_t(v, s) = (\varphi_t(v), s + ta) \ \forall v \in N, s \in \R.$$
Note that $\R_+^*$ acts by multiplication on $L$ and $\Phi^a_t$ commutes with this action; hence $(L, \Phi_t^a)$ is a $\R$ principal extension of $\varphi_t$.

There is a canonical choice of metric given by the unique (up to a scalar) translation invariant metric on $\R$ (which is just the standard metric on $\R$). Note that $\Phi^a_t$ acts by translation, hence preserves this metric. 

Remark also that the choice of a discrete subgroup $\Gamma = b \mathbb{Z} \subset \R$ allows us to construct various examples of isometric extensions of $\varphi_t$ to circle bundles, given by the $L / \Gamma$. 

A quick calculation shows:
\begin{lem} \label{lem_formula_central}
The dynamical connection form and curvature associated to $\Phi_t$ are given by 
$$\theta^a = \theta^0 + a \pi^* \lambda, \quad \omega^a = a d\lambda$$
where $\theta^0$ is the flat connection form on $L$ and $\pi: L \to N$ is the projection.
\end{lem}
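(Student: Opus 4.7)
I would work directly in the product trivialization $L = N \times \R$. The generator of $\Phi^a_t$ is $X^L = X + a\,\partial_s$, where $X$ denotes the lift of the generator of $\varphi_t$ to $L$ and $\partial_s$ is the fundamental vector field of the $\R$-action. Since $\Phi^a_t$ acts as $\varphi_t$ on the base and by pure translation on the fiber, its differential splits as $d\varphi_t \oplus \mathrm{id}_\R$ in the trivialization, so the only vectors that contract (resp.\ expand) under forward iteration are those with zero vertical component lying over $E_s^N$ (resp.\ $E_u^N$). Hence $E_s^L$ and $E_u^L$ are the $\theta^0$-horizontal lifts of $E_s^N$ and $E_u^N$, and the dynamical horizontal distribution is
\[ \HH^a = \R X^L \oplus E_s^L \oplus E_u^L. \]

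Next, I would verify that the 1-form $\theta^a := \theta^0 + a\pi^*\lambda$ is the principal $\R$-connection form cutting out $\HH^a$. The normalization $\theta^a(\partial_s) = 1$ and the $\R$-equivariance are both immediate, because $\pi^*\lambda$ is pulled back from $N$ and so vanishes on vertical vectors and is invariant under the vertical action, while $\theta^0 = ds$ already satisfies both properties. To check that $\ker \theta^a = \HH^a$, I evaluate on each summand of $\HH^a$: on lifts of $E_s^N$ and $E_u^N$ both $\theta^0$ and $\pi^*\lambda$ vanish (the latter by the contact condition $\ker \lambda = E_s^N \oplus E_u^N$), and on $X^L$ the two contributions balance out using $\lambda(X)=1$ together with $\theta^0(\partial_s)=1$. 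Since a principal connection form on an $\R$-bundle is uniquely determined by its horizontal distribution and normalization, this identifies $\theta^a$ with the dynamical connection form.

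Finally, the curvature formula drops out of the Cartan structure equation $\Omega^a = d\theta^a + \tfrac{1}{2}[\theta^a \wedge \theta^a]$. The structure group $\R$ being abelian, the bracket term vanishes and $\Omega^a = d\theta^a$. Because $\theta^0 = ds$ is closed and exterior differentiation commutes with pullback, one obtains $d\theta^a = a\,\pi^* d\lambda$, which descends to $a\,d\lambda$ on the base once horizontal $\R$-equivariant $2$-forms on $L$ are identified with $\R$-valued $2$-forms on $N$. There is no real obstacle in the argument; the only mild subtlety is the sign bookkeeping when verifying horizontality on $X^L$, and the curvature computation is then one line.
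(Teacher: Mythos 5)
The paper gives no written proof of this lemma (it is introduced with ``a quick calculation shows''), and your direct computation in the trivialization $L = N\times\R$ is exactly the intended calculation: identify $E^L_{s}$, $E^L_{u}$ with the $\theta^0$-horizontal lifts of $E_s$, $E_u$ (since $d\Phi^a_t = d\varphi_t\oplus\identity$, any vector with nonzero vertical component neither contracts nor expands), characterize the dynamical connection form by vanishing on $\R X^L\oplus E_s^L\oplus E_u^L$ together with the normalization $\theta^a(\partial_s)=1$ and $\R$-equivariance, and read off the curvature from $d\theta^a$ because the structure group is abelian. That is the right route, and the curvature step is indeed one line once the connection form is identified.

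The one step you wave through, however --- ``the two contributions balance out'' on $X^L$ --- is precisely where the computation does not close as written. With the conventions in force ($\Phi^a_t(v,s)=(\varphi_t(v),s+ta)$, hence $X^L = X + a\,\partial_s$; $\theta^0 = ds$; $\lambda(X)=1$), one gets $(\theta^0 + a\,\pi^*\lambda)(X^L) = a + a = 2a$, not $0$: the two contributions add rather than cancel. The form annihilating the dynamical horizontal distribution is $\theta^0 - a\,\pi^*\lambda$, with curvature $-a\,d\lambda$; equivalently, the displayed formulas correspond to the flow $(v,s)\mapsto(\varphi_t(v),s-ta)$, i.e.\ to replacing $a$ by $-a$. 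This sign discrepancy is arguably already present in the statement relative to the paper's own definition of $\Phi^a_t$, and it is immaterial for the way the lemma is used later (only the proportionality $\omega^a \in \R\, d\lambda$ with factor $\pm a$ matters, cf.\ Lemma \ref{lem_central_line}); but since you explicitly single out the sign bookkeeping on $X^L$ as the only subtlety, you need to actually carry it out: as asserted, the cancellation fails, so your verification that $\ker\theta^a$ equals the dynamical horizontal space is wrong until the sign is fixed.
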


Equivalently, $F: N \times \R \cong \Ad(L) \to \End(E_u) \cong N \times \R$ is the fiberwise homothety of factor $a$.

This construction may be twisted by representations of the fundamental groups just as in the case of flat flows. In fact, we have the following characterization:
\begin{lem} \label{lem_central_line}
Let $(P, \Psi_t)$ be an isometric, $G$ principal extension of $\varphi_t$ such that $G$ has dimension 1.

If the associated curvature form $\omega$ is given by $a d\lambda$ for some $a \in \R$, then, $(P, \Psi_t)$ can be obtained from the method described above. In particular, $P$ is a flat bundle.
\end{lem}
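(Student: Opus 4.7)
The plan is to explicitly modify the dynamical connection form $\Theta$ of $(P,\Psi_t)$ by a basic $1$-form to obtain a flat principal connection, and then read off from this modification that $\Psi_t$ coincides with a homothetical curvature flow in the resulting flat trivialization. Since $\dim G = 1$, the Lie algebra $\frk{g}$ is abelian; fix an identification $\frk{g}\cong\R$ via a generator $\xi$, so that both $\Theta$ and $\omega$ become real-valued forms. The curvature of $\Theta$ is then simply $\Omega = d\Theta$ (the bracket term vanishes), and since $\frk{g}$ is abelian $\Omega$ is basic, i.e.\ $\Omega = \pi^*\omega = a\,\pi^*d\lambda$.

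Define $\Theta' \coloneq \Theta - a\,\pi^*\lambda$. Because $\pi^*\lambda$ is horizontal and $G$-invariant (as $\lambda$ is a form on the base), $\Theta'$ is again a principal connection form on $P$. Its curvature is
\[
 d\Theta' = d\Theta - a\,\pi^*d\lambda = \Omega - a\,\pi^*d\lambda = 0,
\]
so $\Theta'$ is flat. Pulling back to the universal cover $q\colon \widetilde N \to N$, the connection $q^*\Theta'$ flatly trivializes $q^*P \cong \widetilde N \times G$, and $P$ is reconstructed as $\widetilde N \times_\rho G$ where $\rho\colon \pi_1(N)\to G$ is the holonomy representation of $\Theta'$. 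This already yields that $P$ is a flat bundle.

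It remains to identify $\Psi_t$ with the construction of Section~\ref{sec:exp_flow}. Lift $\Psi_t$ to $\widetilde\Psi_t$ on $q^*P$ and use the flat trivialization provided by $\Theta'$. By definition of the dynamical connection, flow lines of $\Psi_t$ are $\Theta$-horizontal, hence for a flow orbit $s \mapsto \tilde\varphi_s(v)$ the vertical component of $\widetilde\Psi_t$ with respect to $\Theta'$ is
\[
 \int_0^t \bigl(\Theta - \Theta'\bigr)(X_{\tilde\varphi_s(v)})\,ds = \int_0^t a\,\lambda(X)\,ds = at,
\]
using $\lambda(X)=1$. In the trivialization $q^*P \cong \widetilde N\times G$ this gives $\widetilde\Psi_t(v,u) = (\tilde\varphi_t(v),\, u + at\,\xi)$, which is exactly the lifted homothetical curvature flow from Definition~\ref{def:flat}/Section~\ref{sec:exp_flow}.

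The only genuinely delicate step is verifying that $\Theta'$ really is a principal connection form (i.e.\ that $\pi^*\lambda$ is basic in the appropriate sense), but this is immediate from the fact that $\lambda$ lives on the base and $G$ acts by translations on a one-dimensional abelian Lie algebra; the remaining arguments are essentially formal manipulations with flat bundles. One mild subtlety to be careful with is the case $G = S^1$: the translation $u \mapsto u + at\xi$ must be interpreted modulo the lattice, which is automatic because the construction in Section~\ref{sec:exp_flow} explicitly allows quotienting $\R$ by a discrete subgroup.
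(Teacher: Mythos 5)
Your proof is correct and follows essentially the same route as the paper: subtract $a\,\pi^*\lambda$ from the dynamical connection form to obtain a flat connection $\Theta'$ (hence $P$ is flat), then recognize $\Psi_t$ as the flat parallel-transport flow composed with a constant vertical drift, i.e.\ the flow $\Phi_t^a$ of Section \ref{sec:exp_flow}. You simply spell out the details the paper leaves implicit (that $\Theta'$ is again a principal connection, the explicit drift computation on the universal cover, and the $S^1$ case), which is fine; any sign in the drift is immaterial since $a$ ranges over all of $\R$ in the construction.
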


\begin{rmq}
Conversely, it follows from \cite[Lemma 5.1, Corollary 5.2]{hamenstadt1995invariant} that if $P$ is a trivial bundle, then the associated curvature form is exact and thus it is necessarily a constant multiple of $d\lambda$.
\end{rmq}

\begin{proof}
Let $\theta$ be the dynamical connection form associated to $\Psi_t$ and consider $\theta^0 = \theta - a \pi^* \lambda$. By construction, $\theta^0$ is a flat connection form, hence $P$ is flat. Moreover, define $\Phi_t^0$ the flow of parallel transport for $\theta^0$ along trajectories of $\varphi_t$; this is a flat flow by construction and $\Psi_t = \Phi_t^a$.
\end{proof}

\begin{ex}
Such a flow does appear in a geometrical context. Indeed, let $\varphi_t = \phi_t$ be the geodesic flow on a real hyperbolic manifold $M$, then the flow $\det d\phi_t$ acting on the density bundle of the unstable bundle $E_u \subset TSM$ of $\phi_t$ is an example of such a flow with curvature $d\lambda$, see \cite[Section 5]{hamenstadt1995invariant}. 
\end{ex}

We may rewrite the condition $\omega = a d\lambda$ as follows. The adjoint bundle $\Ad(P) \cong N \times \R$ is trivial. Then, in these coordinates, we can write 
$$F(v, x) = (v, ax \identity_{\End(E_u)\vert_v})$$
for $v \in N, x \in \R$. This motivates the following definition:
\begin{df} \label{def:central}
Let $(P, \Phi_t^P)$ be an isometric extension of $\varphi_t$ such that the associated curvature map $F$ is well-defined. We say that $\Phi_t^P$ has \emph{homothetical curvature} if $F$ is valued in the subbundle of homotheties of $\End(E_u)$.
\end{df}

\begin{rmq}
We notice that a similar, but stronger condition on the dynamical curvature has been introduced in \cite[Theorem 4.1.1]{cekic2024semiclassical} as an obstruction to the rapid mixing property for $\Phi_t$. Both conditions become equivalent when the center of $G$ has dimension $\leq 1$.
\end{rmq}

We can then generalize the previous lemma as follows:
\begin{lem} \label{lem_central_flat}
Let $(P, \Phi_t^P)$ be a homothetical curvature extension. 
\begin{enumerate}
\item[(1)] The classical holonomy Lie algebra $\frk{d}^P$ is central in the full holonomy Lie algebra $\frk{h}^P$ of $\Phi_t^P$;
\item[(1')] In particular, if $\Phi_t^P$ is ergodic and is not flat then the structure group $G$ of $P$ has a nondiscrete center.
\item[(2)] The bundle $P$ is always flat. 
\end{enumerate}
\end{lem}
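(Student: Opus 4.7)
The argument splits naturally along the three items. For (1), I would exploit the equivariance of Proposition \ref{pps_equivariance_master}: for every $\gamma \in \mathcal{P}_v$, $\Pi_{\End(E_u)}(\gamma) \circ F = F \circ \Pi_{\Ad(P)}(\gamma)^T$. The homothetical assumption means the image of $F$ lies in the line subbundle of homotheties of $\End(E_u)$, and conjugation fixes any homothety, so the left-hand side equals $F$ on $(\ker F_v)^\perp$. Hence $\Pi_{\Ad(P)}(\gamma)^T \xi - \xi \in \ker F$ for every $\xi \in (\ker F_v)^\perp$. The $\Ad$-invariant metric on $\frk{g}$ coming from the compactness of $G$ makes $\Pi^T$ orthogonal; in particular it preserves $(\ker F_v)^\perp$, which forces $\Pi^T \xi = \xi$ there. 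Transporting through the identification $(\ker F_v)^\perp \cong \frk{d}^P$ of Corollary \ref{cor_ambrose_singer} and differentiating the resulting trivial $\Ad$-action of $H^P$ gives $[\frk{h}^P, \frk{d}^P] = 0$; since $\frk{d}^P \subset \frk{h}^P$ by Corollary \ref{cor_inclusions}, this is precisely the centrality statement.

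Item (1') follows at once by combining (1), Theorem \ref{thm_ergod_frame} (ergodicity forces $\frk{h}^P = \frk{g}$) and Lemma \ref{lem_flat_flows} (non-flatness forces $\omega \neq 0$, hence $\frk{d}^P = (\ker F)^\perp \neq 0$): the nonzero $\frk{d}^P$ then sits in the center of $\frk{g}$, so the center of $G$ is nondiscrete.

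For (2) I would construct an explicit flat connection on $P$ by deforming $\Theta$. The case $F \equiv 0$ is handled directly by Lemma \ref{lem_flat_flows}, so assume $F \not\equiv 0$. By Corollary \ref{cor_rank} and the homothetical condition, $F$ has constant rank one with image $\R \identity_{E_u}$; solving $F(\xi_0) = \identity_{E_u}$ inside $(\ker F)^\perp$ yields a continuous section $\xi_0$ of $\Ad(P)$, and under the identifications of Section \ref{sec:curvature} one has $\omega = d\lambda \otimes \xi_0$. By (1) the classical holonomy $D^P$ acts trivially on $\frk{d}^P = \R \xi_0$, so $\xi_0$ is parallel for the dynamical connection; consequently its $G$-equivariant lift $\hat{\xi}_0 : P \to \frk{g}$ satisfies $d\hat{\xi}_0 + [\Theta, \hat{\xi}_0] = 0$. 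Setting $\Theta' := \Theta - \pi^* \lambda \otimes \hat{\xi}_0$ defines a new principal connection (horizontality of $\pi^*\lambda$ and equivariance of $\hat{\xi}_0$). A direct expansion of $\Omega' = d\Theta' + \tfrac{1}{2}[\Theta',\Theta']$, in which the cross terms $\pi^*\lambda \wedge d\hat{\xi}_0$ and $\pi^*\lambda \wedge [\Theta,\hat{\xi}_0]$ cancel by the parallelism identity, collapses to $\Omega' = \Omega - \pi^* d\lambda \otimes \hat{\xi}_0 = 0$. Thus $\Theta'$ is flat and $P$ is a flat bundle.

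The main technical point is the bookkeeping in the curvature computation of (2) -- in particular, correctly tracking the graded signs of brackets between $\frk{g}$-valued forms and invoking the parallelism identity at the right step. The conceptual engine throughout is the centrality statement (1), which forces $\xi_0$ to be parallel and thereby allows the modification of $\Theta$ to flatten the curvature.
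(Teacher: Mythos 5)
Your items (1) and (1') are correct and essentially at the level of the paper's argument, though the mechanism differs slightly: the paper simply observes that, by Corollary \ref{cor_ambrose_singer}, the homothetical hypothesis forces $\dim \frk{d}^P \leq 1$, so $\frk{d}^P$ is an Abelian ideal of the compact Lie algebra $\frk{h}^P$ and hence central; you instead use that conjugation fixes homotheties to conclude that the whole holonomy monoid fixes $(\ker F_v)^\perp$ pointwise, a slightly stronger (and equally valid) statement. For (2) your route is genuinely different. The paper passes to the universal cover and to the holonomy bundle $Q$ of $q^*\Theta$, whose structure group is $\R$ or $S^1$; there the curvature becomes a real-valued form $a\,d\lambda$, exactness forces $a$ to be constant, and the flat form $q^*\Theta - a\,q^*\pi^*\lambda$ is extended by right-invariance and descended to $P$. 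You flatten $\Theta$ directly on $P$ by subtracting $\pi^*\lambda \otimes \hat{\xi}_0$ for a parallel section of $\Ad(P)$, which avoids the cover, the reduction and the descent; your expansion of $\Omega'$ and the cancellation via the parallelism identity are correct. Two small repairs: (a) with the normalization $F(\xi_0) = \identity_{E_u}$ one gets $\omega = \Vert \xi_c \Vert^2\, d\lambda \otimes \xi_0$ rather than $d\lambda \otimes \xi_0$, where $\xi_c$ denotes the $g_{\Ad}$-dual of the homothety factor of $F$; either take $\xi_0 = \xi_c$, for which \eqref{eq:identifications} gives $\omega = d\lambda \otimes \xi_c$ on the nose, or carry the factor $a = \Vert \xi_c \Vert^2$ (constant, since the holonomies act isometrically on $\Ad(P)$ and fix homotheties) into $\Theta' = \Theta - a\,\pi^*\lambda \otimes \hat{\xi}_0$. (b) Parallelism of $\xi_0$ should be drawn from the $H^P$-equivariance of $F$ as in Corollary \ref{cor_ambrose_singer} (which your proof of (1) in fact establishes), not from the Lie-algebra statement (1) alone, and one should note that a parallel section of a $\mathcal{C}^1$ connection is automatically $\mathcal{C}^1$, so writing $d\hat{\xi}_0 + [\Theta, \hat{\xi}_0] = 0$ is legitimate even though $\xi_0$ is a priori only continuous. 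With these adjustments your proof is complete and arguably more direct than the paper's for item (2).
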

\begin{proof}
We first prove (1). We know that the classical dynamical holonomy group $D^P = \Hol^{\dyn}(P, \Phi_t^p, v)$ is normal in its closure $H^P = \overline{\Hol}^{\dyn}(P, \Phi_t^P, v)$. For the Lie algebra $\frk{d}^P$ to be central in $\frk{h}^P$, it is thus enough to show that it is Abelian, as an Abelian ideal in a compact Lie algebra is central.

Let $q: \widetilde{N} \to N$ be the universal cover. The dynamical connection form $\Theta$ lifts to a connection form on $q^* P$, with holonomy group $D_0^P$ where $D^P$ is the classical holonomy group of $\Phi_t^P$. In particular, there exists a reduction of $(P, \Phi_t^P)$ to any holonomy bundle $Q$ associated to $q^* \Theta$ by Remark \ref{rmq_classical}. 

The Lie algebra $\frk{d}^P$ of $D^P$ is generated by the orthogonal of the kernel of $F^P$ by Corollary \ref{cor_ambrose_singer}. This implies that $\frk{d}^P$ has dimension $\leq 1$, hence is Abelian; this proves (1), and (1') follows from Theorem \ref{thm_ergod_frame}.

We now prove (2). If $\frk{d}^P$ is zero, then $\Theta$ is flat and we are done. Else, the structure group $D_0^P$ of $Q$ is isomorphic to $\R$ or $S^1$. Then, the connection 2-form $\omega$ associated to $q^* \Theta \vert_Q$ is a continuous, exact (as the structure group is Abelian) 2-form proportional to $d\lambda$ by assumption. Denote $\omega = a d\lambda$, then as $\omega$ is exact necessarily $a$ is constant. 

Following the proof of Lemma \ref{lem_central_line}, $\theta^0 = q^* \Theta - a q^* \pi^* \lambda \vert_Q$ defines a flat connection form on $Q$. This form naturally extends by right-invariance to $q^* P$ and is invariant under the action of the fundamental group $\pi_1(N)$ as the forms $q^* \Theta, q^* \pi^* \lambda$ as well as the right $G$ action are. We thus obtain a flat connection on $P$.
\end{proof}

\subsection{About the equivariance} \label{sec:equivariance}

We now present technical results that allow us to extract finer properties of the curvature map by exploiting its equivariance more effectively. To avoid confusion, we introduce a separate set of notations for this discussion.

We consider $V_1, V_2$ two finite dimensional vector spaces, $G_1, G_2$ two Lie subgroups of $\GL(V_1), \GL(V_2)$, $\mathcal{P}$ a monoid and $\rho_i: \mathcal{P} \to G_i$ two representations. We also denote $H_i$ the closure of the subgroups generated by $\rho_i(\mathcal{P})$ in $G_i$. Although it is not needed, one can assume that the images $\rho_i(\mathcal{P})$ are groups just as in the context of representations of path monoids. We study linear maps $f: V_1 \to V_2$ which are $\rho_1, \rho_2$-equivariant:
$$\forall \gamma \in \mathcal{P}, \rho_2(\gamma) \circ f = f \circ \rho_1(\gamma).$$
Obviously, we want some compactness assumption of $G_1$ to mimick the compactness of $G$ in the previous section.

\begin{pps} \label{pps_eq_orbits_top}
Assume that $G_1$ is compact. Then, for any $h_2 \in H_2$, there exists $h_1 \in H_1$ such that $f \circ h_1 = h_2 \circ f$.
\end{pps}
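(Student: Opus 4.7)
The plan is to first extend the equivariance from elements of $\mathcal{P}$ to the subgroup $K_i \subset G_i$ generated by $\rho_i(\mathcal{P})$, and then pass to the closure $H_i = \overline{K_i}$ using compactness of $G_1$ to extract a limit.

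First, since $G_1, G_2$ are Lie subgroups of $\GL(V_i)$, each $\rho_i(\gamma)$ is invertible. Left-multiplying the equivariance relation $\rho_2(\gamma) \circ f = f \circ \rho_1(\gamma)$ by $\rho_2(\gamma)^{-1}$ and right-multiplying by $\rho_1(\gamma)^{-1}$ yields $f \circ \rho_1(\gamma)^{-1} = \rho_2(\gamma)^{-1} \circ f$. Iterating on the length of a word, for any finite sequence $\gamma_1, \dots, \gamma_n \in \mathcal{P}$ and signs $\varepsilon_1, \dots, \varepsilon_n \in \{\pm 1\}$, if we set $k_i = \rho_i(\gamma_1)^{\varepsilon_1} \cdots \rho_i(\gamma_n)^{\varepsilon_n}$, then $f \circ k_1 = k_2 \circ f$. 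In particular, for every $k_2 \in K_2$ there is some (not necessarily unique) $k_1 \in K_1$ with $f \circ k_1 = k_2 \circ f$.

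Next, given $h_2 \in H_2$, pick a sequence $(k_2^{(n)}) \subset K_2$ with $k_2^{(n)} \to h_2$. By the previous step, for each $n$ we may choose $k_1^{(n)} \in K_1$ such that $f \circ k_1^{(n)} = k_2^{(n)} \circ f$. Since $G_1$ is compact and $K_1 \subset G_1$, the sequence $(k_1^{(n)})$ admits a subsequence $k_1^{(n_j)} \to h_1$ in $G_1$. Because $H_1 = \overline{K_1}$ is closed, $h_1 \in H_1$. Passing to the limit in $f \circ k_1^{(n_j)} = k_2^{(n_j)} \circ f$ gives $f \circ h_1 = h_2 \circ f$, as required.

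The only real obstacle is the compactness assumption on $G_1$, which is what allows the extraction of a convergent subsequence of lifts $k_1^{(n)}$. Without this, the lifts could escape to infinity (as one sees by taking $V_1 = V_2$ and $f$ noninvertible, so that the fibers of the correspondence $k_2 \mapsto \{k_1 : f \circ k_1 = k_2 \circ f\}$ are unbounded), and there need not be a well-defined limit; this reflects the fact that there is no canonical homomorphism $H_2 \to H_1$ intertwining $f$, merely a closed correspondence whose compactness is needed to pass to limits.
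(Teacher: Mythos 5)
Your proof is correct and follows essentially the same route as the paper: approximate $h_2$ by elements of the group generated by $\rho_2(\mathcal{P})$, lift them via the equivariance, and use compactness of $G_1$ to extract a convergent subsequence of lifts, concluding by continuity. Your preliminary step extending the intertwining relation to inverses and words is a welcome bit of extra care (the paper passes over it, implicitly treating $h_2$ as a limit of elements $\rho_2(\gamma_k)$ with $\gamma_k \in \mathcal{P}$), but it does not change the substance of the argument.
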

\begin{proof}
Let $h_2 \in H_2$, then by definition there exists a sequence $\gamma_k$ of elements of $\mathcal{P}$ such that $\rho_2(\gamma_k) \to h_2$. Then, as $G_1$ is compact, there is a subsequence of $\rho_1(\gamma_k)$ converging to some $h_1 \in H_1$. The identity $h_2 f = f h_1$ is then consequence of the continuity of $f$. 
\end{proof}

\begin{rmq} \label{rmq_compact_H}
The compactness of $H_1$ translates into some compactness property for $H_2$. For example, the $H_2$-orbits of $V_2$ which are in the image of $f$ are the continuous images of $H_1$-orbits, which are compact.
\end{rmq}

The previous result crucially relies on the compactness of $G_1$. We first have some purely algebraic consequences of this equivariance. 

\begin{pps}
\label{pps_eq_orbits_alg}

Assume that $f$ is a linear \emph{isomorphism}. If, for every $h_2 \in H_2$, there exists $h_1 \in H_1$ such that 
$$f \circ h_1 = h_2 \circ f,$$
%and conversely for every $g_1 \in G_1$, there exists $g_2 \in G_2$ such that
%$$F \circ \rho_1(g_1) = \rho_2(g_2) \circ F,$$
then there is an injective morphism of Lie groups $\chi: H_2 \to H_1$ such that $f$ is $H_2$-equivariant:
$$\forall h_2 \in H_2, \forall x \in V_1, \  h_2 \cdot f(x) = f(\chi(h_2) \cdot x).$$
\end{pps}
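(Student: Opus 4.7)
The plan is to define $\chi: H_2 \to H_1$ directly by the formula $\chi(h_2) = f^{-1} \circ h_2 \circ f$. The first step is to check that this lands in $H_1$. Since $f$ is a linear isomorphism, the equation $f \circ h_1 = h_2 \circ f$ has a \emph{unique} solution $h_1 \in \GL(V_1)$, namely $h_1 = f^{-1} \circ h_2 \circ f$. The hypothesis guarantees the existence of such an $h_1$ lying in $H_1$; by uniqueness, this $h_1$ must coincide with $f^{-1} \circ h_2 \circ f$, so $\chi(h_2) \in H_1$ and $\chi$ is well defined.

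Next I would check the algebraic and topological properties. Conjugation by $f$ is a group homomorphism of $\GL(V_2)$ onto its image in $\GL(V_1)$, so the restriction $\chi$ satisfies $\chi(h_2 h_2') = \chi(h_2) \chi(h_2')$ for all $h_2, h_2' \in H_2$. Injectivity is immediate: $\chi(h_2) = \identity$ forces $h_2 = f \circ \identity \circ f^{-1} = \identity$. Continuity is also clear, since composition and inversion are continuous and $f, f^{-1}$ are fixed; hence $\chi$ is a morphism of Lie groups. Finally, the equivariance is built into the definition: for every $x \in V_1$ and $h_2 \in H_2$,
\[
f(\chi(h_2) \cdot x) = f(f^{-1} h_2 f x) = h_2 \cdot f(x).
\]

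There is essentially no obstacle here; the content of the statement is that once $f$ is invertible, the conjugating element $h_1$ is uniquely determined, so the hypothesis automatically upgrades from a pointwise existence statement to the existence of a canonical map $\chi$. Note that no compactness of $H_1$ or $H_2$ is required for this step; compactness of $G_1$ was used upstream in Proposition \ref{pps_eq_orbits_top} only to produce the hypothesis from the weaker equivariance under $\mathcal{P}$.
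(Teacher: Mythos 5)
Your proposal is correct and follows the same route as the paper: both define $\chi$ as conjugation by $f$ (the paper writes $\chi = \Ad f^{-1}$ restricted to $H_2$) and observe that the equivariance is then tautological. Your uniqueness argument for why $f^{-1} h_2 f$ must land in $H_1$ just spells out the step the paper compresses into "by assumption".
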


\begin{proof} Consider the mapping $\chi = \Ad f^{-1} : \End(V_2) \to \End(V_1)$ defined by $\chi(A) = f^{-1} \circ A \circ f$.
By assumption, the restriction $\chi \vert_{H_2}$ realizes a group morphism between $H_2$ and $H_1$, which is injective as a restriction. The fact that $f$ is equivariant with respect to this morphism is tautological.
\end{proof}

In general, one can assume that $f$ is an isomorphism onto its image by restricting and co-restricting it appropriately. Doing so will however force us to change $G_1, G_2$ and their subgroups by taking their images under the restriction maps.

\begin{pps} \label{pps_eq_orbits_repr}

Assume that $f$ is $(\rho_1, \rho_2)$-equivariant. Let $W_1 \subset V_1$ be an irreducible $H_1$-subrepresentation. Then either $W_1 \subset \ker f$ or $f : W_1 \to f(W_1)$ is a linear isomorphism and the image $f(W_1)$ is an irreducible $H_2$-subrepresentation.

In particular, if $V_1$ is $H_1$-irreducible, then $f$ is either 0 or injective, and in the latter case $\textrm{ran} f$ is an irreducible $H_2$-representation.
\end{pps}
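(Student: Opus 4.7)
The plan is to reduce everything to two basic structural facts: $\ker f$ is an $H_1$-subrepresentation of $V_1$, and for any $H_1$-invariant subspace $W_1 \subset V_1$ the image $f(W_1)$ is $H_2$-invariant in $V_2$. Both are immediate from the equivariance $\rho_2(\gamma) \circ f = f \circ \rho_1(\gamma)$ combined with continuity of $f$ and density of $\rho_i(\mathcal{P})$ in $H_i$. No compactness hypothesis is needed, so Proposition \ref{pps_eq_orbits_top} is not invoked here.

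First I would show that $\ker f$ is $H_1$-invariant. For $x \in \ker f$ and $h_1 \in H_1$, pick a sequence $\gamma_k \in \mathcal{P}$ with $\rho_1(\gamma_k) \to h_1$; then by continuity of $f$,
\[
f(h_1 \cdot x) = \lim_{k} f(\rho_1(\gamma_k) x) = \lim_k \rho_2(\gamma_k) f(x) = 0.
\]
Applying this to the irreducible $H_1$-subrepresentation $W_1$, the intersection $W_1 \cap \ker f$ is an $H_1$-invariant subspace of $W_1$, hence equals either $\{0\}$ or $W_1$. In the latter case $W_1 \subset \ker f$ and we are done; in the former case $f|_{W_1} : W_1 \to f(W_1)$ is a linear isomorphism.

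Next I would check that $f(W_1)$ is $H_2$-invariant. For any $\gamma \in \mathcal{P}$ and $w \in W_1$, one has $\rho_2(\gamma) f(w) = f(\rho_1(\gamma) w) \in f(W_1)$ since $W_1$ is $H_1$-invariant. As $f(W_1)$ is finite dimensional, hence closed in $V_2$, and $\rho_2(\mathcal{P})$ is dense in $H_2$, a passage to the limit gives $H_2 \cdot f(W_1) \subset f(W_1)$.

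Finally, for irreducibility, let $U \subset f(W_1)$ be a nonzero $H_2$-invariant subspace and consider $W' := (f|_{W_1})^{-1}(U) \subset W_1$. For $\gamma \in \mathcal{P}$ and $x \in W'$, $f(\rho_1(\gamma) x) = \rho_2(\gamma) f(x) \in U$, so $\rho_1(\mathcal{P}) W' \subset W'$; since $U$ is closed in $f(W_1)$ and $f|_{W_1}$ is a continuous linear isomorphism, $W'$ is closed in $W_1$, and a density argument as above shows that $W'$ is $H_1$-invariant. As $f|_{W_1}$ is bijective, $W'$ is nonzero, so irreducibility of $W_1$ forces $W' = W_1$, i.e.\ $U = f(W_1)$. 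The "in particular" statement follows by taking $W_1 = V_1$. The argument is routine; the only care required is to ensure every limiting step takes place in a finite-dimensional (hence closed) subspace, which is automatic in the present setting.
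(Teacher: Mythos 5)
Your proof is correct, and its skeleton is the same as the paper's: show $\ker f$ is $H_1$-invariant, use irreducibility of $W_1$ to get the dichotomy, show $f(W_1)$ is $H_2$-invariant, and pull back a nonzero $H_2$-subrepresentation through the isomorphism $f\vert_{W_1}$ to conclude irreducibility of the image. The one genuine difference is the invariance of the image: the paper obtains stability of $f(W_1)$ under $H_2$ by invoking Proposition \ref{pps_eq_orbits_top}, whose hypothesis is compactness of $G_1$, whereas you prove it directly from equivariance on $\rho_2(\mathcal{P})$, closedness of the finite-dimensional subspace $f(W_1)$, and density. Your route is therefore slightly more economical and matches the statement of the proposition, which does not assume $G_1$ compact; the paper's route is shorter to write but imports a hypothesis that is only needed elsewhere (and does hold in all the paper's applications).

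One small point of care, which affects your write-up and the paper's equally: $H_i$ is defined as the closure of the \emph{group generated by} $\rho_i(\mathcal{P})$, so "pick $\gamma_k$ with $\rho_1(\gamma_k)\to h_1$" is a slight abuse unless $\rho_i(\mathcal{P})$ is already a group (as it is for path monoids, and as the paper allows one to assume). If you want to avoid even that, note that in finite dimension a subspace invariant under an invertible operator is invariant under its inverse, so invariance under all $\rho_i(\gamma)$ already gives invariance under the generated group, and closedness of the subspace then gives invariance under its closure $H_i$. Your remark that every limiting step happens in a closed (finite-dimensional) subspace is exactly the right observation to make this airtight.
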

\begin{proof} Remark first that $\ker f$ is $H_1$-invariant; indeed it is invariant under elements of $\rho_1(\mathcal{P})$ by assumption, and these are dense by definition of $H_1$. Consider now $W_1 \subset V_1$ a $H_1$-irreducible subrepresentation; then $\ker f \cap W_1$ is a subrepresentation of $W_1$, which implies that $W_1 \subset \ker f$ or $\ker f \cap W_1 = \{0\}$ by irreducibility. In this latter case, $f: W_1 \to W_2$ is an isomorphism and $W_2 = f(W_1)$ is stable under $H_2$ by Proposition \ref{pps_eq_orbits_top}. Further, if $W'_2 \subset W_2$ is a nonzero $H_2$-subrepresentation, then $W_1' = (f \vert_{W_1})^{-1}(W_2') \subset W_1$ is a $H_1$-subrepresentation, as elements of $\rho_1(\mathcal{P})$ are dense and preserve $W_1'$ by equivariance. Thus by irreducibility $W_1 = W_1'$ and $W_2 = f(W_1) = W_2'$, which proves that $W_2$ is irreducible.
\end{proof}

This is in contrast with the previous Proposition \ref{pps_eq_orbits_alg} where the roles of $V_1$ and $V_2$ are exchanged. More precisely, in the context of Proposition \ref{pps_eq_orbits_alg}, if $V_2$ is $H_2$-irreducible, then $V_1$ is $H_2$-irreducible as well, hence it is $H_1$-irreducible as $H_2 \hookrightarrow H_1$. But the previous result gives a converse: if $V_1$ is $H_1$-irreducible, then $V_2$ is $H_2$-irreducible as well.

\section{Proof of the main result} \label{sec:proof_kanai}

We are now ready to tackle the proof of Theorem \ref{thm_conj_Kanai}. The proof is done in three steps. First, we reduce the problem of Kanai to the following: under the assumptions of Theorem \ref{thm_conj_Kanai}, show that there exists a flow-invariant conformal structure on the unstable bundle $E_u$ associated to the geodesic flow (Lemma \ref{lem_reduction}). Then, we show that if the dynamical curvature is parallel in the sense of Proposition \ref{pps_equivariance_master}, then such a conformal structure exists in the two following contexts:
\begin{itemize}
\item When the structure group $\SO(n-1)$ of the frame bundle $FM \to SM$ is simple ($n \neq 3, 5$) and the frame flow is ergodic;
\item When the dimension $n$ of $M$ is odd.
\end{itemize}

\subsection{Preparatory phase} \label{sec:prep}

The proof of Theorem \ref{thm_conj_Kanai} relies on the following reduction:
\begin{lem} \label{lem_reduction}
Let $M$ be a strictly $1/4$-pinched, negatively curved manifold. Fix a base point $v \in SM$ and denote 
$$H^{E_u} = \overline{\Hol}^{\dyn}(E_u, d\phi_t, v)$$
the full dynamical holonomy group at $v$ associated to the flow $d\phi_t$, constructed in Theorem \ref{thm_stable_hol}.

If $H^{E_u}$ preserves the conformal class $[h_v]$ of a certain Riemannian metric $h_v$ on the fiber $E_u \vert_v$, then $M$ is homothetic to a real hyperbolic manifold.
\end{lem}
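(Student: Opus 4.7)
The plan is to use the dynamical holonomies from Theorem \ref{thm_stable_hol} to propagate the conformal class $[h_v]$ from the single fiber $E_u|_v$ to every fiber of $E_u$, producing a continuous, $\phi_t$-invariant conformal structure on the whole unstable bundle. Once such a structure is obtained, the statement reduces to the original arguments of Kanai \cite{kanai1993differential}: a flow-invariant conformal structure on $E_u$ over a strictly $1/4$-pinched manifold forces $\phi_t$ to be conjugate to the geodesic flow of a locally symmetric space, and Besson-Courtois-Gallot \cite{besson1995entropies} then identifies $M$, up to homothety, with a real hyperbolic manifold.

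To spread $[h_v]$, I would mimic the proof of Proposition \ref{pps_hol_bdle}. For each $w \in SM$, pick a csu-path $\gamma \in \mathcal{P}_{v,w}$---existence is guaranteed by Lemma \ref{lem_exist_csu}---and set
\[
[h_w] \coloneq \bigl(\Pi_{E_u}(\gamma)\bigr)_{*} [h_v] \qquad \text{on } E_u|_w,
\]
where $\Pi_{E_u}(\gamma)$ denotes the concatenation of the stable, central and unstable holonomies of $d\phi_t$ along the successive factors of $\gamma$. Well-definedness is the first check: if $\gamma' \in \mathcal{P}_{v,w}$ is another choice, the concatenation $\gamma' \cdot \gamma^{-1}$ (reorganised as a csu-path in $\mathcal{P}_v$, which is possible because $W^{s}$-, $W^{c}$- and $W^{u}$-segments can be traversed in either direction) yields
\[
\Pi_{E_u}(\gamma)^{-1} \circ \Pi_{E_u}(\gamma') \in \Pi_{E_u}(\mathcal{P}_v) \subset H^{E_u},
\]
which preserves $[h_v]$ by hypothesis, so the two candidates for $[h_w]$ coincide.

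Continuity of $w \mapsto [h_w]$ is then the same local product structure argument already used in the proof of Proposition \ref{pps_hol_bdle}: near any $w_0 \in SM$, one has a continuous family of short csu-paths from $w_0$ to nearby points, and the stable, central, and unstable holonomies depend continuously on their endpoints by Theorem \ref{thm_stable_hol}. Flow invariance is built into the construction, since central holonomy is exactly $d\phi_t|_{E_u}$; hence $[h_{\phi_t w}] = (d\phi_t)_{*} [h_w]$ for every $w \in SM$ and every $t \in \mathbb{R}$.

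The main subtle point will be well-definedness, which relies essentially on the fact that the \emph{full} holonomy group $H^{E_u}$, and not merely some proper subgroup, preserves $[h_v]$. A secondary technical issue is that Kanai's original arguments are stated for $\mathcal{C}^1$-regular invariant conformal structures whereas the one we produce is only continuous; this gap can be closed either by invoking Hamenstädt's extension \cite{hamenstadt1995invariant} to continuous flow-invariant two-forms, or by a regularity bootstrap using the $\mathcal{C}^{1/\sqrt{\delta}}$ hypothesis on $\mathbb{H}$ together with the formulas relating the dynamical connection and the induced conformal structure on $E_u$. Beyond these two points, all remaining verifications are routine.
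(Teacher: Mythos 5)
Your proposal is correct and follows essentially the same route as the paper: propagate $[h_v]$ to every fiber by holonomy along csu-paths (Lemma \ref{lem_exist_csu}), get well-definedness from the hypothesis that the full group $H^{E_u}$ preserves $[h_v]$, continuity from the continuity of holonomies in Theorem \ref{thm_stable_hol}, flow-invariance because $d\phi_t$ is the central holonomy, and then conclude via \cite{kanai1993differential} and \cite{besson1995entropies}. The only divergence is your worry about regularity: the paper applies \cite[Theorem 1]{kanai1993differential} directly to the merely continuous invariant conformal structure (the $\mathcal{C}^1$ vs.\ continuous issue raised in the introduction concerns invariant $2$-forms in the dimension-$3$ discussion, not this step), so no Hamenstädt-type upgrade or bootstrap is needed here.
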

\begin{proof}
We define a conformal structure on $E_u$ as follows. Given any $w \in SM$, there exists a flow-stable-unstable path $\gamma$ (see Lemma \ref{lem_exist_csu}) between $v$ and $w$; then the associated holonomy $\Pi_{E_u}(\gamma)$ maps $E_u \vert_v$ to $E_u \vert_w$. Denote $[h_w]$ the pullback of $[h_v]$ by $\Pi_{E_u}(\gamma)^{-1}$. As $H^{E_u}$ preserves $[h_v]$, it follows that the conformal class $[h_w]$ does not depend on the choice of $\gamma$.

As a result, we obtain a conformal structure on $E_u$ which is moreover invariant under dynamical holonomies by construction. In particular, it is invariant under the action of $d\phi_t$, which acts by dynamical holonomies by construction. Moreover, this conformal structure is continuous, as the dynamical holonomies are continuous by Theorem \ref{thm_stable_hol}.

Then, it follows from \cite[Theorem 1]{kanai1993differential} that the geodesic flow $\phi_t$ on $SM$ is homothetic to the geodesic flow $\phi_t^0$ on the unit tangent bundle $SM^0$ of a real hyperbolic manifold $M^0$. By the main theorem of \cite{besson1995entropies}, this conjugacy is induced by an homothety between $M$ and $M^0$; hence the result.
\end{proof}

We end this section with the following useful lemmas:
\begin{lem} \label{lem_norm_so}
Let $G$ be a connected, closed subgroup of $\SO(k)$ acting transitively on the sphere $S^{k-1} \subset \R^k$. Then, the normalizer of the Lie algebra $\frk{g}$ of $G$ is contained in the conformal group $\CO(k)$.
\end{lem}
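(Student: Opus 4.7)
The plan is to use that transitivity of $G$ on $S^{k-1}$ implies $G$ acts irreducibly on $\R^k$, and then to invoke a Schur-type argument showing that any element normalizing $\frk{g}$ must preserve the standard inner product up to a positive scalar, which is precisely conformality.

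First I would establish irreducibility: if $W \subset \R^k$ is a nonzero $G$-invariant subspace and $v \in W$ is a unit vector (with respect to the standard inner product $g_0$), then the orbit $G \cdot v \subset W$ equals $S^{k-1}$ by transitivity, so $W = \R^k$.

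Next, let $A \in \GL(k)$ satisfy $A \frk{g} A^{-1} = \frk{g}$, and define a new inner product $g_1$ by $g_1(x, y) := g_0(Ax, Ay)$. For any $X \in \frk{g}$, the element $A X A^{-1}$ lies in $\frk{g} \subset \frk{so}(g_0)$; translating this condition back across $A$ shows that $X$ is skew with respect to $g_1$, so $\frk{g} \subset \frk{so}(g_1)$. Since $G$ is connected and generated by $\exp(\frk{g})$, the group $G$ preserves $g_1$ as well as $g_0$.

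Finally, I would introduce the $g_0$-self-adjoint, positive-definite operator $B: \R^k \to \R^k$ defined by $g_1(x, y) = g_0(Bx, y)$. It commutes with the action of $G$, since both $g_0$ and $g_1$ are $G$-invariant, and therefore its eigenspaces are $G$-invariant subspaces of $\R^k$. Because $B$ is real-diagonalizable, irreducibility forces $B = \lambda I$ for some $\lambda > 0$, whence $g_1 = \lambda g_0$; this is exactly the statement $A \in \CO(k)$. The only delicate point, scarcely an obstacle, is that one sidesteps the real/complex/quaternionic trichotomy in the real form of Schur's lemma by exploiting the self-adjointness of $B$.
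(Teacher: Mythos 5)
Your proof is correct, but it follows a genuinely different route from the paper. The paper's argument is purely metric and uses transitivity directly: since $A\frk{g}A^{-1}=\frk{g}$, exponentiating and using connectedness gives $AGA^{-1}\subset G$, and then for $x,y\in S^{k-1}$ with $Ux=y$ one computes $\vert Ay\vert=\vert AUx\vert=\vert (AUA^{-1})Ax\vert=\vert Ax\vert$, so $A$ maps the unit sphere onto a sphere $rS^{k-1}$ and $r^{-1}A$ is orthogonal. You instead use transitivity only to deduce irreducibility of the $G$-action on $\R^k$, and then run a Schur-type argument: the pulled-back metric $g_1=A^*g_0$ is $G$-invariant (your verification that $\frk{g}\subset\frk{so}(g_1)$ and the passage to $G$ via connectedness are both sound), the positive $g_0$-self-adjoint intertwiner $B$ commutes with $G$, and its eigenspace decomposition forces $B=\lambda I$, i.e.\ $g_1=\lambda g_0$ and $\lambda^{-1/2}A\in\Or(k)$. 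Both arguments land on the same conclusion (that $A$ is a similarity); what your version buys is generality, since it only needs irreducibility of $G$ on $\R^k$ rather than transitivity on $S^{k-1}$, at the cost of slightly more machinery, whereas the paper's proof is shorter and exploits the sphere-transitivity hypothesis in one line.
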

\begin{proof}
Let $A$ be an element of $\GL_k(\R)$ normalizing $\frk{g}$. Then, for any $X \in \frk{g}$, we have $A X A^{-1} \in \frk{g}$. Taking the exponential, we see that for any $U \in G$, we have $A U A^{-1} \in G$. 

By assumption, given $x, y \in S^{k-1}$, there exists $U \in G$ such that $U(x) = y$. Let $V = A U A^{-1} \in G$, then $\vert Ay \vert = \vert AUx \vert = \vert VAx \vert = \vert Ax \vert$, which shows that $A$ sends the unit sphere to some other sphere $r S^{k-1}$. As a result, $r^{-1} A \in \Or(k)$, which shows that $A$ is a conformal transformation of $\R^k$.
\end{proof}

Incidentally, groups acting transitively on the sphere have been classified (see for instance \cite[Theorem 4.8]{yasukura1986orthogonal}).

\begin{lem} \label{lem_so_to_conf}
Let $H$ be a closed subgroup of $\GL_k(\R), k \geq 3$ such that $\SO(k) \subset H$ and $H$ acts by conjugacy on some subspace $E \subset \End(\R^k)$ of dimension $d$ with $1 < d < k^2 - 1$. Then $H$ is contained in the conformal group $\CO(k)$.
\end{lem}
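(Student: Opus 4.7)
The plan is to study the Lie algebra $\mathfrak{h} = \mathrm{Lie}(H)$ through its $\SO(k)$-isotypic decomposition inside $\mathfrak{gl}_k(\R)$. Under the adjoint (conjugation) action of $\SO(k)$, one has the standard splitting
\[
\mathfrak{gl}_k(\R) = \R \cdot \identity \oplus \Sym_0^2(\R^k) \oplus \mathfrak{so}(k),
\]
into three pairwise non-isomorphic $\SO(k)$-irreducible submodules (a classical fact that holds for all $k \geq 3$, including $k = 4$, where $\Sym_0^2(\R^4)$ is the $9$-dimensional irreducible $(V_1 \boxtimes V_1')$-representation of $\mathrm{Spin}(4) = \SU(2)\times\SU(2)$). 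Since $\SO(k) \subset H$, the subspace $\mathfrak{h}$ is $\SO(k)$-stable and contains $\mathfrak{so}(k)$, hence decomposes as a direct sum of some of these three summands. The whole argument reduces to ruling out the appearance of $\Sym_0^2(\R^k)$.

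The heart of the proof is the following dichotomy, which I expect to be the main step. Suppose by contradiction that $\mathfrak{h} \cap \Sym_0^2(\R^k) \neq \{0\}$. Then by irreducibility $\mathfrak{h}$ contains the whole summand, so
\[
\mathfrak{h} \supset \Sym_0^2(\R^k) \oplus \mathfrak{so}(k) = \mathfrak{sl}_k(\R),
\]
which forces the identity component $H_0$ to contain the connected simple Lie group $\SL_k(\R)$. Under the conjugation action of $\SL_k(\R)$, the decomposition $\End(\R^k) = \R \cdot \identity \oplus \mathfrak{sl}_k(\R)$ is the splitting into irreducible summands (the second factor is irreducible because $\mathfrak{sl}_k(\R)$ is simple, its only $\Ad$-invariant subspaces being its ideals). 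Consequently, the only $\SL_k(\R)$-invariant subspaces of $\End(\R^k)$ have dimensions $0$, $1$, $k^2-1$, or $k^2$. Since $E$ is $H$-invariant hence $\SL_k(\R)$-invariant, this contradicts the hypothesis $1 < d < k^2 - 1$. This is where the precise dimension assumption on $E$ is used, and I expect to check carefully the $\SO(k)$-irreducibility of $\Sym_0^2(\R^k)$ in the low-dimensional cases $k = 3, 4$.

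Having excluded that possibility, we have $\mathfrak{h} \subset \R \cdot \identity \oplus \mathfrak{so}(k) = \mathfrak{co}(k)$, and therefore $H_0 \subset \CO(k)$. To extend the conclusion to the whole group $H$, I observe that each $h \in H$ normalizes the identity component $H_0$ and hence its Lie algebra $\mathfrak{h}$. Because $\mathfrak{so}(k)$ is perfect for $k \geq 3$ while the scalar line is central, the derived ideal $[\mathfrak{h}, \mathfrak{h}]$ coincides with $\mathfrak{so}(k)$, so $h$ normalizes $\mathfrak{so}(k)$. Since $\SO(k)$ acts transitively on $S^{k-1}$, Lemma \ref{lem_norm_so} yields $h \in \CO(k)$, completing the proof.
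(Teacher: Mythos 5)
Your proof is correct and follows essentially the same route as the paper: establish the dichotomy that either $\frk{h} \subset \frk{co}(k)$ or $\frk{h} \supset \frk{sl}_k(\R)$ (the paper cites maximality of $\PSO(k)$ in $\PSL(\R^k)$, you reprove it infinitesimally via the $\SO(k)$-decomposition of $\frk{gl}_k(\R)$), exclude the large case because the only $\SL_k(\R)$-invariant subspaces of $\End(\R^k)$ have dimension $0,1,k^2-1,k^2$, and then pass from $H_0$ to $H$ by observing that every $h \in H$ normalizes $\frk{so}(k)$ (you via $[\frk{h},\frk{h}]$, the paper via $\frk{h}\cap\frk{sl}_k(\R)$) and invoking Lemma \ref{lem_norm_so}. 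The one inaccuracy is your parenthetical claim that for $k=4$ the splitting $\R\,\identity \oplus \Sym_0^2(\R^4) \oplus \frk{so}(4)$ is into three irreducibles — $\frk{so}(4)$ splits further as $\Lambda^2_+ \oplus \Lambda^2_-$ — but this is harmless: the argument only needs that $\Sym_0^2(\R^k)$ is irreducible and that the full decomposition is multiplicity-free, both of which hold for all $k \geq 3$, so the dichotomy and the rest of the proof go through unchanged.
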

\begin{proof}
Let $PH_0$ be the projection of the identity component $H_0$ in $\PGL(\R^k)$. By assumption, $\PSO(k) \subset PH_0$ and by connectedness, $PH_0 \subset \PSL(\R^k)$. As $\PSO(k)$ is a maximal connected Lie subgroup of $\PSL(\R^k)$, we have $PH_0 = \PSO(k)$ or $PH_0 = \PSL(\R^k)$.

We claim that $PH_0 = \PSO(k)$. Indeed, remark first that $PH_0$ acts on $E$. The action of $\PSL(\R^{k})$ on $\End(\R^{k})$ splits as 
$$\End(\R^{k}) = \R I_{k} \oplus \frk{sl}_{k}(\R)$$
where $\frk{sl}_{k}(\R)$ is the set of matrices with zero trace. By assumption, there is no $\PSL(\R^k)$-subrepresentation with the same dimension as $E$. This implies that $PH_0 \neq \PSL(\R^k)$. As a result, $PH_0 = \PSO(k)$.

Thus, $H_0 \subset \CO(k)$. It follows that the Lie algebra $\frk{h}$ of $H$ is either $\frk{so}(k)$ or $\frk{so}(k) \oplus \R I_k$. 

Then, $H$ acts by conjugacy on its Lie algebra $\frk{h}$, and it always acts by conjugacy on $\frk{sl}(\R^k)$. This implies that $H$ acts on $\frk{so}(k) = \frk{h} \cap \frk{sl}(\R^k)$. The corresponding connected Lie subgroup $\SO(k)$ acts transitively on the sphere $S^{k-1}$, hence by Lemma \ref{lem_norm_so}, $H$ is contained in the conformal group $\CO(k)$.
\end{proof}

\begin{lem} \label{lem_flat_bundle}
Let $M$ be a smooth manifold of dimension $n$. If there exists a flat principal connection on the principal bundle $FM \to SM$, then $n = 1, 2, 4, 8$.
\end{lem}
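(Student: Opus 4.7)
The plan is to restrict a hypothetical flat $\SO(n-1)$-connection on $FM \to SM$ to a single fiber $S_xM$ of the footpoint projection $SM \to M$, which yields a flat $\SO(n-1)$-bundle over the sphere $S^{n-1}$. For $n \geq 3$ such a bundle must be trivial by simple connectivity, and since the restricted bundle is canonically isomorphic to the orthonormal frame bundle of $S^{n-1}$, the sphere $S^{n-1}$ must be parallelizable. The classical theorem of Bott-Milnor-Kervaire then forces $n - 1 \in \{0, 1, 3, 7\}$, i.e.\ $n \in \{1, 2, 4, 8\}$. The cases $n = 1, 2$ are degenerate/automatic, so the interesting content is $n \in \{4, 8\}$.

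First I would identify the restricted bundle. Using the identification $FM \cong F\mathcal{N}$ from Example \ref{ex:frame_def}, it suffices to identify $\mathcal{N}|_{S_xM}$ with $TS^{n-1}$ as a Riemannian rank-$(n-1)$ vector bundle. This is immediate from the definition: at a point $(x, v) \in S_xM$, the fiber $\mathcal{N}_{(x, v)} = v^\perp \subset T_xM$ coincides with the tangent space $T_v S_xM$, since $S_xM$ is the unit sphere of the Euclidean space $(T_xM, g_x)$. Consequently, once one fixes an isometry $(T_xM, g_x) \cong \R^n$, the restriction $FM|_{S_xM}$ becomes isomorphic to the $\SO(n-1)$-frame bundle of $TS^{n-1}$.

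Second, a flat principal connection on $FM \to SM$ restricts to a flat principal connection on $FM|_{S_xM} \to S_xM$, since the curvature vanishing is a pointwise condition and $S_xM$ is embedded in $SM$. For $n \geq 3$ the base $S_xM \cong S^{n-1}$ is simply connected, hence by the standard classification of flat principal bundles (the holonomy representation factors through $\pi_1$), the restricted bundle is trivial. In particular, the orthonormal frame bundle of $S^{n-1}$ admits a global section, which is equivalent to the parallelizability of $S^{n-1}$.

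The conclusion then invokes the theorem of Bott-Milnor and Kervaire: the sphere $S^{m}$ is parallelizable only when $m \in \{0, 1, 3, 7\}$, so $n \in \{1, 2, 4, 8\}$. There is no real obstacle in the proof; once the restriction to a fiber is identified with $FS^{n-1}$, everything reduces to a classical topological result.
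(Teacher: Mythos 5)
Your proof is correct and follows essentially the same route as the paper: restrict the flat connection to a fiber $S_xM \cong S^{n-1}$, identify the restricted bundle with $FS^{n-1}$ via $\mathcal{N}|_{S_xM} \cong TS^{n-1}$, deduce parallelizability, and invoke Bott--Milnor--Kervaire. The only difference is that you make explicit the simple-connectivity step (flat bundle over $S^{n-1}$, $n \geq 3$, is trivial) that the paper leaves implicit in passing from a flat connection on $FS^{n-1}$ to parallelizability.
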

\begin{proof}
Let $x \in M$ be a base point. Then, the fiber $S_x M$ is identified to the sphere $S^{n-1}$. Under this identification, the restriction $FM \vert_{S_xM} \to S_x M$ is isomorphic to the frame bundle $FS^{n-1} \to S^{n-1}$. 

If there exists a flat connection $\theta$ on $FM \to SM$, then $\theta$ restricts to a flat connection on the frame bundle of the sphere $FS^{n-1}$. Equivalently, the sphere $S^{n-1}$ is parallelizable. This implies that $n = 1, 2, 4$ or $8$.
\end{proof}

\begin{lem} \label{lem_levi}
Let $\frk{h}$ be a Lie algebra and $\frk{a} \subset \frk{h}$ an ideal. If $\frk{s} \coloneq \frk{h} / \frk{a}$ is semisimple, then there exists an embedding of Lie algebras $\frk{s} \hookrightarrow \frk{h}$.
\end{lem}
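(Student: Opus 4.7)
The plan is to deduce this from the classical Levi decomposition theorem, which asserts that any finite-dimensional Lie algebra $\frk{h}$ over a field of characteristic zero splits as a semidirect sum $\frk{h} = \frk{r} \rtimes \frk{l}$, where $\frk{r}$ is the radical (the maximal solvable ideal) and $\frk{l}$ is a semisimple subalgebra (a Levi subalgebra).

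The first step is to observe that the hypothesis that $\frk{s} = \frk{h}/\frk{a}$ is semisimple forces $\frk{a}$ to contain the radical $\frk{r}$ of $\frk{h}$. Indeed, the image of $\frk{r}$ under the canonical surjection $\pi : \frk{h} \twoheadrightarrow \frk{s}$ is a solvable ideal of $\frk{s}$; since $\frk{s}$ is semisimple, this image must be zero, so $\frk{r} \subset \frk{a}$.

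Next, I would pick a Levi subalgebra $\frk{l} \subset \frk{h}$ and consider the restriction $\pi \vert_{\frk{l}} : \frk{l} \to \frk{s}$. This restriction is surjective, since $\frk{h} = \frk{r} + \frk{l}$ and $\pi(\frk{r}) = 0$. Its kernel is $\frk{l} \cap \frk{a}$, which is an ideal of $\frk{l}$ because $\frk{a}$ is an ideal of $\frk{h}$. Since $\frk{l}$ is semisimple, every ideal is a direct factor: we may write $\frk{l} = (\frk{l} \cap \frk{a}) \oplus \frk{l}'$ as a direct sum of Lie algebras, for some ideal $\frk{l}' \subset \frk{l}$. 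The restriction $\pi \vert_{\frk{l}'}$ is then a Lie algebra isomorphism $\frk{l}' \xrightarrow{\sim} \frk{s}$, and composing its inverse with the inclusion $\frk{l}' \hookrightarrow \frk{h}$ yields the desired embedding of $\frk{s}$ into $\frk{h}$.

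The only nontrivial input here is the Levi decomposition theorem, which is standard; the rest is routine semisimple-Lie-algebra bookkeeping, so there is no real obstacle to expect.
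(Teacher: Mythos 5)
Your proof is correct and follows essentially the same route as the paper: both arguments rest on the Levi decomposition plus the fact that in a semisimple Lie algebra every ideal (here the kernel $\frk{l}\cap\frk{a}$) admits a complementary ideal, so that $\frk{s}$ is realized inside a Levi subalgebra of $\frk{h}$. The only cosmetic difference is that the paper phrases this via the decomposition of the Levi factor into simple ideals and the observation that any quotient of it embeds back, whereas you split off the complement of the kernel of the restricted projection directly; the content is the same.
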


\begin{proof}

Denote $\frk{r}$ the radical of $\frk{h}$, which is the maximal solvable ideal of $\frk{h}$. Remark that $\frk{r}$ is necessarily contained in $\frk{a}$. Indeed, the image $\frk{r} / (\frk{r} \cap \frk{a}) \subset \frk{h} / \frk{a} = \frk{s}$ is a solvable ideal of $\frk{s}$, hence is trivial as $\frk{s}$ is semisimple. 

Denote $\frk{l} = \frk{h} / \frk{r}$ the Levi subalgebra of $\frk{h}$. It follows that $\frk{s}$ is a quotient of $\frk{l}$. As $\frk{l}$ is semisimple by construction, one can decompose 
$$\frk{l} = \frk{l}_1 \oplus \cdots \oplus \frk{l}_k$$
where each $\frk{l}_i$ is a simple ideal of $\frk{l}$. Moreover, any ideal of $\frk{l}$ is a sum of such $\frk{l}_i$. This implies that any quotient of $\frk{l}$ is isomorphic to a sum of $\frk{l}_i$, hence embeds into $\frk{l}$. In particular, $\frk{s}$ embeds into $\frk{l}$.

Then, it follows from the celebrated Levi decomposition theorem \cite[Appendix E.1]{fulton2013representation} that $\frk{l}$ embeds into $\frk{h}$, hence the result.
\end{proof}

\subsection{Generic case} \label{sec:kanai_generic}

The goal of this section is to prove the following:
\begin{pps}[Theorem \ref{thm_conj_Kanai}, generic case] \label{pps_kanai_generic}
Let $M$ be a negatively curved, $\delta$-pinched manifold with $\delta > 1/4$, of dimension $n \neq 3, 5$. Assume that the frame flow $\Phi_t^{FM}$ is ergodic with respect to the natural measure on $FM$. 

If the dynamical horizontal distribution $\mathbb{H}$ is $\mathcal{C}^{1/\sqrt{\delta}}$, then:
\begin{itemize}
\item If $n \neq 4, 8$, then $M$ is homothetic to a real hyperbolic manifold.
\item If $n = 4, 8$, then either $M$ is homothetic to a real hyperbolic manifold or $\mathbb{H}$ is integrable.
\end{itemize}
\end{pps}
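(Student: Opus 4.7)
The plan is to apply Lemma \ref{lem_reduction}: it suffices to produce an $H^{E_u}$-invariant conformal structure on the fibre $E_u|_v$. The central object is the curvature morphism $F \colon \Ad(FM) \to \End(E_u)$, which under the $\mathcal{C}^{1/\sqrt{\delta}}$ hypothesis is well-defined and $\mathcal{P}_v$-equivariant by Corollary \ref{cor_eq_curvature}.

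I would first run a dichotomy on $F$: if $F \equiv 0$, the curvature $\omega$ vanishes, so $\mathbb{H}$ is integrable and $FM \to SM$ is flat by Lemma \ref{lem_flat_flows}; Lemma \ref{lem_flat_bundle} then forces $n \in \{1, 2, 4, 8\}$, which under our hypotheses leaves only $n \in \{4, 8\}$ and yields the alternative in the statement. Assume henceforth $F \not\equiv 0$. Ergodicity combined with Theorem \ref{thm_ergod_frame} gives $H^{FM} = \SO(n-1)$, a simple group since $n \neq 3, 5$, whose adjoint action on $\Ad(FM)_v \cong \frk{so}(n-1)$ is irreducible. Proposition \ref{pps_eq_orbits_repr} then makes $F$ injective and produces an irreducible $H^{E_u}$-subrepresentation $\mathcal{I} := F(\Ad(FM)_v) \subset \End(E_u)_v$.

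The heart of the argument is to show that $H^{E_u}$ contains a copy $\SO(h) \subset \GL(E_u|_v)$ for some metric $h$, after which Lemma \ref{lem_so_to_conf} applies. Proposition \ref{pps_eq_orbits_top} applied with the compact group $G_1 = \SO(n-1)$, combined with the injectivity of $F$, produces a continuous homomorphism $\chi \colon H^{E_u} \to \SO(n-1)/Z$ characterised by $h\, F(\xi)\, h^{-1} = F(\chi(h)\, \xi\, \chi(h)^{-1})$; the identity $\chi(\Pi_{E_u}(\gamma)) = [\Pi_{FM}(\gamma)]$ combined with the density of $\Pi_{FM}(\mathcal{P}_v)$ in $\SO(n-1)$ shows that $\chi$ has dense image. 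Promoting this to surjectivity of the differential $\chi_\ast \colon \frk{h}^{E_u} \to \frk{so}(n-1)$, Lemma \ref{lem_levi} applied to the semisimple quotient $\frk{so}(n-1)$ lifts it to an embedding $\frk{so}(n-1) \hookrightarrow \frk{h}^{E_u}$; integrating and using that $\frk{so}(n-1)$ is of compact type gives a compact connected Lie subgroup $K \subset H^{E_u}$ with Lie algebra $\frk{so}(n-1)$, acting faithfully on $E_u|_v$. The classification of faithful real representations of $\SO(n-1)$ (or $\Spin(n-1)$) in dimension $n - 1$ (valid for $n - 1 \neq 1, 2, 4$) then forces $K$ to act through the standard representation, so $K = \SO(h)$ for some metric $h$. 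Lemma \ref{lem_so_to_conf} applied with $k = n-1$, $H = H^{E_u}$ and $E = \mathcal{I}$ of dimension $(n-1)(n-2)/2$, which lies strictly in $(1, (n-1)^2 - 1)$ for $n \geq 4$, then yields $H^{E_u} \subset \CO(h)$, and Lemma \ref{lem_reduction} concludes.

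The main obstacle I anticipate is the passage from density of $\chi(H^{E_u})$ to surjectivity of $\chi_\ast$: a dense continuous homomorphism of Lie groups need not have surjective differential, the classical counter-example being a dense one-parameter subgroup of a torus. I expect this step to require a genuine dynamical input, for instance exploiting that Corollary \ref{cor_ambrose_singer} already forces the classical dynamical holonomy Lie algebra $\frk{d}^{FM}$ of $\Theta$ to equal the whole $\frk{so}(n-1)$ (since $F$ is injective), and propagating this fullness to $\frk{h}^{E_u}$ via the curvature identity along infinitesimal center-stable-unstable loops. An equivalent alternative formulation of the obstacle is to show directly that $\mathcal{I}$ acts irreducibly on $E_u|_v$: Schur's lemma then reduces $\ker \chi$ to the scalars $\mathbb{R}^\ast$, making $PH^{E_u}$ a dense subgroup of the compact $\SO(n-1)/Z$, hence equal to it.
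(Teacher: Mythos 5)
Your overall architecture matches the paper's: the dichotomy on $F$ (zero $\Rightarrow$ $\mathbb{H}$ integrable, hence $n\in\{4,8\}$ by Lemma \ref{lem_flat_bundle}; nonzero $\Rightarrow$ injective with irreducible image $E$ by Proposition \ref{pps_eq_orbits_repr} and Corollary \ref{cor_rank}), then producing $\SO(h_v)\subset H^{E_u}$ via Lemma \ref{lem_levi} and compactness of the integral subgroup covered by $\Spin(n-1)$, and concluding with Lemmas \ref{lem_so_to_conf} and \ref{lem_reduction}. These end steps are essentially the paper's Steps 3--4 (the paper identifies the subgroup with $\SO(h_v)$ by a simple dimension count rather than the classification of $(n-1)$-dimensional faithful representations, but both work).

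However, the step you yourself flag as the main obstacle is a genuine gap, and it is exactly where the paper does something you do not. You only obtain a continuous homomorphism $\chi\colon H^{E_u}\to\PSO(n-1)$ with dense image, and, as you note, density does not give surjectivity of $\chi_*$; neither of your proposed fixes is carried out, and the second is flawed as stated (even if $\ker\chi$ were reduced to scalars, a dense subgroup of the compact group $\PSO(n-1)$ need not be all of it unless it is closed, which is not automatic for the image $PH^{E_u}$). The paper avoids dealing with a merely dense image: it reruns the equivariance argument with $F$ co-restricted to $E$ and with the monoid $\mathcal{P}_{v,0}$ of \emph{contractible} loops, so that by Corollary \ref{cor_restrict_hol} the relevant group $H_2$ (the closure in $\GL(E)$ of the restricted holonomy image acting on $E$) is a \emph{closed, connected} Lie group, while $H_1=\ad(\SO(n-1))$ is unchanged since $\SO(n-1)$ is connected. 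Propositions \ref{pps_eq_orbits_top} and \ref{pps_eq_orbits_alg} give an injective morphism $\chi\colon H_2\hookrightarrow \PSO(n-1)$, and surjectivity is then obtained representation-theoretically rather than by density: transporting the $H_2$-irreducibility of $E$ back through the bijection $F_v$, the space $V_1\cong\frk{so}(n-1)$ is $\chi(H_2)$-irreducible; since $\chi(H_2)$ preserves its own Lie algebra $\chi_*(\frk{h}_2)\subset\frk{so}(n-1)$, irreducibility forces $\chi_*(\frk{h}_2)$ to be $0$ or all of $\frk{so}(n-1)$, the zero case being excluded because a connected group with trivial Lie algebra is trivial and cannot act irreducibly in dimension $>1$; connectedness of $\PSO(n-1)$ then yields $\chi(H_2)=\PSO(n-1)$. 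This exhibits $\frk{so}(n-1)$ as an honest quotient of $\frk{h}^{E_u}$, which is precisely the input your Lemma \ref{lem_levi} step needs. Without this combination (restricted holonomy, passage to the closure so as to have a connected Lie group, and irreducibility transported through $F_v$), your argument does not close.
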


\begin{proof}
Fix a base point $v \in SM$. Denote $H^{FM} = \overline{\Hol}^{\dyn}(FM, \Phi_t^{FM}, v)$ and $H^{E_u} = \overline{\Hol}^{\dyn}(E_u, d\phi_t, v)$ the full dynamical holonomy groups associated to the frame flow $\Phi_t^{FM}$ and the geodesic flow $\phi_t$ respectively. Note that by Theorem \ref{thm_ergod_frame} and as we assumed $\Phi_t^{FM}$ to be ergodic, we have $H^{FM} = \SO(n-1)$ is equal to the structure group of $FM \to SM$.

By Lemma \ref{lem_reduction}, we only have to show that $H^{E_u}$ preserves a conformal structure on the fiber $E_u \vert_v$. To do this, we shall use the lemmas of Section \ref{sec:equivariance} in order to obtain informations on $H^{E_u}$ from the existence of the "equivariant" (in the sense of Corollary \ref{cor_eq_curvature}) morphism $F$. However, the fact that $F$ is not an isomorphism in general prevents us in particular from using Proposition \ref{pps_eq_orbits_alg}. Therefore, we have to restrict and co-restrict $F$ appropriately, which complicates the argument. 

\bigskip

\textbf{Step 1:} Study of the domain and codomain of $F$.

Let $V_1 = \Ad(FM)\vert_v, V_2 = \End(E_u)\vert_v, f = F_v$ the curvature morphism associated to the frame flow at $v$, $\mathcal{P} = \mathcal{P}_v$ the monoid of flow-stable-unstable loops at $v$, $\rho_1 = \Pi_{FM}, \rho_2 = \Pi_{E_u}$ the dynamical holonomies.

The structure group $G = \SO(n-1)$ of the frame bundle $FM \to SM$ is a simple Lie group. In particular, the adjoint representation $\ad: G \to \GL(\frk{g})$ of $G$ is irreducible. The image $G_1 \coloneq \ad(G) \subset \GL(\frk{g})$ is isomorphic to $\PSO(n-1)$. By ergodicity, we have seen that $H^{FM} = \SO(n-1)$. Thus, by definition, the closure $H_1$ of the image of $\rho_1$ in $G_1$ is equal to $G_1 \subset \GL(V_1)$.

The group $H^{E_u}$ acts by conjugacy on $\End(E_u)$; denote $PH^{E_u}$ the image of $H^{E_u}$ in $\GL(\End(E_u)) \eqcolon G_2$; this group can be identified with the image of $H^{E_u}$ by the projection $\GL(E_u) \to \PGL(E_u)$. By definition, $H_2 = PH^{E_u} \subset G_2$.

\smallskip

We now apply Proposition \ref{pps_eq_orbits_repr} in this context. By Corollary \ref{cor_eq_curvature}, $f$ is $(\rho_1, \rho_2)$-equivariant. Moreover, $V_1$ is isomorphic to the adjoint representation $\frk{g}$, hence is $H_1$-irreducible by simplicity of $\SO(n-1)$. Thus, by Proposition \ref{pps_eq_orbits_repr}, $f = F_v$ is either zero or injective. Moreover, $F$ has constant rank by Corollary \ref{cor_rank}.

If $F_v$ is zero, then by definition (and Frobenius's theorem) the dynamical horizontal distribution $\mathbb{H}$ is integrable. This can only happen when $n = 4$ or $n = 8$ by Lemma \ref{lem_flat_bundle}.

If $F$ is injective, then, by Proposition \ref{pps_eq_orbits_repr}, the range $E$ of $F$ is an irreducible $H^{E_u}$-subrepresentation of $\End(E_u)$.

\bigskip

\textbf{Step 2:} Applying the equivariance properties.

In the following, we may assume that $F$ is injective. Denote $\alpha: H^{E_u} \to \GL(E)$ the restriction of the action of $H^{E_u}$ to $E \subset \End(E_u)$.

We now repeat the previous argument, but this time with $F$ co-restricted to its image $E$. Let $V_1 = \Ad(FM)\vert_v, V_2 = E_v, f = F_v$, $\mathcal{P} = \mathcal{P}_{v, 0}$ the monoid of \emph{contractible} flow-stable-unstable loops at $v$, $G_1 = \ad(\SO(n-1)) \subset \GL(V_1), G_2 = \GL(V_2)$, $\rho_1 = \Pi_{FM}, \rho_2 = \alpha \circ \Pi_{E_u}$. Thus, $H_1 = \ad(\SO(n-1)) \subset \GL(V_1)$ and it follows from Corollary \ref{cor_restrict_hol} that $H_2$ is the \emph{identity component} of $\alpha(PH^{E_u})$. In particular, $H_2$ is \emph{connected}.

As $G_1 \cong \PSO(n-1)$ is compact and $f$ is bijective, Propositions \ref{pps_eq_orbits_top}, \ref{pps_eq_orbits_alg} show that there exists a $H_2$-equivariant injective morphism of Lie groups $\chi: H_2 \hookrightarrow H_1$. 

\smallskip

We claim that $\chi$ is an isomorphism. First, recall that $V_1$ is isomorphic to the adjoint representation $\frk{h}_1$ of $H_1$ by construction; in particular $V_1$ is $H_1$-irreducible as seen earlier. It follows from Proposition \ref{pps_eq_orbits_repr} that $V_1$ is $\chi(H_2)$-irreducible. 

However, $\chi(H_2)$ will preserve its own Lie algebra, seen as a Lie subalgebra of $\frk{h}_1$. By irreducibility, the Lie algebra of $\chi(H_2)$ (hence of $H_2$) is either isomorphic to $\frk{h}_1$ or zero. 

Then, $H_2$ is connected. Hence, it it were discrete, it would be trivial; this contradicts the fact that $H_2$ acts irreducibly on $V_1$, which has dimension $> 1$. Thus, the Lie algebra of $\chi(H_2)$ is equal to $\frk{h}_1$. By connectedness of $H_1 \cong \PSO(n-1)$, it follows that $\chi(H_2) = H_1$, as claimed.

\bigskip

\textbf{Step 3:} From a quotient to a subgroup.

We have seen that when $F$ is injective, then the identity component $H_2$ of $\alpha(PH^{E_u})$ is isomorphic to $\PSO(n-1)$. Note that $H_2$ is a quotient of the identity component $H_0^{E_u}$. In particular, as $n \neq 3$, $H_0^{E_u}$ admits a compact semisimple \emph{quotient} with Lie algebra isomorphic to $\frk{so}(n-1)$.

Let $\frk{h}^{E_u}$ be the Lie algebra of $H_0^{E_u}$ (or equivalently of $H^{E_u}$). It follows from Lemma \ref{lem_levi} that $\frk{so}(n-1)$ embeds into $\frk{h}^{E_u}$.

Let $L$ be the connected Lie subgroup of $H^{E_u}$ which corresponds to this subalgebra. Note that $L \subset \GL(E_u)$. Then, $L$ has $\frk{so}(n-1)$ as Lie algebra, hence is covered by $\Spin(n-1)$. In particular, $L$ is compact as $n \neq 3$. 

As a result, there exists an inner product $h_v$ on $E_u \vert_v$ which is preserved by $L$. Equivalently, $L \subset \SO(h_v)$ (which is the associated orthogonal group), as $L$ is connected. However, $L$ has dimension $\dim \frk{so}(n-1) = (n-1)(n-2)/2$ which is the same as $\dim \SO(h_v)$, hence $L = \SO(h_v)$. As a result, $\SO(h_v) = L \subset H_0^{E_u} \subset H^{E_u}$.

\bigskip

\textbf{Step 4:} End of the proof.

Recall that $H^{E_u}$ acts on the image $E$ of $F_v$. We have seen that $E$ is isomorphic through $F_v$ to the adjoint representation $\frk{so}(n-1)$ of $\SO(n-1)$, in particular $E$ has dimension $(n-1)(n-2)/2$. 

We just proved that if $F$ is injective, then $H^{E_u}$ admits as subgroup $\SO(h_v)$ for some Riemannian metric $h_v$ on $E_u \vert_v$. Thus, we may apply Lemma \ref{lem_so_to_conf} to obtain that $H^{E_u}$ is contained in the conformal group $\CO(h_v)$. It only remains to apply Lemma \ref{lem_reduction} to obtain that $M$ is homothetic to a real hyperbolic manifold.

On the other hand, if $F$ is not injective, then we have proved in \textbf{Step 1} that the dynamical horizontal distribution is integrable and $n \in \{4, 8\}$, which ends the proof.
\end{proof}

\subsection{Odd dimensional case} \label{sec:kanai_odd}

The goal of this section is to prove the following:
\begin{pps}[Theorem \ref{thm_conj_Kanai}, odd dimensional case] \label{pps_Kanai_odd}
Let $M$ be a negatively curved, $\delta$-pinched $n$-manifold with $\delta > 1/4$. Assume that $n \geq 5$ is odd.

If the dynamical horizontal distribution $\mathbb{H}$ is $\mathcal{C}^{1/\sqrt{\delta}}$, then $M$ is homothetic to a real hyperbolic manifold.
\end{pps}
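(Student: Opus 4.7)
As in Proposition \ref{pps_kanai_generic}, the strategy is to reduce, via Lemma \ref{lem_reduction}, to producing a conformal class on the fiber $E_u\vert_v$ preserved by the full dynamical holonomy group $H^{E_u} = \overline{\Hol}^{\dyn}(E_u, d\phi_t, v)$. The new ingredient, replacing ergodicity of the frame flow, is Leonard's theorem (Theorem \ref{thm_leo_odd}), which in odd dimension severely restricts the possible frame-flow holonomy groups.

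I would first observe that the dynamical holonomy bundle $Q \subset FM$ of $H^{FM} = \overline{\Hol}^{\dyn}(FM, \Phi_t^{FM}, v)$ is a flow-invariant $H^{FM}$-reduction of the $\SO(n-1)$-principal bundle $FM \to SM$. Restricting $Q$ to a single sphere fiber $S_xM \cong S^{n-1}$ yields a reduction of the frame bundle $FS^{n-1} \to S^{n-1}$ to $H^{FM}$. Theorem \ref{thm_leo_odd} then forces $H^{FM} = \SO(n-1)$ whenever $n$ is odd and $n \neq 7$, while for $n = 7$ the only alternatives to $H^{FM} = \SO(6)$ are $H^{FM}$ (conjugate to) $\U(3)$ or $\SU(3)$.

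The argument then splits according to $H^{FM}$. When $H^{FM} = \SO(n-1)$ with $n \geq 7$, the frame flow is ergodic by Theorem \ref{thm_ergod_frame}, and Proposition \ref{pps_kanai_generic} applies directly: the integrability alternative there is ruled out for odd $n \geq 3$ by Lemma \ref{lem_flat_bundle} (combined with Lemma \ref{lem_flat_flows}), since $S^{n-1}$ is not parallelizable. When $n = 5$ and $H^{FM} = \SO(4)$, simplicity of the structure group fails, but $\mathfrak{so}(4) = \mathfrak{so}(3)_+ \oplus \mathfrak{so}(3)_-$ splits into two irreducible summands; I would rerun the representation-theoretic argument of Proposition \ref{pps_kanai_generic} on each summand, using the equivariance of the curvature morphism $F$ (Corollaries \ref{cor_eq_curvature}, \ref{cor_ambrose_singer}) and constancy of its rank (Corollary \ref{cor_rank}) to embed a copy of $\mathfrak{so}(3)$ or of $\mathfrak{so}(4)$ into the Lie algebra of $H^{E_u}$, and then apply Lemmas \ref{lem_so_to_conf}--\ref{lem_norm_so} to the four-dimensional fiber $E_u\vert_v$ to conclude $H^{E_u} \subset \CO(h_v)$ for some inner product $h_v$. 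Finally, for $n = 7$ with $H^{FM} \subset \U(3)$, the adjoint representation of $\SU(3)$ is $8$-dimensional and irreducible, so the scheme of Proposition \ref{pps_kanai_generic} produces an $\SU(3)$-invariant Hermitian structure on $E_u\vert_v \cong \mathbb{R}^6$, which is a fortiori a conformal structure.

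The principal obstacle is the non-generic case $n = 5$: the splitting of $\mathfrak{so}(4)$ into two simple ideals forces a case analysis on which ideal corresponds to $(\ker F_v)^{\perp}$ via Corollary \ref{cor_ambrose_singer}, and in each case one must verify that the dimension of the image of $F$ satisfies the hypothesis $1 < d < k^2 - 1$ of Lemma \ref{lem_so_to_conf} with $k = 4$. A secondary difficulty lies in the $\U(3), \SU(3)$ subcases of $n = 7$, where one must confirm that the unitary invariant metric produced is preserved by all of $H^{E_u}$ (not merely its identity component) in order to apply Lemma \ref{lem_reduction}.
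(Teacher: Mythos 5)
Your outer strategy matches the paper's (reduce to a conformal class on $E_u\vert_v$ via Lemma \ref{lem_reduction}, restrict holonomy bundles to a sphere fiber and invoke Theorem \ref{thm_leo_odd}, then split on $H^{FM}$), and the branch $H^{FM}=\SO(n-1)$ with $n\geq 7$ is correct: there ergodicity follows from Theorem \ref{thm_ergod_frame} and Proposition \ref{pps_kanai_generic} applies. But you only apply Leonard's theorem to the frame-flow holonomy $H^{FM}$. The key step in the paper, which your plan is missing, is to apply it \emph{also to the holonomy of the unstable bundle itself}: since $\widetilde{E}_u$ restricted to a sphere fiber is topologically $TS^{n-1}$, Lemma \ref{lem_red_K} turns a maximal compact subgroup $K$ of the connected group $H^{\widetilde{E}_u}=H^{E_u}_0$ into a reduction of $F_{\GL}S^{n-1}$, and Theorem \ref{thm_leo_odd} then forces $K=\SO(h_v)$ (or $\SU(3),\U(3)$ when $n=7$). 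This puts a full copy of $\SO(h_v)$ inside $H^{E_u}$ directly, with no dimension count; the curvature morphism is then used only to bound $\dim E$ (for $n=5$: $\dim E\in\{3,6\}$) so that Lemma \ref{lem_so_to_conf} applies.

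Without that input, your $n=5$ and $n=7$ branches do not close. For $n=5$, $\ker F_v$ may be one of the two $\frk{so}(3)$ ideals of $\frk{so}(4)$, and your representation-theoretic transfer then only embeds a $3$-dimensional compact subgroup (locally $\SU(2)$) into $H^{E_u}\subset\GL_4(\R)$; this neither fulfils the hypothesis $\SO(4)\subset H$ of Lemma \ref{lem_so_to_conf} nor those of Lemma \ref{lem_norm_so} — for the latter you would need this copy to act transitively on $S^3$ (false for the block embedding $\SO(3)\subset\SO(4)$) and its Lie algebra to be normalized by all of $H^{E_u}$, neither of which you establish. For $n=7$ with $H^{FM}=\SU(3)$ or $\U(3)$, producing an $\SU(3)$-invariant Hermitian metric is not enough: it is invariant only under a subgroup with Lie algebra $\frk{su}(3)$, not under $H^{E_u}$ or even its identity component, which is exactly what Lemma \ref{lem_reduction} requires. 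The paper closes this by listing the Lie subalgebras of $\End(\R^6)$ containing $\frk{su}(3)$, deducing that $\frk{h}^{E_u}\cap\frk{sl}(\R^6)$ is $\frk{su}(3)$ or $\frk{u}(3)$, and then using transitivity on $S^5$ together with Lemma \ref{lem_norm_so}. Finally, in the $\U(3)$ case the adjoint representation is reducible, so $F_v$ may annihilate the $\frk{su}(3)$ factor; the subcase $\dim E=1$ needs a genuinely different argument (the classical holonomy of $\Theta$ then has dimension $\leq 1$, and restricting the lifted holonomy bundle to a sphere contradicts Theorem \ref{thm_leo_odd}), which your proposal does not address at all.
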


\smallskip

The proof of this result hinges on the topology of principal bundles over even dimensional spheres. In particular, we will be interested in the holonomy bundle associated to the differential $d\phi_t$ of the geodesic flow, which we constructed in Proposition \ref{pps_hol_bdle}.

For technical reasons, we consider the universal cover $\widetilde{M}$ of $M$; note that the unit tangent bundle $S\widetilde{M}$ may also be identified with $\widetilde{SM}$ as the spheres $S^{n-1}$ are simply connected for $n \geq 3$. One may then lift all the bundles and holonomies considered so far. Let us fix a point on $\tilde{x} \in \widetilde{M}$ and consider $S_{\tilde{x}} \widetilde{M} \cong S^{n-1}$. Also denote $\tilde{v} \in S^{n-1} \cong S_{\tilde{x}}M$ the North pole.

Denote $H^{\tilde{E}_u} = \overline{\Hol}^{\dyn}(\widetilde{E}_u, d\phi_t, \tilde{v})$ the full holonomy group at $\tilde{v}$ associated to the differential of the geodesic flow on $S\widetilde{M}$. Also denote $F_{\GL} \widetilde{E_u}$ the frame bundle of $\widetilde{E}_u$; this is a principal $\GL_{n-1}(\R)$-bundle. By Proposition \ref{pps_hol_bdle}, there exists a reduction $Q$ of the structure group $\GL_{n-1}(\R)$ of $F_{\GL} \widetilde{E}_u$ to $H^{\widetilde{E}_u}$.

Recall that $\widetilde{E}_u \vert_{S_{\tilde{x}}\widetilde{M}}$ is topologically isomorphic to the tangent bundle $TS^{n-1}$ over $S^{n-1}$ (see \S\eqref{eq:klingenberg}). This implies that $F_{\GL} \widetilde{E}_u \vert_{S_{\tilde{x}} \widetilde{M}}$ is topologically isomorphic to the frame bundle $F_{\GL} S^{n-1}$ of the sphere $S^{n-1}$. As a result, restricting $Q$ to $S_x M$ defines a reduction of the structure group of the frame bundle $F_{\GL} S^{n-1}$ of the sphere to $H^{\widetilde{E}_u}$. This, in turn, implies the following:

\begin{lem} \label{lem_red_K}
Let $K$ be a maximal compact subgroup of $H^{\widetilde{E}_u}$. There is a reduction of the structure group of $S^{n-1}$ to $K$.
\end{lem}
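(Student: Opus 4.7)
The plan is to combine two ingredients. First, the reduction of $F_{\GL}S^{n-1}$ to $H^{\widetilde{E}_u}$ already at our disposal, obtained by restricting the holonomy bundle $Q$ from Proposition \ref{pps_hol_bdle} to $S_{\tilde x}\widetilde M \cong S^{n-1}$. Second, the classical topological fact that any principal $G$-bundle over a CW-complex admits a further reduction to a maximal compact subgroup $K \subset G$, provided the quotient $G/K$ is contractible. The analytic input for this latter step is the Cartan--Iwasawa--Mal'cev theorem: a real Lie group $G$ with finitely many connected components admits a maximal compact subgroup $K$, unique up to conjugacy, and $G/K$ is diffeomorphic to a Euclidean space, hence contractible. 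Since $H^{\widetilde{E}_u}$ is a closed subgroup of $\GL(\widetilde{E}_u\vert_{\tilde v}) \cong \GL_{n-1}(\R)$ by construction, in particular a real Lie group, this theorem applies.

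Concretely, I would form the associated fiber bundle
\[
\mathcal{E} \;:=\; \bigl(Q\vert_{S_{\tilde x}\widetilde M}\bigr) \times_{H^{\widetilde{E}_u}} \bigl(H^{\widetilde{E}_u}/K\bigr) \;\longrightarrow\; S^{n-1},
\]
which has contractible fibers. Since $S^{n-1}$ is a finite CW-complex, obstruction theory (or a direct cell-by-cell induction) yields a global continuous section $\sigma$ of $\mathcal{E}$. The preimage of $\sigma$ under the quotient map $Q\vert_{S_{\tilde x}\widetilde M} \to \mathcal{E}$ is then a principal $K$-subbundle of $Q\vert_{S_{\tilde x}\widetilde M}$; composing with the inclusion $Q\vert_{S_{\tilde x}\widetilde M} \subset F_{\GL}S^{n-1}$ provides the required reduction of the structure group of the frame bundle of $S^{n-1}$ to $K$.

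The only potentially delicate point, and hence the main obstacle I anticipate, is verifying the finiteness of connected components of $H^{\widetilde{E}_u}$, which is the hypothesis needed for Cartan--Iwasawa--Mal'cev to guarantee contractibility of $H^{\widetilde{E}_u}/K$. For the closure of a path-connected monoid of dynamical holonomies inside $\GL_{n-1}(\R)$, this is controlled by Corollary \ref{cor_restrict_hol}, which identifies the identity component of the full holonomy group and shows that the group of components is a discrete quotient of a fundamental group. Once finiteness (or, more modestly, the contractibility of $H^{\widetilde{E}_u}/K$) is granted, the rest of the argument is a purely formal topological statement and requires no further input from the dynamics.
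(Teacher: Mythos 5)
Your argument is correct and is essentially the paper's own: the paper simply cites \cite[Vol.1, Ch.1, Example 5.6]{kobayashi1996foundations} for exactly the argument you spell out (the associated bundle with contractible fiber $H^{\widetilde{E}_u}/K$ admits a section over the CW-complex $S^{n-1}$, whose preimage is the desired $K$-subbundle of $Q\vert_{S_{\tilde x}\widetilde M}\subset F_{\GL}S^{n-1}$). The one point to tighten is your "delicate point": finiteness of the component group would not follow from its being a discrete quotient of $\pi_1(SM)$ (which is infinite here); instead, Corollary \ref{cor_restrict_hol} gives that $H^{\widetilde{E}_u}$ is \emph{connected} --- this is precisely why the lemma is formulated on the universal cover --- so the Cartan--Iwasawa--Mal'cev input (contractibility of $H^{\widetilde{E}_u}/K$ for the closed, hence Lie, subgroup $H^{\widetilde{E}_u}\subset\GL_{n-1}(\R)$) applies verbatim.
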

\begin{proof}
A general argument (see \cite[Vol.1, Ch.1, Example 5.6]{kobayashi1996foundations}) shows that as $H^{\widetilde{E}_u}$ is connected (by Corollary \ref{cor_restrict_hol}, this is why we consider the universal cover), the holonomy bundle $Q\vert_{S_{\tilde{x}}\widetilde{M}} \subset F_{\GL} S^{n-1}$ has a reduction to any maximal compact subgroup $K$ of $H^{\widetilde{E}_u}$. Restricting this reduced bundle to a fiber $S_{\tilde{x}} \widetilde{M}$ as above, we obtain the desired reduction of the structure group of $S^{n-1}$ to $K$.
\end{proof}

Doing the previous construction on the base manifold $SM$ at the projection $(x, v)$, the Corollary \ref{cor_restrict_hol} lets us then identify $H^{\tilde{E}_u} = H^{E_u}_0$ where $H^{E_u}_0$ is the connected component of the identity of the holonomy group $H^{E_u} = \overline{\Hol}^{\dyn}(E_u, d\phi_t, v) \subset \GL(E_u \vert_v) \cong \GL(\R^{n-1})$. As a result, we may consider that $K \subset H^{E_u}_0 = H^{\tilde{E}_u} \subset H^{E_u} \subset \GL(\R^{n-1})$ in the following.

\smallskip

We can now give the proof of Proposition \ref{pps_Kanai_odd}.
\begin{proof}
We first assume that $n \neq 7$. By Lemma \ref{lem_red_K}, there exists a reduction of the structure group of the sphere $S^{n-1}$ to $K$. By Theorem \ref{thm_leo_odd}, as $n \neq 7$, this implies that $K$ is the orthogonal group $\SO(h_v)$ associated to some Riemannian metric $h_v$ on $E_u \vert_v$. 

Recall that as $n \neq 7$, $\Phi_t$ is ergodic by \cite{brin1982ergodic}. We argue as in \textbf{Step 1} of the proof of Proposition \ref{pps_kanai_generic} and obtain that $H^{E_u}$ acts (by conjugacy) on a subspace $E \subset \End(E_u \vert_v)$, obtained as the range of $F_v$ where $F$ is the curvature morphism associated to the frame flow. Thus $E$ is nonzero (as $n \neq 4, 8$) and $\ker F_v$ is a subrepresentation of the adjoint representation $\frk{so}(n-1)$ of $\SO(n-1)$.

We can use this to compute the dimension of $E$:
\begin{itemize}
\item If $n \geq 9$, then $\frk{so}(n-1)$ is irreducible; this implies that $\ker F_v$ is zero. Thus, $\dim E = \dim \frk{so}(n-1) = (n-1)(n-2)/2$.
\item If $n = 5$, then $\frk{so}(4)$ splits as $\frk{so}(3) \oplus \frk{so}(3)$, hence $\dim \ker F_v \in \{0, 3\}$ and $\dim E \in \{3, 6\}$.
\end{itemize} 
Thus, by Lemma \ref{lem_so_to_conf}, $H^{E_u} \subset \CO(h_v)$. It only remains to apply Lemma \ref{lem_reduction} to obtain that $M$ is homothetic to a real hyperbolic manifold.

\bigskip 

We now consider the case $n = 7$. This time, the frame flow $\Phi_t^{FM}$ is not known to be ergodic in general. However, the associated full holonomy group $H^{FM} = \overline{\Hol}^{\dyn}(FM, \Phi_t^{FM}, v)$ is a closed subgroup of $\SO(6)$ and there is a reduction of the structure group of $S^6$ to $H^{FM}$, which may be obtained by restricting the associated holonomy bundle (see Proposition \ref{pps_hol_bdle}) to a fiber $S_x M \cong S^6$. By Theorem \ref{thm_leo_odd}, this implies that $H^{FM} = \SO(6), \U(3)$ or $\SU(3)$. If $H^{FM} = \SO(6)$, then $\Phi_t^{FM}$ is ergodic by Theorem \ref{thm_ergod_frame}, and we may conclude as above.

\smallskip

If $H^{FM} = \SU(3)$, we have to go back to the proof of Proposition \ref{pps_kanai_generic}. The only place where we used the ergodicity of $\Phi_t^{FM}$ was in order to show that $H_1 = \ad(\SO(n-1))$. This time, we will have $H_1 = \ad(\SU(3))$. However, $\ad(\SU(3))$ is still a simple Lie group and its adjoint representation is irreducible. Thus, we may repeat \textbf{Step 1} and \textbf{Step 2} of the proof of Proposition \ref{pps_kanai_generic} word for word, replacing $\PSO(n-1)$ with $\PSU(3)$. This shows that the holonomy Lie algebra $\frk{h}^{E_u}$ admits a quotient isomorphic to $\frk{su}(3)$. 

Then, $H^{E_u}$ also admits a subgroup conjugate to $\SU(3)$ by Lemma \ref{lem_red_K}. We may then list the subalgebras of $\End(\R^6)$ which contain $\frk{su}(3)$: they are $\{0\}, \frk{su}(3), \frk{u}(3), \frk{sp}(6, \R), \frk{sl}(\C^3), \frk{so}(6), \frk{sl}(\R^6)$ as well as each sum of $\R I_6$ and one of the previous algebras. This implies that $\frk{h}^{E_u}$ is isomorphic to $\frk{su}(3), \frk{u}(3), \frk{su}(3) \oplus \R I_6, \frk{u}(3) \oplus \R I_6$. 

Then, $H^{E_u}$ acts on its own Lie algebra and on $\frk{sl}(\R^6)$ by conjugacy, hence on their intersection. As a result, $H^{E_u}$ preserves either $\frk{su}(3)$ or $\frk{u}(3)$. The correpsonding linear Lie groups $\SU(3), \U(3)$ act transitively on the sphere $S^5 \subset \R^6$. Thus, in both cases, we may conclude that $H^{E_u}$ is contained in the conformal group of some Riemannian metric on $E_u \vert_v$ by Lemma \ref{lem_so_to_conf}. It only remains to apply Lemma \ref{lem_reduction} to obtain that $M$ is homothetic to a real hyperbolic manifold.

\smallskip 

If $H^{FM} = \U(3)$, then by the same argument as above, $H^{E_u}$ preserves a nonzero subspace $E \subset \End(E_u)\vert_v$ isomorphic to an ideal of $\frk{u}(3) \cong \R \oplus \frk{su}(3)$. If $\dim E \neq 1$, then we may reduce to the case of $\SU(3)$ above by considering a subspace of $E$ corresponding to $\frk{su}(3)$.

Else, $E$ is 1-dimensional. This implies that the curvature $\omega$ associated to the dynamical connection form $\Theta^{FM}$ is valued in a real line bundle; further, by Corollary \ref{cor_ambrose_singer}, the holonomy Lie algebra of the dynamical connection form has dimension $\leq 1$.

By lifting $\Theta$ to the universal cover $F \widetilde{M}$ and Corollary \ref{cor_restrict_hol}, we obtain that the bundle $F\widetilde{M} \to S \widetilde{M}$ has a reduction to a subgroup of dimension $\leq 1$. Restricting this bundle to a single sphere as before, we obtain that the frame bundle over $S^6$ has a reduction to a subgroup of dimension $\leq 1$. By Theorem \ref{thm_leo_odd}, this is impossible. This concludes the proof.
\end{proof}

\section{Proofs of the two generalizations} \label{sec:generalizations}

In this section, we generalize the proofs of the previous section in three contexts, proving Theorems \ref{thm_conj_Kanai_simple} and \ref{thm_conj_Kanai_odd}. The goal is again to prove the existence of a conformal structure on $E_u$ invariant under the geodesic flow (and then use Lemma \ref{lem_reduction}). 

This is done by first generalizing the tools in the proof of Proposition \ref{pps_kanai_generic}; this yields Proposition \ref{pps_synth_eq}. We obtain as a direct corollary Theorem \ref{thm_conj_Kanai_simple}, when the structure group is simple. In the odd dimensional case, the proof of Theorem \ref{thm_conj_Kanai_odd} is however more involved than Proposition \ref{pps_Kanai_odd}.

The last paragraph is dedicated to the study of extensions of general Anosov contact flows and is largely independent from the others.

\subsection{The equivariant correspondence} \label{sec:eq_correspondence}

Let $(P, \Phi^P_t)$ be an isometric extension of an arbitrary Anosov flow $\varphi_t$ on a closed manifold $N$, and assume that $\varphi_t$ is $\beta$-bunched for some $\beta \leq 1$. Assume that the dynamical connection form associated to $\Phi^P_t$ is $\mathcal{C}^{1, \beta}$. 

Fix $v \in N$ a base point. We denote by $\frk{d}^P$ the Lie algebra of the \emph{classical} holonomy group $D^P \coloneq \Hol^{\dyn}(P, \Phi_t^P, v)$ (Definition \ref{def:classical}), $\frk{h}^P$ the Lie algebra of the \emph{full} holonomy group $H^P \coloneq \overline{\Hol}^{\dyn}(P, \Phi_t^P, v)$ (Definition \ref{def:full}). By Corollary \ref{cor_inclusions}, we have a dense inclusion $D^P \subset H^P$.

We also denote by $\frk{h}^{E_u}$ the Lie algebra of the full holonomy group $H^{E_u} = \overline{\Hol}^{\dyn}(E_u, d\phi_t, v)$.

By Corollary \ref{cor_ambrose_singer}, one can identify $\frk{d}^P = (\ker F_v)^\perp$ where $F$ is the curvature morphism associated to $(P, \Phi_t^P)$ defined in \eqref{eq:identifications}. We also denote $E = F(\frk{d}^P) \subset \End(E_u)$.

We have:
\begin{lem} \label{lem_dom_codom}
The adjoint action of $\frk{h}^P$ preserves $\frk{d}^P$, the action by conjugacy of $\frk{h}^{E_u}$ preserves $E$, and $F_v$ restricts to an isomorphism between $\frk{d}^{P}$ and $E$.
\end{lem}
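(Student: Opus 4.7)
The plan is to extract all three conclusions directly from the equivariance relation
$$\Pi_{\End(E_u)}(\gamma) \circ F_v = F_v \circ \Pi_{\Ad(P)}(\gamma)^{T}, \qquad \gamma \in \mathcal{P}_v,$$
given by Proposition \ref{pps_equivariance_master}. Two preliminary observations will be used throughout: first, since $G$ is compact, the bi-invariant metric on $\frk{g} \cong \Ad(P)_v$ is $\Ad$-invariant, so $\Pi_{\Ad(P)}(\gamma) = \Ad(\Pi_P(\gamma))$ acts by orthogonal transformations and $\Pi_{\Ad(P)}(\gamma)^{T} = \Pi_{\Ad(P)}(\gamma)^{-1}$; second, $\mathcal{P}_v$ is stable under path reversal because stable, unstable, and central holonomies are all invertible, and $\Pi_P(\mathcal{P}_v)$ is dense in $H^P$ by Corollary \ref{cor_inclusions}.

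First I would show that the conjugation action of $H^P$ on $\Ad(P)_v$ preserves $\ker F_v$. Indeed, for $\xi \in \ker F_v$ and $\gamma \in \mathcal{P}_v$, the equivariance relation yields $F_v(\Pi_{\Ad(P)}(\gamma)^{T} \xi) = \Pi_{\End(E_u)}(\gamma) F_v(\xi) = 0$, so $\Pi_{\Ad(P)}(\gamma)^{T} = \Pi_{\Ad(P)}(\gamma)^{-1}$ preserves $\ker F_v$. Path reversal (or, equivalently, a dimension count) then upgrades this to invariance under $\Pi_{\Ad(P)}(\gamma)$ itself, and taking closures gives invariance under $H^P$. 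Because $H^P$ preserves the bi-invariant metric, it also preserves the orthogonal complement $\frk{d}^P = (\ker F_v)^\perp$; differentiating the conjugation action at the identity yields $[\frk{h}^P, \frk{d}^P] \subseteq \frk{d}^P$, which is the first assertion.

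For the second assertion, note first that $\Ad(P)_v = \frk{d}^P \oplus \ker F_v$ gives $E = F_v(\frk{d}^P) = \mathrm{Ran}(F_v)$. The same equivariance relation then shows, for $\eta = F_v(\xi) \in E$, that $\Pi_{\End(E_u)}(\gamma)\eta = F_v(\Pi_{\Ad(P)}(\gamma)^{T} \xi) \in \mathrm{Ran}(F_v) = E$, so $E$ is preserved by $\Pi_{\End(E_u)}(\mathcal{P}_v)$ and hence, by density and continuity, by $H^{E_u}$. Differentiation again yields the desired Lie algebra statement. The third claim is then automatic: $F_v|_{\frk{d}^P}$ is surjective onto $E$ by definition of $E$, and injective because $\frk{d}^P \cap \ker F_v = \{0\}$, so it is a linear isomorphism.

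There is no serious obstacle here — everything is a direct consequence of the equivariance established earlier. The only point worth highlighting is the role of the transpose in the equivariance formula: without the compactness of $G$, one could not identify $\Pi_{\Ad(P)}(\gamma)^{T}$ with an element of the holonomy group, and the step transferring invariance of $\ker F_v$ under the transpose to invariance under the group action itself would fail.
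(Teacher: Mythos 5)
Your proposal is correct, but for the first assertion it takes a different route from the paper. The paper gets invariance of $\frk{d}^P$ under $\ad(\frk{h}^P)$ purely group-theoretically: $D^P$ is normal in its closure $H^P$, so $\frk{d}^P$ is an ideal of $\frk{h}^P$ -- no curvature is involved at that step. You instead derive it from the equivariance relation of Proposition \ref{pps_equivariance_master}, the orthogonality of $\Ad(\Pi_P(\gamma))$ with respect to the bi-invariant metric (so that the transpose is the inverse), closure under path reversal, density of $\Pi_P(\mathcal{P}_v)$ in $H^P$, and the identification $\frk{d}^P = (\ker F_v)^\perp$ supplied by Corollary \ref{cor_ambrose_singer}. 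This is legitimate: the steps you use (invertibility of holonomies, hence stability of $\mathcal{P}_v$ under reversal; the dimension count to pass from invariance under $\Pi^{T}=\Pi^{-1}$ to invariance under $\Pi$; differentiation of the $\Ad(H^P)$-action along one-parameter subgroups of the closed subgroup $H^P\subset G$) are all valid, and in effect you are re-deriving the compatibility statement already recorded at the end of Corollary \ref{cor_ambrose_singer}. What each approach buys: yours is self-contained from the equivariance of $F$ and avoids relying on the normality of $D^P$ in $H^P$, at the cost of needing the curvature identification even for the first claim; the paper's argument for that claim is shorter and independent of $F$. For the stability of $E$ under $\frk{h}^{E_u}$ and the isomorphism $F_v:\frk{d}^P\to E$, your direct computation from the equivariance formula plus density and continuity is essentially the paper's argument (it routes the former through Proposition \ref{pps_eq_orbits_repr} and the latter through Corollary \ref{cor_ambrose_singer}), so there is nothing to flag there.
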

\begin{proof}
The group $D^P$ is normal in its closure $H^P$, hence its Lie algebra $\frk{d}^P$ is an ideal of $\frk{h}^P$; equivalently, $\frk{d}^P$ is a subrepresentation of the adjoint representation $\frk{h}^P$.

It follows from Corollary \ref{cor_ambrose_singer} that $F_v: \frk{d}^P \to E$ is an isomorphism.

Finally, the fact that $E$ is stable under $\frk{h}^{E_u}$ follows from Proposition \ref{pps_eq_orbits_repr} applied in the following context. Take $V_1 = \Ad(P)_v, V_2 = \End(E_u)\vert_v, f = F_v, G_1 = G, G_2 = \GL(\End(E_u))$ and $\rho_1 = \Pi_P, \rho_2 = \Pi_{E_u}$. Then the action of $H^{E_u}$ on $\End(E_u)$ induces the action of the corresponding $H_2$ on $\End(E_u)$, which preserves $E$ by Proposition \ref{pps_eq_orbits_repr}.
\end{proof}

If $\frk{h}$ is a Lie algebra acting on a vector space $V$ through a representation $\rho: \frk{h} \to \End(V)$ and $B \subset \End(V)$, we denote $C_{\frk{h}}(B)$ the \emph{centralizer} of $B$ in $\frk{h}$, defined by 
$$C_{\frk{h}}(B) = \{X \in \frk{h}, \forall Y \in B, [\rho(X), Y] = 0\}.$$
Here, we consider more specifically $\frk{h}^P$ acting on itself via the adjoint representation and $\frk{h}^{E_u}$ acting on $\End(E_u)$, where the action is induced by the associated Lie group action of $H^{E_u}$ on $E_u$.

The goal of this paragraph is to prove the following.

\begin{pps}
\label{pps_synth_eq}
Let $(P, \Phi_t^P)$ be an isometric extension of a $\beta$ bunched contact Anosov flow $\varphi_t$, $\beta \leq 1$. Assume that the dynamical connection form associated to $\Phi^P_t$ is $\mathcal{C}^{1, \beta}$.

There is an isomorphism of Lie algebras
$$\frk{h}^P / C_{\frk{h}^P}(\frk{d}^P) \cong \frk{h}^{E_u} / C_{\frk{h}^{E_u}}(E) \eqcolon \frk{l}.$$
 
The Lie algebra $\frk{l}$ is compact and semi-simple. Moreover, $E$ is isomorphic as $\frk{l}$-representation to $\ad \frk{l} \oplus \R^d$, where $d$ is the dimension of the center of $\frk{d}^P$.
\end{pps}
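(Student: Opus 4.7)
My plan is to reduce the statement to the algebraic structure of $\frk{d}^P$ as an ideal of the compact Lie algebra $\frk{h}^P$, and then transport this structure to $E$ via the isomorphism $F_v\vert_{\frk{d}^P}$ given by Lemma \ref{lem_dom_codom}. For the Lie algebra isomorphism, I will consider the representations $\pi_1 : H^P \to \GL(\frk{d}^P)$ (by the adjoint action) and $\pi_2 : H^{E_u} \to \GL(E)$ (by conjugation on $\End(E_u\vert_v)$), whose kernels are exactly $C_{H^P}(\frk{d}^P)$ and $C_{H^{E_u}}(E)$. Since $G$ is compact, the bi-invariant inner product on $\frk{g}$ yields $\Ad(\Pi_P(\gamma))^T = \Ad(\Pi_P(\gamma))^{-1}$, so Proposition \ref{pps_equivariance_master} rewrites, on the dense subset $\mathcal{P}_v$, as $\pi_2(\Pi_{E_u}(\gamma)) = \Ad(F_v) \cdot \pi_1(\Pi_P(\gamma))^{-1}$. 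Combining compactness of $H^P \subseteq G$ with the continuity of $\pi_1, \pi_2$, I will extend this intertwining to the closures, producing a Lie group isomorphism $\pi_2(H^{E_u}) \cong \pi_1(H^P)$, and hence the desired isomorphism of Lie algebras $\frk{l} \cong \frk{h}^P / C_{\frk{h}^P}(\frk{d}^P)$.

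I then identify $\frk{l}$ with the derived subalgebra $[\frk{d}^P, \frk{d}^P]$. Fixing a bi-invariant inner product on the compact $\frk{h}^P$, the orthogonal complement $\frk{d}^{P, \perp}$ of the ideal $\frk{d}^P$ is itself an ideal, so $[\frk{d}^P, \frk{d}^{P, \perp}] \subset \frk{d}^P \cap \frk{d}^{P, \perp} = 0$, which gives $\frk{d}^{P, \perp} \subset C_{\frk{h}^P}(\frk{d}^P)$ and $\frk{h}^P = \frk{d}^P + C_{\frk{h}^P}(\frk{d}^P)$. The second isomorphism theorem will then yield
$$\frk{l} \cong \frk{d}^P \big/ \bigl( \frk{d}^P \cap C_{\frk{h}^P}(\frk{d}^P) \bigr) = \frk{d}^P / Z(\frk{d}^P) \cong [\frk{d}^P, \frk{d}^P],$$
using the reductive decomposition $\frk{d}^P = Z(\frk{d}^P) \oplus [\frk{d}^P, \frk{d}^P]$ valid since $\frk{d}^P$ is compact. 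As the derived subalgebra of a compact Lie algebra, $[\frk{d}^P, \frk{d}^P]$ is itself compact and semisimple.

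For the representation-theoretic statement, I will note that the adjoint action of $\frk{h}^P$ on $\frk{d}^P$ preserves the characteristic ideals $Z(\frk{d}^P)$ and $[\frk{d}^P, \frk{d}^P]$. On $Z(\frk{d}^P)$ the induced $\frk{l}$-action should be trivial: for $X \in \frk{h}^P$ and $Z \in Z(\frk{d}^P)$, the bracket $[X, Z]$ lies in $\frk{d}^P$ but is orthogonal to $\frk{d}^P$ (since $\langle [X, Z], W \rangle = \langle X, [Z, W] \rangle = 0$ for every $W \in \frk{d}^P$), hence vanishes. On $[\frk{d}^P, \frk{d}^P]$ it coincides with the adjoint action of $\frk{l} \cong [\frk{d}^P, \frk{d}^P]$ on itself. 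Transporting through $F_v\vert_{\frk{d}^P}$, one obtains $E \cong \R^d \oplus \ad \frk{l}$ with $d = \dim Z(\frk{d}^P)$.

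The main technical hurdle will be the closure argument in the first step: extending the $F_v$-equivariance from $\mathcal{P}_v$ to all of $H^P, H^{E_u}$ relies crucially on the compactness of $G$ (via Proposition \ref{pps_eq_orbits_top}) in order to guarantee that the Lie group images of $\pi_1$ and $\pi_2$ match under $\Ad F_v$ after passing to closures. The remaining steps are then routine consequences of the structure theory of compact Lie algebras.
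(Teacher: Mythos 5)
Your proposal is correct and takes essentially the same route as the paper: restrict and co-restrict $F_v$ to an isomorphism $\frk{d}^P \to E$, use compactness on the $P$-side together with the holonomy equivariance of $F$ to identify the two quotient algebras via conjugation by $F_v$ (the paper's Steps 1--3, carried out there through Propositions \ref{pps_eq_orbits_top} and \ref{pps_eq_orbits_alg} plus a separate compactness argument for the image group on the $E_u$-side), and then the structure theory of compact Lie algebras for semisimplicity and for $E \cong \ad \frk{l} \oplus \R^d$ (the paper's Steps 4--5). Your only deviations are presentational: you argue with the second isomorphism theorem and the orthogonal complement of the ideal $\frk{d}^P$ where the paper writes out the explicit decomposition into simple ideals plus center, and you pass to closures directly instead of invoking the general equivariance propositions twice.
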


\begin{proof}
\textbf{Step 1:} Restriction and co-restriction of $F$.

Denote $\alpha: \End(\frk{h}^P) \to \End(\frk{d}^P)$ and $\eta: \End(\End(E_u)) \to \End(E)$ the restriction maps. 

We consider $V_1 = \frk{d}^P, V_2 = E, f = F_v, G_1 = \SO(V_1), G_2 = \GL(V_2)$ and $\rho_1 = \Pi_P \vert_{V_1}, \rho_2 = \Pi_{E_u} \vert_{V_2}$ the restricted dynamical holonomies. We also consider $\mathcal{P} = \mathcal{P}_{v, 0}$ the monoid of \emph{contractible} flow-stable-unstable paths. 

The closures $H_1, H_2$ of the images of $\rho_1, \rho_2$ are computed as follows. First, the closure of the group generated by the image of $\Pi_P$ in $\frk{h}_P$ is by definition $H^P$, and the subgroup generated by dynamical holonomies is the identity component $H_0^P$ by Corollary \ref{cor_restrict_hol}. Thus, $H_1 = \alpha(H_0^P)$. Similarly, $H_2 = \eta(H_0^{E_u})$.

We can then rewrite 
$$C_{\frk{h}^P}(\frk{d}^P) = \ker \alpha \vert_{\frk{h}^P}, \quad C_{\frk{h}^{E_u}}(E) = \ker \eta \vert_{\frk{h}^{E_u}}.$$
and as a result, denoting $\frk{h}_i$ the Lie algebra of $H_i, i = 1, 2$, we obtain
$$\frk{h}^P / C_{\frk{h}^P}(\frk{d}^P) \cong \frk{h}_1, \quad \frk{h}^{E_u} / C_{\frk{h}^{E_u}}(E) \cong \frk{h}_2.$$

\bigskip

\textbf{Step 2:} The group $H_2$ is compact.

We claim that $H_2$ is compact. By Remark \ref{rmq_compact_H}, as $f$ is bijective and $G_1$ is compact, the $H_2$ orbits of $V_2$ are compact. Choose a basis $(e_i)$ of $V_2$, then if $h_j$ is a sequence of elements of $H_2$, one can extract a subsequence such that each $h_{j_k}(e_i), h_{j_k}^{-1}(e_i)$ is convergent. It follows that the subsequences $h_{j_k}, h_{j_k}^{-1}$ are convergent. If we denote $h, h'$ the respective limits of these subsequences, then $hh' = 1$, hence $h$ is invertible, and $h \in H_2$ as $H_2$ is closed. This proves the claim.

\bigskip

\textbf{Step 3:} The groups $H_1$ and $H_2$ are isomorphic, and the representations $\frk{d}^P, E$ are isomorphic.

As $f$ is bijective and $H_1$ is compact, we can apply Propositions \ref{pps_eq_orbits_top} and \ref{pps_eq_orbits_alg} in this context. We obtain an injective morphism of Lie groups $\chi: H_2 \hookrightarrow H_1$, which is the conjugacy by $f$. However, notice that as $H_2$ is compact too one may apply the same results with the roles of $H_1$ and $H_2$ exchanged, which shows that $\chi$ is an isomorphism.

The same results show that $f$ is an isomorphism of representations between $V_1 = \frk{d}^P$ and $V_2 = E$. More precisely, if we identify $H_1$ and $H_2$ through $\chi$, then $f$ becomes genuinely $H_1$ equivariant.

In particular, the Lie algebras $\frk{h}_1, \frk{h}_2$ are isomorphic (to the Lie algebra denoted $\frk{l}$ in the statement).

\bigskip

\textbf{Step 4:} The groups $H_1, H_2$ are semi-simple.

It suffices to check that the Lie algebra $\frk{h}_1$ is semisimple. As $H^P$ is a compact Lie group, one can decompose 
$$\frk{h}^P = \frk{s}_1 \oplus \cdots \oplus \frk{s}_k \oplus \frk{z}$$
where the $\frk{s}_i$ are simple commuting ideals and $\frk{z}$ is the center of $\frk{h}^P$.

As $\frk{d}^P$ is a subrepresentation of $\frk{h}^P$, it is an ideal and thus writes (up to some relabelling of the simple blocks)
$$\frk{d}^P = \frk{s}_1 \oplus \cdots \oplus \frk{s}_j \oplus \frk{z}'$$
where $\frk{z}'$ is the center of $\frk{d}^P$. In particular, one readily computes $C_{\frk{h}^P}(\frk{d}^P)$ and then $\frk{h}_1$ to be 
\begin{align*}
C_{\frk{h}^P}(\frk{d}^P) &= \frk{s}_{j + 1} \oplus \cdots \oplus \frk{s}_k \oplus \frk{z}, \\
\frk{h}_1 &= \frk{s}_1 \oplus \cdots \oplus \frk{s}_j.
\end{align*}
As a result, $\frk{h}_1$ is semi-simple.

\bigskip

\textbf{Step 5:} Information obtained on the representation $E$.

We have seen at the end of \textbf{Step 3} that $f$ is a morphism of representations between $V_1 = \frk{d}^P$ and $V_2 = E$. We have just computed that
$$\frk{d}^P \cong \frk{h}_1 \oplus \frk{z}'$$
where $\frk{z}'$ is the center of $\frk{d}^P$. Thus, as $\frk{h}_1$-representations, we have 
$$E \cong \frk{h}_1 \oplus \R^d$$
where $\R$ is the trivial representation and $d = \dim \frk{z}'$.
\end{proof}

\subsection{Isometric extensions of geodesic flows - Simple ergodic case} \label{sec:frame_generic}

In this section, we shall prove the following result, which refines Theorem \ref{thm_conj_Kanai_simple}:
\begin{thm}
\label{thm_kanai_gen}
Let $(M, g)$ be a $\delta$-pinched negatively curved manifold of dimension $n$ with $\delta > 1/4$ and let $(P, \Phi^P_t)$ be a $G$-principal isometric extension of the geodesic flow $\phi_t$ on $SM$ where $G$ is compact.

Assume that $G$ is a simple Lie group of dimension at least $(n-1)(n-2)/2$ and $\Phi_t^P$ is ergodic. If the dynamical horizontal distribution $\mathbb{H}$ is $\mathcal{C}^{1/\sqrt{\delta}}$, then, there are two possibilities:
\begin{itemize}
\item The dynamical horizontal distribution is integrable. Equivalently, $\Phi_t^P$ is a flat flow (see Section \ref{sec:flat}).
\item $M$ is homothetic to a real hyperbolic manifold, and $G$ has Lie algebra $\frk{so}(n-1)$. Further, there exists an isomorphism of Lie algebra bundles between the adjoint bundles $\psi: \Ad(P) \to \Ad(FM)$ conjugating the induced flows.
\end{itemize}
\end{thm}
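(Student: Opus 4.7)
The plan is to mimic the strategy of Proposition \ref{pps_kanai_generic}, leveraging simplicity of $G$ directly in place of the specific simplicity of $\SO(n-1)$. The main inputs are Proposition \ref{pps_synth_eq}, Lemma \ref{lem_levi}, Lemma \ref{lem_so_to_conf}, and Lemma \ref{lem_reduction}.

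First, I would use ergodicity together with Theorem \ref{thm_ergod_frame} to get $\overline{\Hol}^{\dyn}(P, \Phi_t^P, v) = G$, so that $\frk{h}^P = \frk{g}$ is simple. The classical holonomy Lie algebra $\frk{d}^P$ is an ideal of $\frk{h}^P$ (since $D^P$ is normal in $H^P$), so simplicity forces $\frk{d}^P = 0$ or $\frk{d}^P = \frk{g}$. In the first case, Corollary \ref{cor_ambrose_singer} gives $F_v = 0$, hence $F \equiv 0$ by Corollary \ref{cor_rank}, so $\omega = 0$ and $\mathbb{H}$ is integrable by Lemma \ref{lem_flat_flows}; this is the first alternative. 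We may therefore assume $\frk{d}^P = \frk{g}$.

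Next, I apply Proposition \ref{pps_synth_eq}. Simplicity of $\frk{g}$ yields $Z(\frk{g}) = 0$, so $C_{\frk{h}^P}(\frk{d}^P) = 0$ and $d = \dim Z(\frk{d}^P) = 0$; consequently $\frk{l} \cong \frk{g}$ and $E \cong \ad \frk{g}$ as $\frk{l}$-representations. Since $\frk{h}^{E_u}/C_{\frk{h}^{E_u}}(E) \cong \frk{g}$ is simple, Lemma \ref{lem_levi} provides an embedding $\frk{g} \hookrightarrow \frk{h}^{E_u}$, and I let $L \subset H^{E_u} \subset \GL(E_u \vert_v)$ denote the corresponding connected Lie subgroup. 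As $\frk{g}$ is the Lie algebra of the compact simple group $G$, $L$ is compact and preserves an inner product $h_v$ on $E_u \vert_v$, so $L \subset \SO(h_v)$. The dimension bound $\dim \frk{g} \geq (n-1)(n-2)/2 = \dim \SO(h_v)$ combined with connectedness forces $L = \SO(h_v)$ and $\frk{g} \cong \frk{so}(n-1)$. Then $\SO(h_v) \subset H^{E_u}$ acts by conjugacy on $E \subset \End(E_u \vert_v)$ with $1 < \dim E = (n-1)(n-2)/2 < (n-1)^2 - 1$, and Lemma \ref{lem_so_to_conf} gives $H^{E_u} \subset \CO(h_v)$. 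Lemma \ref{lem_reduction} now yields that $M$ is homothetic to a real hyperbolic manifold, with $G$ of Lie algebra $\frk{so}(n-1)$.

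It remains to construct the Lie algebra bundle isomorphism $\psi : \Ad(P) \to \Ad(FM)$. Once $M$ is hyperbolic, the computation of Section \ref{sec:hyp_frame} identifies $F^{FM}$ with an injective bundle map $\Ad(FM) \hookrightarrow \End(E_u)$ whose fiberwise image is the skew-symmetric subbundle $\frk{so}(h) \subset \End(E_u)$. Our $F^P$ is likewise an injective flow-equivariant bundle map whose image is, fiberwise, an irreducible $\SO(h_v)$-subrepresentation of $\End(E_u \vert_v)$ of dimension $(n-1)(n-2)/2$; since $\frk{so}(h_v)$ is the unique such subrepresentation in $\End(E_u \vert_v)$ for $n \geq 4$, the image of $F^P$ coincides fiberwise with $\frk{so}(h)$. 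Setting $\psi := (F^{FM})^{-1} \circ F^P$ then gives a flow-equivariant bundle isomorphism; to upgrade it to a Lie algebra isomorphism I would use that on the simple Lie algebra $\frk{so}(n-1)$ the bracket is the unique (up to scalar) equivariant antisymmetric bilinear self-map, so that a fiberwise normalization of $\psi$ matches the two bracket structures. The main obstacle I foresee is precisely this last normalization step: the scalar factor making $\psi$ fiberwise bracket-preserving must be shown continuous and flow-invariant, so that a single global rescaling of $\psi$ turns it into a genuine Lie algebra bundle isomorphism; this should follow from continuity of $\psi$ and ergodicity of $\Phi_t^P$, but requires careful verification.
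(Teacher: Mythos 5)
Your proposal is correct and follows essentially the same route as the paper's proof: ergodicity plus simplicity of $\frk{g}$ to force $F^P$ to be either zero (integrable case) or fiberwise injective, Proposition \ref{pps_synth_eq} together with Lemma \ref{lem_levi} to embed a compact copy of $\frk{g}$ into $H^{E_u}$, the dimension count to identify it with $\SO(h_v)$ and conclude $\frk{g}\cong\frk{so}(n-1)$, then Lemmas \ref{lem_so_to_conf} and \ref{lem_reduction}, and finally $\psi=(F^{FM})^{-1}\circ F^P$ via Proposition \ref{pps_equivariance_master}. The only divergence is at the last step, where you supply details the paper asserts without comment (that the fiberwise images of $F^P$ and $F^{FM}$ coincide, and the Schur-type scalar normalization needed to make the equivariant map $\psi$ genuinely bracket-preserving, using continuity and ergodicity to make the scalar constant); this is a refinement rather than a gap.
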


\begin{rmq}
We believe that the isomorphism between adjoint bundles can be upgraded to some extent to a bundle isomorphism between $P$ and $FM$; certainly, such an isomorphism can only exist at the level of the universal covers. However, we were unable to prove this.
\end{rmq}

\begin{proof}
By Theorem \ref{thm_ergod_frame}, the full holonomy group $H^P$ of $\Phi_t^P$ is exactly $G$. Thus, the Lie algebra $\frk{h}^P$ of $P$ is equal to $\frk{g}$, hence is simple. As a result, $\frk{h}^P$ admits exactly two quotients, which are the trivial Lie algebra and $\frk{h}^P$ itself.

Let $\frk{d}^P$ be the Lie algebra of the \emph{classical} dynamical holonomy group $D^P$ associated to $\Phi_t^P$. We have by Proposition \ref{pps_synth_eq}
$$\frk{l} \coloneq \frk{h}^P / C_{\frk{h}^P}(\frk{d}^P) \cong \frk{h}^{E_u} / C_{\frk{h}^{E_u}}(E)$$
where $E$ is the range of the dynamical curvature map $F^P_v$ at $v$ and $\frk{h}^{E_u}$ is the Lie algebra of the full holonomy group $H^{E_u}$ associated to $d\phi_t$ on $E_u$.Thus, $\frk{l}$ is either trivial or isomorphic to $\frk{g}$. Recall that by Corollary \ref{cor_ambrose_singer}, $\frk{d}^P$ is complement to the kernel of the dynamical curvature $F_v^P$. Hence, either $F_v^P = 0$ or $F_v^P$ is injective. By Corollary \ref{cor_rank}, this implies that either $F^P = 0$ or $F^P$ is fiberwise injective.

If $F^P = 0$, then the dynamical connection associated to $\Phi_t^P$ is flat by definition. Equivalently, the dynamical horizontal distribution is integrable.

Else, $F^P$ is injective. Moreover, Lemma \ref{lem_levi} provides an embedding $\frk{g} = \frk{l} \hookrightarrow \frk{h}^{E_u}$. Let $L$ the connected subgroup of $H^{E_u}$ with Lie algebra $\frk{l}$. The universal cover of $L$ is compact by the theory of simple compact Lie algebras; hence $L$ is compact. As a result, $L \subset \End(E_u)$ preserves some inner product $h_v$ on $E_u \vert_v$, which provides an embedding $L \hookrightarrow \SO(h_v)$. However, $L$ has dimension at least $(n-1)(n-2)/2 = \dim \SO(h_v)$, hence $L = \SO(h_v)$ by connectedness. In particular, $\frk{g} \cong \frk{so}(n-1)$. 

Thus, we have shown that $\SO(n-1) \subset H^{E_u}$. Recall that by Lemma \ref{lem_dom_codom}, $H^{E_u}$ acts on the subspace $E \subset \End(E_u \vert_v)$ which has dimension $(n-1)(n-2)/2 = \dim \frk{so}(n-1) = \dim \frk{g}$ by injectivity of $F_v ^P$. The Lemma \ref{lem_so_to_conf} then shows that $H^{E_u}$ is contained in the conformal group $\CO(h_v)$. Thus, by Lemma \ref{lem_reduction}, $M$ is homothetic to a real hyperbolic manifold.

\smallskip

It remains to construct the conjugacy between adjoint bundles. By Proposition \ref{pps_equivariance_master}, the curvature $F^P$ conjugates $\Ad(\Phi_t^P)^*$ to $\Ad(d\phi_t)$. The same holds for $FM$ and the frame flow, thus $\psi \coloneq (F^{FM})^{-1} \circ F^P$ gives us our conjugacy. Moreover, $\psi$ is an isomorphism of $\frk{so}(n-1)$-representations by composition, hence it is a Lie algebra isomorphism as the representation here is the adjoint representation of $\frk{so}(n-1)$.
\end{proof}

\begin{rmq}
The dimension bound $\dim G \geq (n-1)(n-2)/2$ can actually be relaxed (with the same proof) to $\dim G > m(n-1)$, where $m(n)$ denotes the maximal dimension of a proper Lie subalgebra of $\frk{so}(n)$. This value can be determined directly, or by consulting the classification of maximal subgroups of $\frk{so}(n)$ (see \cite{dynkin1957maximal}): for $n \neq 4$, $m(n) = (n-1)(n-2)/2 = \dim \SO(n-1)$, while for $n = 4$, $m(4) = 4 = \dim \U(2)$.
\end{rmq}

\subsection{Isometric extensions of geodesic flows - Odd dimensional case} \label{sec:frame_odd}

We now give the proof of Theorem \ref{thm_conj_Kanai_odd}.

\begin{proof}
We use the same notations as in Section \ref{sec:eq_correspondence}.

By Lemma \ref{lem_red_K}, the maximal compact subgroup $K$ of the full holonomy group $H^{E_u}$ defines a $K$-structure on the sphere $S^{n-1}$. By Theorem \ref{thm_leo_odd}, this implies that $K$ is either $\SO(n-1)$ or maybe $\SU(3)$ or $\U(3)$ if $n = 7$. More precisely, there exists a metric $h_v$ on $E_u \vert_v$ such that $K$ is given by the standard embedding into $\SO(n-1) \cong \SO(h_v)$.

The strategy here is to use Proposition \ref{pps_synth_eq} in order to compute explicitly $\frk{h}^{E_u}$, knowing that it already contains the Lie algebra of $K$ and admits as quotient $\frk{l}$. We distinguish two cases.

\smallskip
\textbf{The Lie algebra $\frk{l}$ is trivial.} 

In this case, Proposition \ref{pps_synth_eq} yields that $\frk{d}^P$ is central in $\frk{h}^P$ and that $\frk{h}^{E_u}$ commutes with every element of $E$.

From this, we deduce that the Lie algebra of $K$ acts trivially on $E \subset \GL(E_u \vert_v) \cong \GL(\R^{n-1})$. The spaces of fixed points of $K$ are computed as follows:
\begin{itemize}
\item If $K = \SO(n-1)$ and $n \geq 4$, then the fixed points of $K$ are the homotheties $\R I_{n-1}$;
\item If $n = 3$, then the fixed points of $K$ are the homotheties $\R I_2$ and the skew-symmetric matrices $\frk{so}(2)$;
\item If $n = 7$ and $K = \SU(3)$ or $\U(3)$, then the fixed points of $K$ are the homotheties $\R I_6$ and the line $\R J$ where $J$ is the standard complex structure on $\R^6$.
\end{itemize}
This gives us every possibility for $E$. Using the fact that $H^{E_u}$ preserves $E$, we deduce that:
\begin{itemize}
\item If $E \subset \R I_{n-1}$, then $\Phi_t^P$ is a homothetical curvature flow by definition (Section \ref{sec:exp_flow}). 
\item If $n = 3$ and $E$ has a nontrivial $\frk{so}(2)$ component, then one checks directly that $H^{E_u}$ acts on $\frk{so}(2)$ (as it commutes with the homotheties $\R I_2$) and thus is a subset of $\CO(n-1)$ by Lemma \ref{lem_norm_so} for $G = \SO(n-1)$. This implies that $M$ is homothetic to a real hyperbolic manifold by Lemma \ref{lem_reduction}.
\item If $n = 7$ and $E$ has a nonzero $\R J$ component, then $H^{E_u}$ is contained in $\GL_3(\C)$. Arguing as in Lemma \ref{lem_reduction}, we may use parallel transport to obtain an holonomy invariant complex structure on $E_u$. However, such a structure does not exist as explained in the proof of \cite[Corollary 2.12]{hamenstadt1995invariant}. 
\end{itemize}
\smallskip

\textbf{The Lie algebra $\frk{l}$ is nontrivial.}

Assume that $\frk{l}$ is nontrivial. We first compute the projection $PH_0^{E_u} \subset \PGL(E_u)\vert_v$.
\begin{itemize}
\item Assume that $K = \SO(n-1)$. Consider $PH^{E_u}_0 \subset \PGL(E_u)\vert_v \cong \PGL(\R^{n-1})$; notice that $\PSO(n-1) \subset PH^{E_u}_0 \subset \PSL(\R^{n-1})$. By maximality of $\PSO(n-1)$ as connected subgroup of $\PSL(\R^{n-1})$, we obtain that $PH^{E_u}_0 = \PSO(n-1)$ or $\PGL(\R^{n-1})$. However, $\frk{l}$ is a compact, semisimple, nonzero quotient of the Lie algebra of $PH^{E_u}_0$ by construction, and $\PGL(\R^{n-1})$ is simple, noncompact hence does not have any compact nontrivial quotient. As a result, $PH^{E_u}_0 = \PSO(n-1)$.
\item Assume that $n = 7$ and $\SU(3) \subset K$. Similarly, one can list every closed, connected subgroup of $\PSL(\R^6)$ which contains $\PSU(3)$ \\ (equivalently, one can list the corresponding Lie algebras); these are $\PSU(3), P\U(3)$, $\PSO(6), P\Sp(6, \R), \PSL(\C^3), \PSL(\R^6)$. Here, $\Sp(6, \R)$ denotes the stabilizer of the standard nondegenerate 2-form on $\R^6$. Again, $\frk{l}$ is a compact, semisimple, nonzero quotient of the Lie algebra of $PH^{E_u}_0$, and $P\Sp(6, \R), \PSL(\C^3), \PSL(\R^6)$ do not have such a quotient (they do not have any simple compact ideal). As a result, $PH^{E_u}_0 = \PSU(3), P\U(3)$ or $\PSO(6)$.
\end{itemize}

We thus obtain that the identity component $H^{E_u}_0$ of $H^{E_u}$ is contained in $\CO(n-1)$. This implies that the Lie algebra $\frk{h}^{E_u}$ is contained in $\frk{so}(n-1) \oplus \R I_{n-1}$. Moreover, we know that $\frk{h}^{E_u}$ contains the Lie algebra of $K$, thus:
\begin{itemize}
\item If $K = \SO(n-1)$, then $\frk{h}^{E_u}$ is either $\frk{so}(n-1)$ or $\frk{so}(n-1) \oplus \R I_{n-1}$;
\item If $K = \SU(3)$ or $\U(3)$, then $\frk{h}^{E_u}$ is either $\frk{su}(3), \frk{u}(3), \frk{so}(6)$ or\\ $\frk{su}(3) \oplus \R I_6, \frk{u}(3) \oplus \R I_6, \frk{so}(6) \oplus \R I_6$.
\end{itemize}
Then, $H^{E_u}$ acts on its own Lie algebra $\frk{h}^{E_u}$ and also acts on $\frk{sl}(\R^{n-1})$, hence acts on $\frk{h}^{E_u} \cap \frk{sl}(\R^k)$ which is equal to $\frk{so}(n-1), \frk{su}(3)$ or $\frk{u}(3)$. As the groups $\SO(n-1), \SU(3)$ and $\U(3)$ act transitively on the corresponding unit sphere, we deduce from Lemma \ref{lem_norm_so} that $H^{E_u}$ is contained in $\CO(n-1)$.

We may then conclude from Lemma \ref{lem_reduction} that $M$ is homothetic to a real hyperbolic manifold.
\end{proof}

\begin{rmq}
It follows from Example \ref{ex:hyp_geod_group} that for real hyperbolic manifolds, the maximal compact subgroup $K$ is equal to $\SO(n-1)$ and the Lie algebra $\frk{l}$ is either $\frk{so}(n-1), n \neq 3$ or $0$ for $n = 3$. In particular, the case $n = 7$ is not special in our problem even though $S^6$ admits a reduction of its structure group to $\SU(3)$.
\end{rmq}

\subsection{Isometric extensions of contact Anosov flows}

In this section, we generalize Theorem \ref{thm_conj_Kanai_simple} to principal, isometric extensions $(P, \Phi_t)$ of contact Anosov flows $\varphi_t$. This is done by connecting the dynamical bundles associated to the extension $\Phi_t$ and the base flow $\varphi_t$, as described in the following:
\begin{pps}
\label{pps_high_regularity}
Let $\varphi_t$ be an Anosov, $\beta$ bunched flow on a closed $2d-1$ manifold $N$ with $\beta \leq 1$. Let $(P, \Phi_t^P)$ be an isometric principal extension of $\varphi_t$ with compact structure group. Assume that the dynamical horizontal space $\mathbb{H}$ associated to $\Phi^P_t$ is $\mathcal{C}^{k+1}$ for some $k \geq 2$. 

Then, if at some base point $v \in N$ the union of the ranges of elements in the range of $F_v$ in $\End(E_u \vert_v)$ generates $E_u \vert_v$, then the stable and unstable bundles associated to $\varphi_t$ are $\mathcal{C}^k$.
\end{pps}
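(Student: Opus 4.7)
The plan is to use the $\mathcal{C}^{k+1}$ regularity of the dynamical horizontal distribution $\mathbb{H}$ to promote the curvature $\omega$ on $N$ to a $\mathcal{C}^k$ tensor, and then to exploit the range hypothesis at $v$ to cut out $E_u$ (and, symmetrically, $E_s$) as a $\mathcal{C}^k$ subbundle of $\ker \lambda$ defined by algebraic conditions involving $\omega$ and $d\lambda$. First I would observe that, since $\mathbb{H} = \ker \Theta$ is $\mathcal{C}^{k+1}$, the dynamical connection form $\Theta$ of Lemma \ref{lem_dyn_connection} is $\mathcal{C}^{k+1}$, so its curvature $\Omega = d\Theta + [\Theta, \Theta]$ and the induced $\Ad(P)$-valued $2$-form $\omega$ on $N$ are $\mathcal{C}^k$. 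Using the smooth nondegenerate symplectic form $d\lambda$ on $\ker \lambda$ and a smooth bi-invariant metric $g_{\Ad}$ on $\Ad(P)$, available because $G$ is compact, define the $\mathcal{C}^k$ bundle map
\[
\hat{F} : \Ad(P) \to \End(\ker \lambda), \qquad d\lambda(\hat{F}(\xi) Y, Z) = g_{\Ad}(\omega(Y, Z), \xi).
\]
The antisymmetry of $\omega$ forces $\hat{F}(\xi)$ to be $d\lambda$-self-adjoint, and the vanishing of $\omega$ on $E_u \times E_u$ and $E_s \times E_s$ (Corollary \ref{cor_restrict}), together with the Lagrangianity of these bundles for $d\lambda$, forces $\hat{F}(\xi)$ to preserve the splitting $\ker \lambda = E_s \oplus E_u$, restricting to $F(\xi)$ on $E_u$ and to its $d\lambda$-adjoint on $E_s$.

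Next I would propagate the range hypothesis at $v$ to every $x \in N$. Since $F$ is $\mathcal{C}^k$ with $k \geq 2$, Proposition \ref{pps_equivariance_master} applies and gives equivariance of $F$ along the path monoid $\mathcal{P}_v$; together with the existence of stable-center-unstable paths between any two points (Lemma \ref{lem_exist_csu}) and the constant-rank corollary (Corollary \ref{cor_rank}), this yields $\sum_\xi \mathrm{ran}(F(\xi)|_{E_u}) = E_u|_x$ for every $x \in N$. By $d\lambda$-duality between $F$ and $\hat{F}|_{E_s}$, the equivalent statement $\bigcap_\xi \ker \hat{F}(\xi)|_{E_s} = 0$ also holds pointwise on $N$.

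The core of the argument is then to realize $E_u$ (and, symmetrically, $E_s$) as a $\mathcal{C}^k$ subbundle of $\ker \lambda$ using only the $\mathcal{C}^k$ data $(\hat{F}, d\lambda, g_{\Ad})$. My plan is to select smooth local sections $\xi_1, \ldots, \xi_m$ of $\Ad(P)$ and $Y_1, \ldots, Y_m$ of $\ker \lambda$ such that, at $v$, the combined action of the $\hat{F}(\xi_i)$ paired against $d\lambda$ yields an invertible linear system determining the $E_u$-component of each $Y_i$; under the range hypothesis this is possible at $v$, and by continuity the system remains invertible in a neighborhood, producing $\mathcal{C}^k$ projections onto $E_u$ and $E_s$ there. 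Covering $N$ by such neighborhoods and gluing via the equivariance of the construction under dynamical holonomies yields $E_u, E_s$ as global $\mathcal{C}^k$ subbundles; alternatively, $E_s$ follows from the same argument applied to the time-reversed flow $\varphi_{-t}$, which has the same $\mathbb{H}$ and exchanges the two bundles.

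The hard part will be the algebraic step constructing the $\mathcal{C}^k$ projection onto $E_u$: the operators $\hat{F}(\xi)$ preserve both Lagrangian summands and act on them symmetrically via the $d\lambda$-adjunction, so distinguishing $E_u$ from $E_s$ requires an essential use of the asymmetric range/kernel hypothesis. Concretely, this separation should be achieved by analyzing a $2m \times 2m$ block-matrix Schur complement built from the $\hat{F}(\xi_i)$ and the $d\lambda$-pairing, whose invertibility at $v$ is a consequence of the generation property; making this construction globally $\mathcal{C}^k$ on $N$ is the core technical challenge of the proof.
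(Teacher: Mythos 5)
Your preliminary reductions are fine and agree with the paper's setup: since $\mathbb{H} = \ker\Theta$ is $\mathcal{C}^{k+1}$, the curvature $\omega$ and the morphism $F$ are $\mathcal{C}^k$, and the generation property propagates from $v$ to every base point via the equivariance of Proposition \ref{pps_equivariance_master} and Lemma \ref{lem_exist_csu}. The genuine gap is that your central step is never carried out. You assert that smooth sections $\xi_i$, $Y_i$ can be chosen so that an "invertible linear system" built from the $\hat{F}(\xi_i)$ and $d\lambda$ "determines the $E_u$-component of each $Y_i$", but the $E_u$-component is defined by the very splitting whose regularity is at stake, and you give no recipe computing it from the $\mathcal{C}^k$ data $(\hat{F}, d\lambda, g_{\Ad})$ alone. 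Moreover, the difficulty you flag is fatal to this strategy as formulated: each $\hat{F}(\xi)$ preserves both Lagrangian summands and its two restrictions are exchanged by $d\lambda$-adjunction, so any pointwise algebraic expression in $\hat{F}$ and $d\lambda$ (sums of ranges, intersections of kernels, block matrices, Schur complements) treats $E_s$ and $E_u$ symmetrically and cannot single out $E_u$; for instance the span of the ranges of the $\hat{F}(\xi)$ is $E_u \oplus \sum_\xi \mathrm{ran}\big(F(\xi)^{*}\big)$, not $E_u$. Some dynamically anchored, regularity-controlled input is required, and deferring it as "the core technical challenge" leaves the proof incomplete.

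The paper's proof supplies exactly that input and never extends $F$ to $\End(\ker\lambda)$: it keeps $F$ valued in $\End(E_u)$, so that vectors in the image of $F_w(\eta)$ lie in $E_u\vert_w$ by construction, and it draws the regularity from the auxiliary bundle $\Ad(P)$ rather than from $E_u$ itself. Concretely, one chooses $x_1,\dots,x_{n-1} \in \Ad(P)_v$ and columns $y_i$ of $F_v(x_i)$ forming a basis of $E_u\vert_v$ (possible by the generation hypothesis), transports the $x_i$ to nearby points by the dynamical parallel transport $\Pi_P$ on $P$, which is $\mathcal{C}^k$ because $\Theta$ is $\mathcal{C}^{k+1}$, and applies $F$ there; by the equivariance of Corollary \ref{cor_eq_curvature} and continuity of $F$ and of the determinant, the resulting vectors stay a basis of the unstable space, and they depend on the base point with the regularity of $F \circ \Pi_P$, i.e. $\mathcal{C}^k$. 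This yields a $\mathcal{C}^k$ local frame of $E_u$ near $v$, globalized by parallel transport between fibers, with $E_s$ handled symmetrically. The missing idea in your proposal is precisely this combination of the $\mathcal{C}^k$ transport on $\Ad(P)$ with the $\End(E_u)$-valuedness of $F$, which guarantees membership in $E_u$ for free; your passage to $\hat{F}$ discards exactly that feature.
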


\begin{proof}
Consider a piecewise smooth path $\gamma$ in $\mathcal{P}_v$. By the equivariance of $F$, we have
$$F_v \circ \Pi_{E_u}(\gamma) = \Pi_{P}(\gamma) \circ F_v.$$
By assumption, one may find $x_1, \cdots, x_{n-1} \in \Ad(P)_v, y_1, \cdots, y_{n-1} \in E_u \vert_v$ such that $y_i$ is a column of $F_v(x_i)$, say the $j_i$-th one, and $(y_i)$ form a basis of $E_u \vert_v$. It follows from the continuity of $F$ and of the determinant that for $t$ small enough, $y_i(t) = [F_v \circ \Pi_{P}(\gamma)(t)x_i]_{j_i}$ form a basis of $E_u \vert_{\varphi_t(v)}$.

By Lemma \ref{lem_exist_csu}, such (piecewise smooth) paths $\gamma$ let us build a local trivialization of $E_u$ around $v$ which has the same $\mathcal{C}^k$ regularity as $F \circ \Pi_P$. This proves that the unstable bundle is $\mathcal{C}^k$ in a neighborhood of $v$. This result then extends to the whole space by using the existence of parallel transport maps on $P$ between any pair of fibers (see Lemma \ref{lem_exist_csu}) which are $\mathcal{C}^k$ by assumption.
\end{proof}

As mentionned in the introduction, rigidity results for the stable and unstable bundles of geodesic flows were obtained in \cite[Theorem 1]{benoist1990flots} in high regularity. We now recall several notions from this paper. 

When classifying contact Anosov flows with smooth (un)stable bundles, one considers first the geodesic flows on locally symmetric manifolds; then one can also look at finite covers and/or quotients of these. This step is only necessary for locally symmetric 2-manifolds, as otherwise covers of geodesic flows on unit tangent bundles $SM$ are themselves geodesic flows (see Remark \ref{rmq_univ_cover}).

However, one may also consider smooth reparametrizations of the flow, which are described by a cohomology class of $H^1(N, \R)$ lying in a convex open neighborhood of the zero class. More precisely, given a closed 1-form $\alpha$ on our manifold $N$ such that $1 + \alpha(X) > 0$, where $X$ is the generator of $\varphi_t$, we may consider the generator $\hat{X}$ of the reparametrized flow $\hat{\varphi}_t$ given by the formula
$$\hat{X} = \frac{1}{1 + \alpha(X)} X.$$
As explained in \cite{benoist1990flots}, the flow $\hat{\varphi}_t$ only depends, up to smooth re-parametrization, on the cohomology class of $\alpha$ in $H^1(N, \R)$. Moreover, if $\lambda$ is the contact form associated to $\varphi_t$, then $\hat{\varphi}_t$ preserves the contact form $\lambda + \alpha$, and the associated stable and unstable bundles are smooth; they are given by the explicit formulas 
$$\hat{E}_{s/u} = \{ Y - \alpha(Y) X, Y \in E_{s/u} \}$$
where $E_{s/u}$ are the stable and unstable distributions of $\varphi_t$.

The result of \cite{benoist1990flots} states that we obtain this way every example of contact Anosov flow with smooth horospherical bundles, up to smooth conjugacy. Moreover, one can get rid of the conjugacy using \cite{besson1995entropies}. 

\begin{csq}
\label{cor_high_regularity}
Under the assumptions of Proposition \ref{pps_high_regularity}, if $d \geq 3$ and $k \geq 2(2d^2-d+1)$, then $(N, \varphi_t)$ can be obtained by deformation of a geodesic flow $\phi_t$ on a locally symmetric manifold $M$ by some cohomology class.
\end{csq}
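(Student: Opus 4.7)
The plan is to chain three results together. The first step is to invoke Proposition \ref{pps_high_regularity} directly: its conclusion is that the stable and unstable distributions $E_s, E_u$ of the base flow $\varphi_t$ on the $(2d-1)$-manifold $N$ are $\mathcal{C}^k$. Nothing else from the excerpt is needed at this stage; the rank hypothesis on $F_v$ and the $\mathcal{C}^{k+1}$ regularity of $\mathbb{H}$ are exactly what Proposition \ref{pps_high_regularity} converts into $\mathcal{C}^k$ regularity of the horospherical foliations.

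The second step is to apply the smooth rigidity theorem of Benoist-Foulon-Labourie \cite{benoist1990flots}. Their main result says that a contact Anosov flow on a $(2d-1)$-manifold whose stable and unstable bundles are sufficiently regular (with the explicit threshold depending on the dimension) is smoothly conjugate, after a reparametrization by some closed 1-form, to the geodesic flow of a locally symmetric negatively curved manifold. The bound $k \geq 2(2d^2-d+1)$ in the statement of the corollary is precisely what their theorem requires in dimension $2d-1$. Applying it produces a closed, locally symmetric negatively curved manifold $M$, a cohomology class $[\alpha] \in H^1(SM,\R)$ in the convex open neighborhood of $0$ described earlier in the excerpt, and a smooth conjugacy between $\varphi_t$ and the reparametrized geodesic flow $\hat\phi_t$ on $SM$.

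The last step is to remove this conjugacy, as indicated in the paragraph preceding the corollary statement. Besson-Courtois-Gallot \cite{besson1995entropies}, when combined with the Benoist-Foulon-Labourie construction, upgrades the smooth conjugacy between contact Anosov flows into an honest identification: the conjugacy is induced by an isometry on the base, so $(N, \varphi_t)$ is itself obtained, up to isomorphism, as a reparametrization of a geodesic flow on a locally symmetric manifold by some cohomology class. This is exactly the content of the corollary.

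I do not expect any substantial obstacle here, since the argument is an assembly of already existing results, the regularity bound $k \geq 2(2d^2-d+1)$ being engineered to match the input required by \cite{benoist1990flots}. The only verification to carry out is that the conventions on the dimension of $N$, the contact form preserved by $\varphi_t$, and the action of the cohomology class $[\alpha]$ are consistent between Proposition \ref{pps_high_regularity}, the hypotheses on $\varphi_t$, and the Benoist-Foulon-Labourie theorem; all of these are built into the setup of the paper so this is straightforward.
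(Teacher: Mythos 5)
Your proposal follows essentially the same route as the paper: Proposition \ref{pps_high_regularity} gives $\mathcal{C}^k$ regularity of $E_s, E_u$, then \cite{benoist1990flots} classifies the flow up to smooth conjugacy and reparametrization, and \cite{besson1995entropies} removes the conjugacy. The one point you omit is where the hypothesis $d \geq 3$ is actually used: the Benoist--Foulon--Labourie classification is only up to \emph{finite covers and quotients} of geodesic flows on locally symmetric spaces, and it is precisely because $d \neq 2$ (so the unit tangent bundle $S\widetilde{M}$ is the universal cover of $SM$, cf.\ Remark \ref{rmq_univ_cover}) that such covers are themselves geodesic flows on locally symmetric manifolds; without this remark your step 2, as stated, silently drops the cover/quotient caveat that the paper explicitly addresses.
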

\begin{proof}
This follows from \cite[Theorem 1]{benoist1990flots} and \cite{besson1995entropies}. Note that as $d \neq 2$, there is no need to consider covers or quotients of $(SM, \phi_t)$, as these are directly geodesic flows on locally symmetric manifolds (see Remark \ref{rmq_univ_cover}).
\end{proof}

We now state our result.

\begin{thm} \label{thm_general_simple}
Let $\varphi_t$ be an Anosov, $\beta$ bunched flow on a closed $2d-1$ manifold $N$ with $\beta \leq 1$. Let $(P, \Phi_t^P)$ be an isometric principal extension of $\varphi_t$ with compact structure group $G$. 

Assume that $\dim G \geq (d-1)(d-2)/2$, $\Phi_t^P$ is ergodic and that the dynamical horizontal space $\mathbb{H}$ associated to $\Phi^P_t$ is $\mathcal{C}^{k+1}$ for some $k \geq 2(2d^2-d+1)$. 

There are two possibilities:
\begin{itemize}
   \item[(1)] The dynamical curvature $F$ vanishes. In this case, $\Phi_t^P$ is a flat flow.
   \item[(2)] We can obtain $(N, \varphi_t)$ by deformation of a geodesic flow $\phi_t$ on a real hyperbolic manifold $M$ by some cohomology class. The corresponding reparametrization of $\Phi_t^P$ defines an isometric extension of $\phi_t$ on $SM$, which is then described by Theorem \ref{thm_conj_Kanai_odd}.
\end{itemize}
\end{thm}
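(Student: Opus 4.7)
My plan is to follow a three-stage strategy: first use the high regularity of $\mathbb{H}$ to upgrade the regularity of the stable and unstable bundles of $\varphi_t$, then invoke the Benoist--Foulon--Labourie classification through Corollary \ref{cor_high_regularity} to identify $(N,\varphi_t)$ up to cohomological reparametrization with a geodesic flow on a locally symmetric manifold, and finally apply the earlier rigidity results for isometric extensions of geodesic flows to pin down hyperbolic geometry and obtain the dichotomy.

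First, by Corollary \ref{cor_rank} the curvature morphism $F$ has constant rank on $N$, so there is a clean dichotomy. If $F\equiv 0$, then $\mathbb{H}$ is integrable and Lemma \ref{lem_flat_flows} immediately yields case (1). In the contrary case $F_v\neq 0$ at every $v$, and I need to verify the generation hypothesis of Proposition \ref{pps_high_regularity}: that the linear span $R_v\subset E_u|_v$ of the ranges of $F_v(\xi)$, $\xi\in\Ad(P)_v$, equals all of $E_u|_v$. By the equivariance of $F$ under dynamical holonomies (Proposition \ref{pps_equivariance_master}), the collection $\{R_w\}_{w\in N}$ defines a continuous, flow-invariant subbundle of $E_u$. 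To conclude $R=E_u$, I would combine the ergodicity assumption, which by Theorem \ref{thm_ergod_frame} forces $H^P=G$, with the dimension bound $\dim G\ge (d-1)(d-2)/2$ (and the simplicity of $G$ implicit in the statement) and the semisimple correspondence of Proposition \ref{pps_synth_eq}; this lets the irreducibility argument from the proof of Theorem \ref{thm_kanai_gen} transfer to the present setting.

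Once the generation hypothesis is secured, Proposition \ref{pps_high_regularity} upgrades the stable and unstable distributions of $\varphi_t$ to class $\mathcal{C}^k$ with $k\ge 2(2d^2-d+1)$, and Corollary \ref{cor_high_regularity}, which packages the Benoist--Foulon--Labourie and Besson--Courtois--Gallot theorems, furnishes a closed 1-form $\alpha$ on $N$ and a negatively curved locally symmetric manifold $M^d$ such that the reparametrized flow with generator $X/(1+\alpha(X))$ is smoothly conjugate to the geodesic flow $\phi_t$ on $SM$. After this reparametrization, $(P,\Phi_t^P)$ becomes an isometric principal $G$-extension of $\phi_t$, whose Kanai horizontal distribution retains its $\mathcal{C}^{k+1}$ regularity and is therefore far smoother than any of the Hölder thresholds appearing earlier in the paper.

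The main obstacle, and the step that requires the most care, is the final passage from "locally symmetric" to "real hyperbolic". When $M$ is real hyperbolic, $\delta=1>1/4$ is strict and, in the odd-dimensional regime encoded by the reference to Theorem \ref{thm_conj_Kanai_odd}, one applies that theorem directly to the reparametrized extension: since $F\ne 0$, the flat-bundle alternative is excluded and we land in case (2). To rule out the remaining rank-one symmetric spaces (complex, quaternionic and Cayley hyperbolic), for which the pinching is exactly $1/4$ and the statements of Theorems \ref{thm_conj_Kanai_simple}--\ref{thm_conj_Kanai_odd} do not apply verbatim, I would revisit the equivariance and Ambrose--Singer machinery of Section \ref{sec:curvature} using the specialized holonomy bundle naturally attached to $M$ (Kähler, respectively quaternion-Kähler): the constraints coming from the restricted holonomy of the locally symmetric metric on $M$, combined with $\dim G\ge (d-1)(d-2)/2$ and the nontriviality of $F$, should produce a contradiction and force $F\equiv 0$, collapsing back into case (1). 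This is precisely the step where the absence of an established theory of "adapted" dynamical holonomies in non-real-hyperbolic locally symmetric geometry, as flagged in the introduction, makes the argument delicate.
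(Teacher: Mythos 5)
Your first two stages coincide with the paper's proof: the constant-rank dichotomy on $F$ (Corollary \ref{cor_rank}), case (1) when $F\equiv 0$, and, when $F\neq 0$, the transfer of the argument of Theorem \ref{thm_kanai_gen} to conclude that $F_v$ is injective, $\frk{g}\cong\frk{so}(d)$, $F_v$ is the standard embedding $\frk{so}(d)\subset\End(E_u\vert_v)$, and $\SO(d)\subset H^{E_u}\subset\CO(d)$. Your verification of the generation hypothesis of Proposition \ref{pps_high_regularity} (holonomy-invariant nonzero subbundle spanned by the ranges, then irreducibility of the $\SO(d)$-action) is a harmless variant of the paper's direct observation that every vector of $\R^d$ occurs as a column of a skew-symmetric matrix; after that, Proposition \ref{pps_high_regularity} and Corollary \ref{cor_high_regularity} are invoked exactly as in the paper.

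The genuine gap is in your final step, the passage from ``locally symmetric'' to ``real hyperbolic''. You leave the exclusion of the complex, quaternionic and Cayley hyperbolic models to a hoped-for contradiction obtained by constructing dynamical holonomies adapted to the K\"ahler or quaternion-K\"ahler structure, a theory which (as the introduction itself stresses) does not exist; moreover you aim the contradiction at forcing $F\equiv 0$, which is the wrong target, since at this stage $F\neq 0$ by assumption and what must be excluded is the non-real-hyperbolic model itself. No such machinery is needed, and the worry about the pinching being exactly $1/4$ is moot: the containment $\SO(d)\subset H^{E_u}$ was already established for the original flow $\varphi_t$, using only the $\beta$-bunching hypothesis of the theorem together with ergodicity and the dimension bound --- no curvature pinching of any metric enters --- and it is inherited by the reparametrized/conjugated flow via the explicit identification $\hat{E}_u=\{Y-\alpha(Y)X\}$. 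The paper then concludes in one line: the geodesic flow of a complex (resp.\ quaternionic, octonionic) hyperbolic manifold preserves the complex (resp.\ quaternionic, octonionic) structure on its unstable bundle, so its full dynamical holonomy group lies in the corresponding linear group and cannot contain $\SO(d)$; hence only the real hyperbolic model survives, which is case (2), with the reparametrized extension then covered by Theorem \ref{thm_conj_Kanai_odd}. Supplying this incompatibility argument is what your proposal is missing.
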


\begin{proof}
The case $F = 0$ has already been studied in Section \ref{sec:flat}. We now assume that $F \neq 0$. Arguing exactly as in the proof of Theorem \ref{thm_kanai_gen}, we obtain that $F_v$ is injective at every point $v$, $\frk{g}$ is isomorphic to $\frk{so}(d)$ and $F_v$ can be identified with the standard embedding $\frk{so}(d) \subset \frk{gl}(\R^d) \cong \frk{gl}(E_u)\vert_v$. Moreover, the full holonomy group $H^{E_u}$ satisfies $\SO(d) \subset H \subset \CO(d)$.

We can then apply Corollary \ref{cor_high_regularity}. Indeed, the range of $F_v$ is precisely the subspace $\frk{so}(d) \subset \End(E_u)\vert_v$, and it is easy to check that any vector of $\R^d$ (say, any vector of the canonical basis) may be realized as a column of a matrix in $\frk{so}(d)$. We obtain that $\varphi_t$ is a deformation of a geodesic flow $\phi_t$ on a locally symmetric manifold $M$ by some cohomology class $\alpha$.

More explicitly, the generator $\hat{X}$ of $\hat{\phi}_t$ is given by the formula 
$$\hat{X} =\frac{1}{1 + \alpha(X)} X$$
where $X$ is the generator of $\phi_t$. If we still denote by $g$ the lift of the Riemannian metric $g$ on $M$ to $FM$, then the corresponding reparametrization of $\Phi^P_t$ will preserve the metric $\hat{g}$ on $FM$ given by the formula 
$$\hat{g}(Y, Z) = g(Y + \alpha(Y)X, Z + \alpha(Z)X)$$
where $Y, Z$ are vector fields on $M$. 

Thus, it only remains to study isometric extensions of geodesic flows on locally symmetric manifolds. Among them, only real hyperbolic manifolds admit a full holonomy group $H^{E_u}$ which can contain $\SO(d)$ as a subgroup. Indeed, the geodesic flow on the complex hyperbolic space preserves the complex structure, which prevents the full holonomy group $H^{E_u}$ from containing $\SO(d)$. A similar argument applies to the quaternionic hyperbolic space and octonionic plane.
\end{proof}

\printbibliography

\end{document}